\newtheorem{theorem}{Theorem}
\newtheorem{lemma}[theorem]{Lemma}
\newtheorem{prop}[theorem]{Proposition}
\newtheorem{corollary}[theorem]{Corollary}
\newtheorem{definition}[theorem]{Definition}
\newtheorem{remark}[theorem]{Remark}
\newtheorem{example}[theorem]{Example}
\title{Free probability of type $B$: analytic interpretation and applications}
\author[S.T. Belinschi 
and 
D. Shlyakhtenko]{S.T. Belinschi$^\star$ 
and 
D. Shlyakhtenko$\ddagger$}
\date{
March 15, 2009\newline\indent
$^\star$ University of Saskatchewan and Institute of Mathematics of the Romanian Academy.
106 Wiggins Road, Saskatoon, Saskatchewan S7N 5E6, Canada.
E-mail: belinschi@math.usask.ca.
Supported by a Discovery grant from the Natural Sciences and
Engineering Research Council of Canada and a University of Saskatchewan
start-up grant.
\newline\indent
$^\ddagger$ Department of Mathematics, UCLA, Los Angeles, CA 94720, U.S.A. E-mail: shlyakht@math.ucla.edu. Research supported by NSF grant DMS-0555680}
\begin{document}

\maketitle

\begin{abstract}
In this paper we give an analytic interpretation of free
convolution of type $B$, and provide a new formula for its
computation. This formula allows us to show that free additive 
convolution of type $B$ is essentially a re-casting of conditionally
free convolution. We put in evidence several aspects of this operation,
the most significant being its apparition as an 'intertwiner'
between derivation and free convolution of type $A$. We also
show connections between several limit theorems in type $A$
and type $B$ free probability.  Moreover, we show that the analytical picture 
fits very well with the idea of considering type $B$ random variables as
infinitesimal deformations to ordinary non-commutative random variables.
\end{abstract}

\bigskip

Free probability theory was introduced by D. Voiculescu in the eighties (see e.g. \cite{VDN}).
Already in his early work \cite{DVJFA} Voiculescu has found analytical ways for computation of 
a number of natural operations in his theory, such as free convolution.  Later on, Speicher 
\cite{RS2} found that the combinatoris of free probability theory has to do with ({}``type $A$'') non-crossing partitions.  
For example, he found a description of
the relation between moments and cumulants in terms of the lattice
$NC^{(A)}(n)$ of noncrossing partitions of $\{1,2,\dots,n\}$; free independence could then be
phrased in terms of  vanishing of mixed cumulants.
We refer the reader to 
\cite{NSbook} for a detailed description.

This paper is devoted to the exploration of a new notion of independence,
called type $B$ freeness, which was introduced by Biane, Goodman and
Nica in \cite{BGN}. The original motivation for the introduction of
this notion was the fact that the lattice of non-crossing partitions,
central to (ordinary, or {}``type $A$'') probability theory is naturally
associated to the symmetric groups, which are Weyl groups of Lie groups
of type $A$. If the symmetric group is replaced by the hyperoctohedral
group (the Weyl group of a type $B$ Lie group), one obtains the lattice
$NC^{(B)}(n)$ of type $B$ non-crossing
partitions \cite{reiner}, and thus one seeks a probabilty theory whose
underlying combinatorics is governed by this other lattice. In \cite{BGN},
the authors have shown that type $B$ free probability theory does indeed
exist: one can make sense of a type $B$ law and of a type $B$ non-commutative
random variable. There is a notion of freeness, which has the desired
combinatorial structure. Finally, one has the notions of type $B$ free
convolutions of type $B$ laws, together with an appropriate linearizing 
transform (the $R$-transform of type $B$,) and so on. 
Later in \cite{MVP}, M. Popa
showed
that there is a natural notion of type $B$ semicircular law and a type
$B$ central limit theorem.

Type $B$ freeness can be considered unusual in that there seemed
to be no obvious notion of positivity. For a single random variable
of type $B$, its law can be viewed as being described by a pair of measures
$(\mu,\mu')$. Unfortunately, although it is clear that $\mu$ should
be positive, there was no obvious positivity condition on $\mu'$;
and indeed, the measure $\mu'$ associated to a type $B$ semicircular
variable need not be positive (as remarked in \cite{MVP} for the central
limit and Poisson limit distributions). To find a reasonable
positivity assumption, we chose to introduce a notion of \emph{infinitesimal
law} of a family of random variables, which is a weakening of the
notion of a type $B$ law (this notion is almost implicit in the work
of Biane, Goodman and Nica; indeed, they show that type $B$ probability
is related to freeness with amalgamation over the algebra 
$\mathbb{C}+\mathbb{C}\hbar$ of power series in $\hbar$ taken modulo 
terms of order $\hbar^{2}$ or higher). 

More precisely, there is an infinitesimal law associated to every
family of type $B$ random variables (though some informationis lost
when passing to the infinitesimal law). This weakening, however, is
of no consequence in a single-variable case, and amounts to
interpreting
the pair $(\mu,\mu')$ as the zeroth and first derivative of a family
of laws $\mu_{t}$ (i.e., $\mu=\mu_{0}$ and 
$\mu'(f)=\frac{d}{dt}\Big|_{t=0}\mu_{t}(f)$
for sufficiently nice $f$). One natural notion of positivity is then
to require existence of a family $\mu_{t}$ of \emph{positive} laws
whose derivative is $\mu'$. One can then check that the obvious notion
of infinitesimal freeness (which requires that freeness conditions
are fulfilled to order $o(t)$) are compatible with type $B$ freeness.
In particular, it turns out (Theorem \ref{inf}) that free convolution 
of type $B$ is intimately
related to free convolution of type $A$: 
if $(\eta,\eta')=(\mu,\mu')\boxplus_{B}(\nu,\nu')$
then $\eta=\mu\boxplus\nu$ and $\eta'=\frac{d}{dt}(\mu_{t}\boxplus\nu_{t})$
(usual free convolution), where $\mu_{t}$ and $\nu_{t}$ are any
two families of laws having as their derivatives at $t=0$ $\mu'$
and $\nu'$, respectively. 

Although the notions of infinitesimal law, infinitesimal freeness
add to the proliferation of different notions of non-commutative random
variables, freeness and so on, we feel that these notions are justified
since they simplify our presentation and look rather natural. For
example, the rather mysterious type $B$ semicircular law is nothing
by the infinitesimal law associated to the family of laws of variables
$x+ty$ where $x$ and $y$ are two free semcirculars (Example \ref{24}).

The notion of infinitesimal freeness is in spirit related to the notion of second-order freeness 
introduced by Mingo and Speicher \cite{MS2ndOrder}, but is different from it (our notion is related to a first derivative, and Speicher's is related to a second derivative, 
defined in the case the first derivative vanishes).

A very rich source of infinitesimal laws is given by random matrix
theory. Indeed, if $X_{N}$ is an $N\times N$ random matrix, its
moments typically have an expansion in powers of $1/N$. Keeping the
zeroth and first order terms in $1/N$ then gives rise to an infinitesimal
law. Unfortunately, we were unable to find a direct connection between
ordinary random matrices and type $B$ freeness. The natural guess ---
taking $X_{N}$ to be a \emph{real }Gaussian random matrix and looking
at its law to order $1/N$ as $N\to\infty$ does not produce an infinitesimal
(i.e., type $B$) semicirclar variable. Indeed, as was shown by 
\cite{Johanssen, Edelmann},
the law of an $N\times N$ real random matrix is approximately
$$ \mu_t = \frac{2}{\pi \sqrt{1-t^2}} + t \left( \frac{1}{4}(\delta_1 + \delta_{-1}) - \frac{1}{2\pi \sqrt{1-t^2}} \right),\qquad t = N^{-1}.$$
On the other hand, the type $B$ semicircular law is associated with the following infinitesimal law:
$$\mu_t = \mu_0 + t(\mu_0 - \nu),$$ where $\mu_0$ is the semicircle law and $\nu$ is the arcsine law.  Because of the 
type $B$ free central limit theorem, it is clear that such independent real
Gaussian matrices become in any way asymptotically B-free. 

We show, however, that if we instead start with an $N\times N$ self-adjoint
matrix $X_{N}$ whose entries are semicircular variables, then the
infinitesimal law associated to $X_{N}$ by keeping expressions of
order $0$ and $1$ in $1/N$ \emph{does} converge to a type $B$ semicircular
variable (in fact, the law of $X_{N}$ is semicircular of variance
$(1+1/N)$ for all $N$). Note that $X_{N}$ is {}``symmetric''
in the sense that it is equal to the matrix obtained from $X_{N}$
by transposing all rows and columns, and is thus a free probability
analog of a real Gaussian random matrix (Corollaries 
\ref{cor:lawofXNis} and \ref{cor:infLawXN}.) The matrix $X_{N}$ no 
longer posesses a unitary symmetry, but rather an orthogonal one. It 
would be interesting to investigate if a direct connection to the 
combinatorics of the hyperoctahedral group (related to the orthogonal 
Lie groups) can be made in this way.

One of the main goals of this paper is to give an 
analytic framework for type $B$ free probability. Namely, we show that the operation 
$\boxplus_B$ of free additive  convolution of type $B$ is well defined 
on a product of two spaces,
the first of which is the space of Borel probability measures on $
\mathbb R$, and the second is essentially the space of distributions
on the real line which are derivatives of positive finite measures, not
necessarily probability measures.
(We find in fact three such second coordinate spaces which are stable
under $\boxplus_B$.) 
In this context, it turns out that $\boxplus_B$ is a re-casting
in terms of derivatives of Boolean cumulants of another operation,
$\boxplus_C$, the conditionally free convolution introduced by
Bo\.{z}ejko, Leinert and Speicher in \cite{BLS}. We prove this
result in Theorem \ref{ans}, thus answering an open problem from
\cite{BGN}. Surprisingly, there is no counterpart of this 
correspondence for multiplicative convolutions.

The paper is organized in four sections. In Section 1 we introduce the
main notions and tools used in our proofs. Section 2 establishes the 
connection between infinitesimal freeness and freeness of type $B$, for
both single and multi-variable cases. Section 3 is dedicated to
the description of free additive convolution of type $B$ from
an analytic perspective. In Section 4 we use results form Sections 2
and 3 to describe some limit measures, namely several type $B$ stable 
distributions and the type $B$ Poisson distribution. In Section 4 we
provide a matricial model for type $B$ freeness, and in Section 5
we discuss the operation $\boxtimes_B$ of free multiplicative convolution of type $B$.

\bigskip

\noindent{\bf Acknowledgements}. We are indebted to Teodor Banica and Edouard Maurel-Segala 
for numerous useful discussions.
STB would also like to thank Jacek Szmigielski for 
help in the choice of an appropriate space for the second
coordinate of the free additive convolution of type $B$.

\section{Notations and preliminary results}

The key concept for our paper is presented in the following definition \cite[Definition 6.1]{BGN}:
\begin{definition}
A noncommutative probability space of type $B$ is a system 
$(\mathcal A,\tau,\mathcal V,\varphi,\Phi)$, where
\begin{enumerate}
\item $(\mathcal A,\tau)$ is a type $A$ noncommutative probability space
(i.e. $\mathcal A$ is a unital algebra over $\mathbb C$ and $\tau$ is a
linear functional carrying the unit of the algebra into 1);
\item $\mathcal V$ is a complex vector space and $\varphi\colon\mathcal V\to\mathbb C$ is a linear functional;
\item $\Phi\colon\mathcal A\times\mathcal V\times\mathcal A\to\mathcal V$ is a two-sided action of $\mathcal A$ on $\mathcal V$. We will denote $\Phi(a,\xi,b)$ simply by $a\xi b$, for any $a,b\in\mathcal A,\xi\in\mathcal V$.
\end{enumerate}
\end{definition}
For our purposes, we will need additional structure. Thus, from now on we will assume that $\mathcal A$ is a $C^*$-algebra, $\tau$ is positive, $\mathcal V$ is seminormed, $\varphi$ is continuous, and the action
$\Phi$ is separately continuous.

It was observed in \cite{BGN} that one can define a structure of unital algebra on $\mathcal A\times\mathcal V$ as follows. We represent any 
vector $(a,\xi)\in\mathcal A\times\mathcal V$ as
$\left[\begin{array}{cc}
a & \xi\\
0 & a
\end{array}\right].$ Then 
$$(a_1,\xi_1)\cdot(a_2,\xi_2)=\left[\begin{array}{cc}
a_1 & \xi_1\\
0 & a_1
\end{array}\right]\left[\begin{array}{cc}
a_2 & \xi_2\\
0 & a_2
\end{array}\right]=\left[\begin{array}{cc}
a_1a_2 & a_1\xi_2+\xi_1a_2\\
0 & a_1a_2
\end{array}\right]=(a_1a_2,a_1\xi_2+\xi_1a_2).$$
The unit is simply $(1,0)$, where $1$ is the unit of $\mathcal A$.
As observed in \cite{MVP}, we can view $\mathcal A\times\mathcal V$
as an operator-valued noncommutative 
probability space over the scalar (commutative) algebra

\begin{equation}\label{c}
\mathcal C=\left\{\left[\begin{array}{cc}
z & w\\
0 & z
\end{array}\right]\colon z,w\in\mathbb C\right\}
\subseteq\mathcal M_2(\mathbb C)
\end{equation}
with the conditional expectation $E((a,\xi))=(\tau(a),\varphi(\xi)).$
(As observed in the introduction, this algebra is isomorphic to the
algebra $\mathbb C+\hbar\mathbb C$ of power series taken modulo terms
of order higher than two.)
Since $\mathcal C$ is in the centre of $\mathcal A\times\mathcal V$, the notion of joint moments generalizes in the obvious way to the context of probability space over $\mathcal C$ (see also \cite{RS2}
and \cite{MVP}).
We will next define the non-crossing cumulants of type $A$ and $B$.
\begin{definition}\label{typeAcumul}
The type $A$ free cumulants associated to a noncommutative probability
space $(\mathcal A,\tau)$ are the family of multilinear functionals
$\left(\kappa_n^{(A)}\colon\mathcal A^n\to\mathbb C\right)_{n=1}^\infty$ determined by the equation 
\begin{equation}\label{eq1}
\sum_{\pi\in NC^{(A)}(n)}\prod_{F\in\pi}\kappa_{|F|}^{(A)}\left((a_1,\dots,a_n)|F\right)=\tau(a_1\cdots a_n),
\end{equation}
for all $a_1,\dots,a_n\in\mathcal A$, $n\in\mathbb N$. Here, $F\in\pi$
means {}``$F$ is a block of $\pi$'', $|F|$ denotes the cardinality of the block $F$, and if $F=\{j_1<j_2<\cdots<j_{|F|}\}$, the notation 
$\kappa_{|F|}^{(A)}\left((a_1,\dots,a_n)|F\right)$ means
$\kappa_{|F|}^{(A)}(a_{j_1},a_{j_2},\dots,a_{j_{|F|}}).$
\end{definition}
We observe immediately that for a given $k$-tuple of random variables
${\bf a}=(a_1,a_2,
\dots,a_k)\in\mathcal A^k$, the free cumulants of type $A$ of {\bf a} 
represent simply a family of numbers unique determined by the joint moments of ${\bf a}$ and relation \eqref{eq1}.

Following \cite{BGN}, we define the free cumulants of type $B$ the following way:
\begin{definition}\label{typeBcumul}
Let $(\mathcal A,
\tau,\mathcal V,\varphi,\Phi)$ be a noncommutative probability space of 
type $B$. The type $B$ free cumulants associated to it are the family of multilinear $\mathcal C$-valued functionals $\left(\kappa_n^{(B)}\colon
(\mathcal A\times\mathcal V)^n\to\mathcal C\right)_{n=1}^\infty,$ uniquely determined by the
equation
\begin{equation}\label{eq2}
\sum_{\pi\in NC^{(A)}(n)}\prod_{F\in\pi}\kappa_{|F|}^{(B)}\left((a_1,\xi_1),\dots,(a_n,\xi_n)|F\right)=E((a_1,\xi_1)\cdots(a_n,\xi_n)),
\end{equation}
where the notations correspond to the ones in 
Definition \ref{typeAcumul}.
\end{definition} 
Thus, the free cumulants of type $B$ are defined according to the same equation as the ones of type $A$, but viewed over the algebra $\mathcal C
.$ If one looks at the second coordinate of $\kappa_{n}^{(B)}\left((a_1,\xi_1),\dots,(a_n,\xi_n)
\right)$, one sees that, while the first coordinate equals the type $A$ 
free cumulant, the second is quite different.

The definition of freeness of type $B$ coincides thus with the definition of freeness of type $A$ for the first coordinate (i.e. for the pair $(\mathcal A,\tau)$) and the novelty element is brought by the second coordinate. More precisely,

\begin{definition}\label{freenessB}
Let $(\mathcal A,\tau,\mathcal V,\varphi,\Phi)$ be noncommutative probability space of type $B$. Let $\mathcal A_1,\dots,\mathcal A_k$
be unital subalgebras of $\mathcal A$ and $\mathcal V_1,\dots,\mathcal 
V_k$ be linear subspaces of $\mathcal V$ so that $\mathcal V_j$ is invariant under the action of $\mathcal A_j$, $1\leq j\leq k$. We say
that $(\mathcal A_1,\mathcal V_1),\dots,(\mathcal A_k,\mathcal V_k)$ 
are freely independent in $(\mathcal A,\tau,\mathcal V,\varphi,\Phi)$
if the following happens:
\begin{enumerate}
\item[{\rm (i)}] For any $n\in\mathbb N$, $i_1\neq i_2\neq\cdots\neq i_n\in\{1,\dots,k\}$ if $a_j\in\mathcal A_{i_j}$ satisfy $\tau(a_{j})=0$ for all $1\leq j\leq n$, then $\tau(a_1\cdots a_n)=0$ (i.e. $\mathcal
A_1,\dots,\mathcal A_k$ are free in the type $A$ sense in $(\mathcal A,
\tau)$.)
\item[{\rm(ii)}] The formula
\begin{equation}
\varphi(a_m\cdots a_1\xi b_1\cdots b_n)=\left\{\begin{array}{lrr}
0 & {\rm if\ }m\neq n & \\
\delta_{i_1,j_1}\cdots\delta_{i_n,j_n}\tau(a_1b_1)\cdots\tau(a_nb_n)\varphi(\xi) & {\rm if\ }m=n & 
\end{array}\right.
\end{equation}
holds whenever
\begin{trivlist}
\item[$\bullet$] $m,n\in\mathbb N$, and $i_m,\dots,i_1,h,j_1\dots,j_n\in\{1,\dots,k\}$ are such that any two consecutive indices in the list are different from each other;
\item[$\bullet$] $a_m\in\mathcal A_{i_m},\dots,a_1\in\mathcal A_{i_1},\xi\in\mathcal V_h,b_1\in\mathcal A_{j_1},\dots,b_n\in\mathcal A_{j_n}$ are such that $\tau(a_1)=\cdots=\tau(a_m)=0=\tau(b_1)=\cdots=\tau(b_n).$
\end{trivlist}
\end{enumerate}

\end{definition}
This definition appears as Definition 7.2 in \cite{BGN}. Obviously, two 
type $B$ random variables are free if they live in pairs (algebra, linear space) which are B-free.

\begin{example}\label{AB}
{\rm (We thank Beno\^{i}t Collins for indicating this example to us.)
A simple, and yet very useful, example can be obtained from type $A$
freeness. Consider the type $B$ probability space $(\mathcal A,
\varphi,\mathcal A,\varphi,\Phi)$, where $\Phi$ is the action by
multiplication from the left and by multiplication with elements
from $\mathcal A^{\rm op}$ from the right, and $(\mathcal A,\varphi)$
is simply a type $A$ noncommutative probability space (we will denote
such a space just by $(\mathcal A,
\varphi,\mathcal A,\varphi)$.) It follows
easily from the definition of type $A$ freeness that if $\mathcal A_1,
\dots,\mathcal A_k$ are $A$-free in $(\mathcal A,\varphi)$, then
$(\mathcal A_1,\mathcal A_1),\dots,(\mathcal A_k,\mathcal A_k)$ are 
$B$-free in $(\mathcal A,
\varphi,\mathcal A,\varphi).$
In particular, if $x,y \in\mathcal A$, then $(x,y)\in\mathcal A\times\mathcal A
$ is a type $B$ random variable.  }
\end{example}

For the purpose of studying free convolutions of type $B$, the following two observations are essential. First, in the world of {\em formal} power series with coefficients in $\mathcal C$, the type $B$ $R$-transform of a type $B$ random variable linearizes type $B$ free additive 
convolution: if $(a_1,\xi_1),(a_2,\xi_2)$ are free, then 
$R_{(a_1,\xi_1)}(z)+R_{(a_2,\xi_2)}(z)=R_{(a_1+a_2,\xi_1+\xi_2)}(z)$,
where 
$$R_{(a_j,\xi_j)}(z)=\sum_{n=1}^\infty\kappa_n^{(B)}(\underbrace{(a_j\xi_j),\dots,(a_j,\xi_j)}_{n \ \rm times})z^n.$$
Second, the type $B$ combinatorial structure behaves identically to the 
type $A$ structure viewed over the algebra $\mathcal C$. 
These results appear in \cite{BGN}, as Theorem 7.3
and Proposition 6.5.

An immediate consequence of these results is that all formal power
series operations and properties used in type $A$ free probability have 
a natural counterpart with the same properties in type $B$ free
probability when replacing complex numbers with elements from $\mathcal 
C$ as coefficients (more details below). This fact has been used 
massively in \cite{BGN}
in order to describe, among others, type $B$ distributions of sums (see 
above) and products of of $B$-free random variables. Of course, these
correspond to certain operations, defined on the space of type $B$ 
distributions, which we will call free additive (respectively 
multiplicative) convolution of type $B$, and denote them $\boxplus_B$
and $\boxtimes_B$, respectively. In order to better 
capture the analytic structure of type $B$ convolutions, it is 
helpful  
to view 
the operations on formal power series corresponding to $\boxplus_B$
and $\boxtimes_B$ as operations on analytic maps from $\mathcal C$ to
itself. 
The following setup turns out to be the appropriate 
one:
Define 
$$Z=\left[\begin{array}{cc}
z & w\\
0 & z
\end{array}\right]\in\mathcal C.$$
We immediately observe that 
$$\left[\begin{array}{cc}
z & w\\
0 & z
\end{array}\right]\left[\begin{array}{cc}
z' & w'\\
0 & z'
\end{array}\right]=\left[\begin{array}{cc}
zz' & zw'+wz'\\
0 & zz'
\end{array}\right] ,\quad z,z'w,w'\in\mathbb C,$$
and thus
$$Z^n=\left[\begin{array}{cc}
z^n & nz^{n-1}w\\
0 & z^n
\end{array}\right]\quad\forall n\in\mathbb N,
\quad Z^{-1}=\left[\begin{array}{cc}
z^{-1} & -wz^{-2}\\
0 & z^{-1}
\end{array}\right].$$

For any formal power series in one variable $\tilde f(z)=\sum_{n=0}^\infty A_n z^n$, with
coefficients $A_n=\left[\begin{array}{cc}
a_n & b_n\\
0 & a_n
\end{array}\right]\in \mathcal C$, as defined in
\cite[Section 5.2]{BGN}, we 
define the function 

$$f(Z)=\sum_{n=0}^\infty A_n Z^n,\quad f\colon \Omega\subseteq\mathcal C\to\mathcal C,$$
where $\Omega$ is some open set in $\mathcal C$.
Such an $f$ typically needs not make sense for 
any $Z\in\mathcal C$, so we must analyze carefully the two components 
of this complex map. We have
$f=(f_1,f_2)=\left[\begin{array}{cc}
f_1 & f_2\\
0 & f_1
\end{array}\right]$, where $$f_1(z,w)=f_1(z)=\sum_{n=0}^\infty a_n z^n, \quad
{\rm and }\quad
f_2(z,w)=\sum_{n=0}^\infty na_n z^{n-1}w+\sum_{n=0}^\infty b_n z^n=
wf_1'(z)+g(z).$$
We observe immediately that when we compare this to the formal power
series, we obtain 
$$\tilde f(z)=\left(\sum_{n=0}^\infty a_nz^n,\sum_{n=0}^\infty b_nz^n\right)=(\tilde f_1(z),\tilde g(z)).$$
It is to be noted that only the first coordinate of $Z$ is restricted
by possible issues related to the radius of convergence of $f_1,g$. The
natural power series operations obey this correspondence. The main
advantage of using maps of $\mathcal C$ instead of formal power series
is that it allows a notion of inverse map with respect to 
composition.
Indeed, although the variable $w$ seems to play a minor, rather inconvenient, role, in many circumstances it will allow us to define
the {\em inverse with respect to composition} of a function $f$, noted $f^{-1}$,
by the obvious condition $f\circ f^{-1}(Z)=Z$ and $f^{-1}\circ f(Z)=Z$. We immediately 
observe that for $f$ to be invertible we need $f_1$ as a one-variable function to be 
invertible and then, if $z=f^{-1}_1(\zeta)$ and $\Xi=(\zeta,v),$ we get 
$$f(Z)=\Xi\iff f^{-1}(\Xi)=Z\iff z=f_1^{-1}(\zeta) {\rm\ \  and\ \ } w=\frac{v-g(f_1^{-1}(\zeta)
)}{f_1'(f_1^{-1}(\zeta))}.$$ Thus, whenever the one-variable complex function $f_1$ 
is well defined and (locally) invertible, and $g$ is well defined, we can define the 
composition inverse $f^{-1}$ of $f$.

Now let us observe that the formal power series $\tilde f$
is uniquely determined by the function $f$ whenever $f$ 
has a nonzero radius of convergence (meaning $f_1 $ and $g$ have, as one-variable
complex analytic functions, non-zero radius of convergence each). Indeed, knowing
$f$ on any open set in $\mathcal C$ means knowing in particular $f_1$ and $g$
in some open set in $\mathbb C$, and hence knowing the coefficients $a_n,b_n$
for all $n\in\mathbb N$, i.e. knowing all $A_n$. Moreover, the set of such functions
forms an algebra which is 
commutative and embeds in the algebra of formal power series with coefficients in $\mathcal C$. (Observe that the embedding is given by
$(f_1,f_2)\mapsto(f_1(z),f_2(z,0))$.)
The unit is the constant function $e(z,w)=(1,0)$. 

We now use Theorem 16.15 in \cite{NSbook}:

\begin{theorem}
For $f,g$ formal power series in the $s$ non-commuting variables $z_1,
\dots,z_s$, without free term, with coefficients in a commutative algebra, the following two conditions are equivalent:
\begin{equation}
{\rm Cf}_{(i_1,\dots,i_n)}(g)=\sum_{\pi\in NC^{(A)}(n)}{\rm Cf}_{(i_1\dots,i_n);\pi}(f)\quad\forall
n\ge1,1\le i_1,\dots,i_n\le s.
\end{equation}
\begin{equation}
g=f(z_1(1+g),\dots,z_s(1+g)).
\end{equation}
\end{theorem}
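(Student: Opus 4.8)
The plan is to establish the equivalence between the two displayed conditions by recognizing this as the standard moment-cumulant/functional-equation duality in the multivariate setting, where the commutative coefficient algebra plays no essential role beyond guaranteeing that the formal manipulations make sense. I would first fix notation: write $g=\sum_{n\ge 1}\sum_{i_1,\dots,i_n} \mathrm{Cf}_{(i_1,\dots,i_n)}(g)\, z_{i_1}\cdots z_{i_n}$ and similarly for $f$, and interpret $\mathrm{Cf}_{(i_1,\dots,i_n);\pi}(f)$ as the product over the blocks $F\in\pi$ of the coefficients $\mathrm{Cf}_{(\dots)|F}(f)$ indexed by the subsequence of colors selected by $F$. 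Since the coefficients lie in a \emph{commutative} algebra, these block-products are well defined independently of any ordering of the blocks, which is exactly what allows the non-crossing partition sum to behave as in the scalar case.

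Next I would carry out the implication $(4.2)\Rightarrow(4.1)$ by expanding the right-hand side of the functional equation $g=f(z_1(1+g),\dots,z_s(1+g))$ and reading off the coefficient of a fixed word $z_{i_1}\cdots z_{i_n}$. Substituting $z_j\mapsto z_j(1+g)$ into a degree-$k$ monomial of $f$ inserts a factor $(1+g)$ after each of the $k$ variables; expanding each such factor and re-expanding $g$ recursively produces, for the coefficient of $z_{i_1}\cdots z_{i_n}$, precisely a sum over the ways of choosing an "outer" block of size $k$ (contributing a coefficient of $f$) together with nested sub-words filled in by $g$ in the gaps. The combinatorial content of this recursion is exactly the recursive decomposition of a non-crossing partition into its outer block and the non-crossing partitions of the intervals it cuts out; iterating gives the full sum $\sum_{\pi\in NC^{(A)}(n)}\mathrm{Cf}_{(i_1,\dots,i_n);\pi}(f)$. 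This is the heart of the argument and is where I would be most careful.

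For the converse $(4.1)\Rightarrow(4.2)$ I would argue that (4.1) determines $g$ uniquely degree by degree from $f$: the coefficient of $z_{i_1}\cdots z_{i_n}$ in $g$ equals $\mathrm{Cf}_{(i_1,\dots,i_n)}(f)$ plus a polynomial in strictly lower-degree coefficients of $g$ (coming from the non-full partitions $\pi\neq 1_n$), so (4.1) has a unique solution $g$ by induction on $n$. Since the already-proved direction shows that the series defined by the functional equation (4.2) satisfies (4.1), and since (4.1) pins down $g$ uniquely, the two solutions coincide, giving (4.2).

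The main obstacle I anticipate is the bookkeeping in the forward direction: making precise how the substitution $z_j\mapsto z_j(1+g)$ and the recursive reinsertion of $g$ reproduce the outer-block decomposition of $NC^{(A)}(n)$, and verifying that commutativity of the coefficient algebra is exactly what is needed so that the block coefficients of $f$ multiply in an order-independent way matching $\mathrm{Cf}_{(i_1,\dots,i_n);\pi}(f)$. Since this is quoted as Theorem 16.15 of \cite{NSbook}, I would in practice simply cite that reference rather than reproduce the full combinatorial induction, noting only that its proof goes through verbatim with a commutative coefficient algebra in place of $\mathbb{C}$.
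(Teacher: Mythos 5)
The paper offers no proof of this statement at all: it is imported verbatim as Theorem 16.15 of \cite{NSbook} and used as a black box, so your plan to ultimately cite that reference is exactly what the paper does, and your sketch of the underlying argument (the outer-block recursion of $NC^{(A)}(n)$ for the implication from the functional equation to the coefficient identity, with commutativity ensuring the block products are order-independent, and degree-by-degree uniqueness for the converse) is the standard and correct one. One minor slip: in the converse direction the contributions of the partitions $\pi\neq 1_n$ are products of \emph{lower-degree coefficients of $f$}, not of $g$; this is harmless, since on either reading the coefficient identity determines $g$ uniquely from $f$ and the uniqueness argument closes.
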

(Here ${\rm Cf}_{(i_1\dots,i_n);\pi}(f)$ denotes the product of coefficients of $f$
indexed by the blocks of the partition $\pi$, where the coefficient corresponding
to the block $\beta=\{j(1),\dots,j(r)\}$ is ${\rm Cf}_{(i_{j(1)}\dots,i_{j(r)})}(f)$, the 
coefficient of $z_{i_{j(1)}}\cdots z_{i_{j(r)}}$ in the expression of $f$.) We apply this theorem in the case $s=1$ to formal power series with coefficients in $\mathcal C$ to obtain

\begin{corollary}\label{functionalequation}
The complex functions of $Z=\left[\begin{array}{cc}
z & w\\
0 & z
\end{array}\right]$ defined by $M_{(a,\xi)}(Z)=\sum_{n=1}^\infty
E((a,\xi)^n)Z^n$ and $R_{(a,\xi)}(Z)=\sum_{n=1}^\infty \kappa_n^{(B)}(\underbrace{(a,\xi),\dots,(a,\xi)}_n)Z^n$ satisfy the usual moment-cumulant functional equation 
$$M_{(a,\xi)}(Z)=R_{(a,\xi)}(Z({\bf 1}+M_{(a,\xi)}(Z))),$$
where ${\bf 1}=(1,0)=\left[\begin{array}{cc}
1 & 0\\
0 & 1
\end{array}\right]$. Moreover, the functions $M_{(a,\xi)}$ and $R_{(a,\xi)}$ determine
each other uniquely via the above functional equation and the condition that their
degree zero term is zero.
\end{corollary}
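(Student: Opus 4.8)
The plan is to deduce Corollary \ref{functionalequation} from the cited Theorem (Theorem 16.15 of \cite{NSbook}) by a direct translation: identify the moment and cumulant power series of the type $B$ variable $(a,\xi)$ as the series $g$ and $f$ appearing in that theorem, verify that the hypotheses hold, and then read off the functional equation. Concretely, since we work in the one-variable case $s=1$, the coefficient notation ${\rm Cf}_{(i_1,\dots,i_n)}$ collapses to the single coefficient of $Z^n$, and the defining relation \eqref{eq2} for type $B$ cumulants (after setting all arguments equal to $(a,\xi)$) is \emph{exactly} the statement that ${\rm Cf}_n(M_{(a,\xi)}) = \sum_{\pi\in NC^{(A)}(n)} {\rm Cf}_{n;\pi}(R_{(a,\xi)})$, because the product over blocks $\prod_{F\in\pi}\kappa_{|F|}^{(B)}$ is precisely the product ${\rm Cf}_{n;\pi}$ of cumulant coefficients indexed by the blocks of $\pi$. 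This is condition (8) of the cited theorem with $g = M_{(a,\xi)}$ and $f = R_{(a,\xi)}$.

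First I would check that the two series have no constant term, as required by the theorem: both $M_{(a,\xi)}$ and $R_{(a,\xi)}$ are defined with the summation starting at $n=1$, so each has zero free term, and we may legitimately substitute them into the theorem. I would also emphasize that the coefficients live in the commutative algebra $\mathcal C$, so the hypothesis on the coefficient ring is met; this is the content of the earlier observation (citing Proposition 6.5 of \cite{BGN}) that the type $B$ combinatorial structure behaves identically to the type $A$ structure viewed over $\mathcal C$. With (8) verified, the equivalent condition (9) of the theorem reads $g = f(z(1+g))$, which in our notation and evaluated on $Z\in\mathcal C$ becomes $M_{(a,\xi)}(Z) = R_{(a,\xi)}\bigl(Z(\mathbf 1 + M_{(a,\xi)}(Z))\bigr)$, using that multiplication in $\mathcal C$ is commutative and that $\mathbf 1 = (1,0)$ is the unit. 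This is exactly the asserted functional equation.

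For the final assertion, that $M_{(a,\xi)}$ and $R_{(a,\xi)}$ determine each other uniquely via the equation together with the vanishing of the degree-zero term, I would argue by recursion on the coefficients. Writing $Z(\mathbf 1 + M_{(a,\xi)}(Z)) = Z + (\text{higher order in } Z)$, one sees that the coefficient of $Z^n$ on the right-hand side of the functional equation involves $\kappa_n^{(B)}$ linearly together with products of lower-order cumulants and moments; symmetrically, the $n$-th moment is a polynomial in $\kappa_1^{(B)},\dots,\kappa_n^{(B)}$ in which $\kappa_n^{(B)}$ appears with coefficient $\mathbf 1$. Hence given one family the other is determined coefficient by coefficient, and the base case is handled by the assumption that the degree-zero term vanishes.

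The main obstacle is purely bookkeeping rather than conceptual: one must be careful that substituting the $\mathcal C$-valued series into condition (9) of the cited theorem really does reproduce the composition $R_{(a,\xi)}\bigl(Z(\mathbf 1 + M_{(a,\xi)}(Z))\bigr)$ as a genuine composition of the \emph{functions} $f = (f_1,f_2)$ on $\mathcal C$, and not merely a formal manipulation of the underlying power series in the scalar variable $z$. This requires invoking the earlier remark that the algebra of such $\mathcal C$-valued analytic maps embeds in the algebra of formal power series with coefficients in $\mathcal C$, so that the formal identity supplied by Theorem 16.15 transports verbatim to the level of functions on $\mathcal C$; once that identification is made explicit, no further work is needed.
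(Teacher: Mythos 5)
Your proposal is correct and follows exactly the route the paper takes: the corollary is obtained by specializing Theorem 16.15 of \cite{NSbook} to $s=1$ with coefficients in the commutative algebra $\mathcal C$, identifying condition (8) with the defining relation \eqref{eq2} for the type $B$ cumulants and reading the functional equation off condition (9). Your added details (checking the vanishing free terms, the coefficient-by-coefficient recursion for uniqueness, and the embedding of $\mathcal C$-valued analytic maps into formal power series) are all consistent with the paper's setup and merely make explicit what the paper leaves implicit.
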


\begin{remark}\label{cauchy}
{\rm It will be more convenient for us to work with the appropriate generalization of the Cauchy transform:
$$G_{(a,\xi)}(Z)=\sum_{n=0}^\infty A_nZ^{-n-1}=\left(\sum_{n=0}^\infty \frac{a_n}{z^{n+1}},w\sum_{n=0}^\infty\frac{-(n+1)a_n}{z^{n+2}}+\sum_{n=0}^\infty\frac{b_n}{z^{n+1}}\right),$$
where $A_0=(1,0)$, $a_j=\tau(a^j),$ $b_j=\sum_{k=1}^j\varphi(a^{k-1}\xi
a^{j-k}).$ We will denote the first component by $G_a(z)$ and the function $\sum_{n=0}^\infty\frac{b_n}{z^{n+1}}$ by $g_\xi(z)$ (or, when they will be viewed as corresponding to a distribution pair $(\mu,\nu)
$ having the corresponding moments $(a_n,b_n)$, by $G_\mu(z)$ and 
$g_\nu(z)$).}
\end{remark}

It will be seen later that, while for the first coordinate of a
type $B$ distribution the appropriate object is indeed a Borel
probability measure on the real line, there are several appropriate
possible choices for the second. The definition below will provide
the largest convenient framework, from which we will particularize
as needed.

\begin{definition}\label{mare}
Denote $\mathcal M$ the set of all Borel probability measures on 
$\mathbb R$ and $\mathcal M_0$ the set of linear functionals $\nu$ 
defined on the complex vector space generated by
the functions $\mathbb R\ni t\mapsto(z-t)^{-n}$, $n\ge0$, $\Im z\neq0$,
satisfying the following conditions:
(i) $\nu(1)=0,$ (ii) the correspondence $z\mapsto\nu((z-t)^{-n})$ is 
analytic and satisfies (a) $\nu((\bar{z}-t)^{-n})=\overline{\nu((z-t)^{-n})
},$ and (b) $\lim_{y\to+\infty}(iy)^n\nu((iy-t)^{-n})=0$. We will define the support of $\nu$ to be the complement of the
(open) subset of $\mathbb C\cup\{\infty\}$ on which 
$z\mapsto\nu((z-t)^{-n})$ has a
unique analytical extension satisfying (a). 
\end{definition}
(As an example, Schwarz distributions 
with compact support on $\mathbb R$ satisfying condition (i),
satisfy also (ii)
.) Also, denote $\mathbb C^+
=\{z\in\mathbb C\colon\Im z>0\}$, the upper half-plane of the complex plane.

There are several justifications for defining $\mathcal M_0$ this way,
in terms of the functions $t\mapsto(t-z)^{-n}$ (beyond 
the obvious reason that {}``it works''.) As the reader might 
recall from the introduction, one expects the second coordinate of
a type $B$ distribution to be in a certain sense a derivative of a 
probability measure. The theory of distributions allows one to extend 
the notion of derivative to more general objects than differentiable
functions, roughly by using integration by parts. As a relevant 
example, for a compactly supported finite measure $\nu$ on $\mathbb R$, 
if $f$ is a smooth enough function, then $\int f'\,d\nu=-\int f\,d\nu'.
$ This kind of derivative is known as {\em distributional derivative. }
(The reader can find more information about the origins of this very
rich subject in \cite{schwartz}.) On the other hand, possibly up to a sign, the derivative of 
$(t-z)^{-n}$ with respect to $z$ coincides with the derivative with 
respect to $t$. Thus, {}``philosophically'' one can view in the above 
definition the requirement that $\nu$ is defined on all functions 
$t\mapsto(t-z)^{-n}$ together with (b), (ii) as knowing (and allowing) 
the derivatives of all orders of $\nu$, condition (a), (ii) as 
requiring that $\nu$ is in a certain sense real, and condition
(i) as conveniently generalizing the demand that $\nu$ is a derivative
of a probability on $\mathbb R$. Requiring that $\nu$ is defined on
functions $t\mapsto(t-z)^{-n}$ for all $z\in\mathbb C\setminus\mathbb R
$ corresponds to requiring that $\nu$ be supported on the real line. 
When $\nu$ is the distributional derivative of a Borel probability
measure on $\mathbb R$, the above statements are precisely true.

Also, finite measures $\nu$ on the real line are fully recoverable
from the Cauchy transform $z\mapsto\int(t-z)^{-1}\,d\nu(t),$ $\Im z
\neq0$, in terms of the nontangential limit of this function at points 
of the real line. This fact has been
used with considerable success before in free probability, for ex.
in \cite{DVJFA,BVIUMJ,fe1,MZ} etc. (For a very rich and detailed 
discussion of this problem, we refer the reader to the classical text 
of Akhieser \cite{akhieser}.) However, it is not only finite measures 
on $\mathbb R$ that are described by their actions on functions
$t\mapsto(t-z)^{-1}$. While we do not plan to pursue the subject
here, we will mention that linear functionals defined on spaces
of functions with $L^p$ derivatives can also be recovered from
the nontangential limits at points of $\mathbb R$ of their
evaluation on $t\mapsto(t-z)^{-1}$, as shown in \cite{Luszczki}.

\begin{remark}\label{Ccompact}
{\rm
Let us observe that, when we consider distributions which are compactly 
supported in $\mathbb R$, these objects will be defined, and completely 
described by their action, on monomials $t^n,$ $n\in\mathbb N.$ 
Indeed, this fact is well-known for measures (see, for example, 
\cite[Theorem 2.6.4]{akhieser}). For objects $\nu\in\mathcal M_0$, we 
will show first that one can define the values $\nu(t^n)$, and then 
that these values determine uniquely the functions $z\mapsto
\nu((z-t)^{-n})$. Indeed, since $(t-z)^{-1}=
\sum_{n=0}^\infty t^nz^{-n-1}$ for
$|z|>|t|$, to find $\nu(t)$ one can, using (i) and the standard
methods provided in \cite[Chapter 3]{akhieser}, start by formally 
writing $$\nu(t)=\lim_{z\to\infty}z^2\nu((z-t)^{-1}),\quad
\nu(t^n)=\lim_{z\to\infty}z^{n+1}\left[\nu((z-t)^{-1})-
\sum_{j=0}^{n-1}\nu(t^j)z^{-j-1}\right].$$ Now recalling
the hypothesis of compact support for $\nu$, it follows that 
$z\mapsto\nu((z-t)^{-1})$ is analytic on a neighbourhood of infinity.
From this hypothesis, together with the linearity of $\nu$, it follows 
that all the numbers $\nu(t^n),$ $n\ge0$, are well-defined, and
moreover, they uniquely specify $\nu((z-t)^{-n})$ for all $n\ge0$.
Thus, provided it converges 
on some neighbourhood of infinity, the series $g_\nu(z)= \sum_{n=0}^\infty
\nu(t^n)z^{-n-1}$ uniquely determines $\nu$. 
}
\end{remark}

\bigskip
Next we shall recall some useful facts related to Cauchy transforms of free 
additive convolutions of probability measures on the real line. These 
results have been proved first by Voiculescu in \cite{fe1} under some 
mild restrictions, and later in full generality by Biane in 
\cite{mathz}.
\begin{theorem}\label{subord}
For any measure $\lambda$, let $G_\lambda$ be its Cauchy transform and 
$F_\lambda=\frac{1}{G_\lambda}.$ Assume $\mu_1,\mu_2\in\mathcal M$ and denote $\mu_3=\mu_1\boxplus\mu_2$.
Then there exist two analytic maps $\omega_1,\omega_2\colon\mathbb C^+
\to\mathbb C^+$, uniquely determined by the following properties:
\begin{enumerate}
\item[{\rm(1)}] $G_{\mu_1}(\omega_1(z))=G_{\mu_2}(\omega_2(z))=G_{\mu_3
}(z),$ $z\in\mathbb C^+$;
\item[{\rm(2)}] $\omega_1(z)+\omega_2(z)=z+F_{\mu_3}(z),$ $z\in
\mathbb C^+$;
\item[{\rm(3)}] If $\mu_1,\mu_2$ have compact support, then $\omega_j$ are analytic on some neighbourhood of infinity, $j\in\{1,2\}$;
\item[{\rm(4)}] $\lim_{y\to+\infty}\omega_j(iy)/iy=\lim_{y\to+\infty}
\omega_j'(iy)=1$, $j\in\{1,2\}$.
\end{enumerate}
The above relations determine uniquely the free additive convolution of 
$\mu_1$ and $\mu_2$.
\end{theorem}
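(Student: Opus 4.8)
The plan is to work throughout with the reciprocal Cauchy transforms $F_{\mu_j}=1/G_{\mu_j}$, which are analytic self-maps of $\mathbb{C}^+$, and to recast the two required identities (1)--(2) as a single fixed-point equation. First I would record the two standard facts about $F_\mu$ that do all the analytic work: from the Nevanlinna representation $F_\mu(z)=a+z+\int\frac{1+tz}{t-z}\,d\sigma(t)$ (with $a\in\mathbb{R}$, $\sigma\ge0$ finite) one computes $\Im(F_\mu(z)-z)\ge0$ on $\mathbb{C}^+$, so $H_\mu:=F_\mu-\mathrm{id}$ maps $\mathbb{C}^+$ into $\overline{\mathbb{C}^+}$; and $F_\mu(z)/z\to1$ nontangentially, whence $H_\mu(z)/z\to0$. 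Conditions (1) and (2) say exactly $F_{\mu_1}(\omega_1)=F_{\mu_2}(\omega_2)=F_{\mu_3}$ and $\omega_1+\omega_2=z+F_{\mu_3}$; eliminating $\omega_2=z+H_{\mu_1}(\omega_1)$ and $F_{\mu_3}$ leaves the single equation $\omega_1=\Psi_z(\omega_1)$, where $\Psi_z(w):=z+H_{\mu_2}\big(z+H_{\mu_1}(w)\big)$. Since $H_{\mu_j}$ takes values in $\overline{\mathbb{C}^+}$, the map $\Psi_z$ is a well-defined analytic self-map of $\mathbb{C}^+$ obeying the crucial a priori bound $\Im\Psi_z(w)\ge\Im z$.

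Next I would solve this fixed-point equation for each fixed $z$ via the Denjoy--Wolff theorem. The bound $\Im\Psi_z(w)\ge\Im z$ shows $\Psi_z(\mathbb{C}^+)$ misses the strip $\{0<\Im w<\Im z\}$, so $\Psi_z$ is not surjective and hence not an automorphism of $\mathbb{C}^+$; Denjoy--Wolff then gives a point $\tau\in\overline{\mathbb{C}^+}\cup\{\infty\}$ to which all iterates of $\Psi_z$ converge locally uniformly. If $\tau$ is finite it automatically satisfies $\Im\tau\ge\Im z>0$, so it is an interior fixed point, which I would take as $\omega_1(z)$; analyticity of $z\mapsto\omega_1(z)$ then follows since $|\Psi_z'(\omega_1)|<1$ (Schwarz--Pick, as $\Psi_z$ is no automorphism), making $\omega_1$ a simple zero of $w\mapsto w-\Psi_z(w)$ depending analytically on $z$ through the implicit function theorem. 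I expect the one genuine obstacle to be excluding the boundary case $\tau=\infty$: here I would invoke the sublinear decay $H_{\mu_j}(w)/w\to0$, which yields $\Psi_z(w)/w\to0$ nontangentially, contradicting the Julia--Wolff--Carath\'eodory inequality (angular derivative $\ge1$) that would have to hold if $\infty$ were an attracting Denjoy--Wolff point. This is exactly where the measure-theoretic input (Nevanlinna decay, not mere positivity) is indispensable.

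Having produced $\omega_1$ and $\omega_2=z+H_{\mu_1}(\omega_1)$, I would read off the remaining assertions. For compactly supported $\mu_j$, both $F_{\mu_j}$ and their compositional inverses are analytic near $\infty$ with $F_{\mu_j}(z)=z+O(1/z)$, so $\Psi_z$ and its fixed point extend analytically to a neighbourhood of $\infty$, giving (3) and, by differentiating the expansion, the normalizations $\omega_j(iy)/iy\to1$ and $\omega_j'(iy)\to1$ of (4); for general $\mu_j$ the same limits come from the Nevanlinna representation of $\omega_j$, itself an analytic self-map of $\mathbb{C}^+$ of the correct class. Uniqueness among maps satisfying (1)--(4) is then immediate: any such pair solves the fixed-point equation, a non-elliptic analytic self-map of $\mathbb{C}^+$ has at most one interior fixed point, and condition (4) selects the branch normalized at $\infty$.

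Finally, to see that these relations \emph{determine} $\mu_1\boxplus\mu_2$, I would set $G:=1/(F_{\mu_1}\circ\omega_1)$ and note that $F_{\mu_1}\circ\omega_1$ is an analytic self-map of $\mathbb{C}^+$ with $F(z)/z\to1$, so by Nevanlinna's theorem $G$ is the Cauchy transform of a genuine probability measure $\mu_3$. To identify $\mu_3$ with the free convolution I would use $F_{\mu_j}^{-1}(w)=R_{\mu_j}(1/w)+w$, relating the inverse of $F_{\mu_j}$ to the $R$-transform: evaluating (1)--(2) at $w=F_{\mu_3}(z)$ converts (2) into $R_{\mu_1}+R_{\mu_2}=R_{\mu_3}$, which is precisely the defining property of $\boxplus$. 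For compactly supported measures this is an honest identity between functions analytic at $\infty$; the general case follows by truncating the measures and passing to the limit, using weak continuity of $\boxplus$. Hence (1)--(4) pin down $\mu_1\boxplus\mu_2$ uniquely.
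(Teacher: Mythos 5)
You should first note that the paper does not prove this theorem at all: it is quoted as known, with the compactly supported (and mildly restricted) case attributed to Voiculescu \cite{fe1} and the general case to Biane \cite{mathz}, whose argument is operator-theoretic (subordination is extracted from a conditional expectation onto the algebra generated by one summand). Your proposal is therefore necessarily a different route; it is essentially the later fixed-point proof of Belinschi--Bercovici and Chistyakov--G\"otze. The architecture is sound: the reduction of (1)--(2) to $\omega_1=\Psi_z(\omega_1)$ with $\Psi_z(w)=z+H_{\mu_2}(z+H_{\mu_1}(w))$, the a priori bound $\Im\Psi_z(w)\ge\Im z$, the observation that $\Psi_z$ is never an automorphism (hence has at most one fixed point, giving uniqueness of the pair $(\omega_1,\omega_2)$ from (1)--(2) alone), and the identification of $\mu_3$ with $\mu_1\boxplus\mu_2$ via $F^{-1}_{\mu_j}(w)=w+R_{\mu_j}(1/w)$ are all correct and buy a proof that needs no operator model and works uniformly for all Borel probability measures.

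The genuine gap is the exclusion of the Denjoy--Wolff point at infinity. You argue that $H_{\mu_j}(w)/w\to0$ nontangentially forces $\Psi_z(w)/w\to0$ nontangentially. This does not follow for measures of unbounded support: the inner point $u=z+H_{\mu_1}(w)$ satisfies $\Im u\ge\Im z$ and $|u|=o(|w|)$, but it can escape to infinity along an essentially \emph{horizontal} path (imaginary part bounded, real part unbounded), and on such paths $H_{\mu_2}$ need not be sublinear --- one can build finite Nevanlinna measures for which $|H_{\mu_2}(x+i\varepsilon)|$ grows almost quadratically in $x$ along a sequence. Since the Julia--Carath\'eodory obstruction only requires $\liminf\Im\Psi_z(w)/\Im w\ge1$, you must exhibit \emph{some} sequence $w_n\to\infty$ with $\Im\Psi_z(w_n)/\Im w_n\to c<1$, and the crude bounds $|u|=o(|w|)$, $\Im H_{\mu_2}(u)\lesssim(1+|u|^2)/\Im u$ only give $O(\Im w)$, not $o(\Im w)$. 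Standard repairs: (a) prove the fixed point exists directly for $\Im z$ large (where $F_{\mu_j}$ are invertible on truncated cones, as in \cite{BVIUMJ}) and then show the set of $z$ admitting an interior fixed point is open and closed in $\mathbb C^+$ (openness from $|\Psi_z'|<1$ at the fixed point plus the implicit function theorem, closedness from the bound $\Im\omega_1\ge\Im z$ and a normal-families argument); or (b) approximate by compactly supported measures, for which your nontangential argument is honest, and pass to the limit using the uniqueness you have already established. Apart from this point, and modulo routine verification of (3)--(4) from the Nevanlinna representation of $\omega_j-\mathrm{id}$, the proposal is a correct and self-contained alternative to the proofs the paper cites.
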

Thus, $G_{\mu_3}$ is subordinated to $G_{\mu_1}$ and $G_{\mu_2}$ in
the sense of Littlewood (see \cite[Section 1.5]{Duren}).

\section{type $B$ free probability vs infinitesimal free probability 
theory.}

\subsection{Infinitesimal laws of non-commutative random variables.}

Let $X_{t}^{(j)}$ be a family of non-commutative random variables
in a non-commutative probability space $(\mathcal A,\tau)$. Here $j=1,\ldots,n$
and $t$ is a parameter lying in the set $K\subset\mathbb{R}$ having
zero as an accumulation point. We now define an object that captures
the behavior of the moments of the family $\{X_{t}^{(j)}:t\in K\}$
as $t\to0$ to order $t$ (ignoring orders higher than $t$). 

\begin{definition}
An \emph{infinitesimal law} of $n$ variables is a pair of linear
functionals $\mu,\mu':\mathbb{C}\langle t_{1},\ldots,t_{n}\rangle\to\mathbb{C}$
defined on the algebra of non-commutative polynomials in $n$ indeterminates.
We require that $\mu(1)=1$ and $\mu'(1)=0$.
\end{definition}
The intuitive idea is that if we have a family $X_{t}^{(j)}$ as above,
we would set, for $p\in\mathbb{C}\langle t_{1},\dots,t_{n}\rangle$\begin{eqnarray*}
\mu(p(t_{1},\ldots,t_{n})) & = & \lim_{t\to0}\tau(p(X_{t}^{(1)},\ldots,X_{t}^{(n)})),\\
\mu'(p(t_{1},\ldots,t_{n})) & = & \lim_{t\to0}\frac{1}{t}\left[\tau(p(X_{t}^{(1)},\ldots,X_{t}^{(n)}))-\mu(p(t_{1},\ldots,t_{n}))\right].\end{eqnarray*}

These could also be written as\begin{eqnarray*}
\mu & = & \lim_{t\to0}\mu_{t}\\
\mu' & = & \lim_{t\to0}\frac{1}{t}(\mu_{t}-\mu),\end{eqnarray*}
where $\mu_{t}$ denotes the law of the $n$-tuple $(X_{t}^{(j)}:j=1,\dots,n)$.

Given $\mu,\mu'$, one can define a natural one-parameter family of
laws having the infinitesimal law $(\mu,\mu')$, namely, $\mu_{t}=\mu+t\mu'$
(note that this is a family of laws since $\mu'(1)=0$ and thus $\mu_{t}(1)=1$
for all $t$).

\subsection{Freeness for infinitesimal laws.}

There is also an obvious notion of freeness. Namely, we say that two
families $X_{t}^{(j,1)}$, $j=1,\dots,n_{1}$ and $X_{t}^{(j,2)}$,
$j=1,\dots,n_{2}$ are free (to order $t$) if one has, for arbitrary
polynomials $P_{1},\dots,P_{k}$ \[
\tau\left(\left[P_{1}(\vec{X}_{t}^{(i_{1})})-\tau(P_{1}(\vec{X}_{t}^{(i_{1})}))\right]\cdots\left[P_{k}(\vec{X}_{t}^{(i_{k})})-\tau(P_{k}(\vec{X}_{t}^{(i_{k})}))\right]\right)=o(t)\qquad\textrm{as }t\to0\]
whenever $i_{1},\dots,i_{k}\in\{1,2\}$, $i_{1}\neq i_{2}$, $i_{2}\neq i_{3}$,
$\dots$, and we have set $\vec{X}_{t}^{(i)}=(X_{t}^{(1,i)},\dots,X_{t}^{(n_{i},i)})$.

Thus we make the following definition:

\begin{definition}
Let $(\mu,\mu')$ be an infinitesimal law defined on $n+m$ variables
$t_{1},\dots,t_{n+m}$. We say that $(t_{1},\dots,t_{n})$ and $(t_{n+1},\dots,t_{n+m})$
are \emph{infinitesimally free with respect to $(\mu,\mu')$ }if $(t_{1},\dots,t_{n})$
and $(t_{n+1},\dots,t_{n+m})$ are freely independent to order $t$
with respect to the law $\mu+t\mu'$. 
\end{definition}
It is not hard to see that this condition translates into two requirements:
(i) that $(t_{1},\dots,t_{n})$ and $(t_{n+1},\dots,t_{n+m})$ be
freely independent with respect to the law $\mu$ and (ii) that \begin{equation}
\mu'\left(\left[p_{1}-\mu(p_{1})\right]\cdots\left[p_{k}-\mu(p_{k})\right]\right)-\sum_{j}\mu\left(\left[p_{1}-\mu(p_{1})\right]\cdots\left[\mu'(p_{j})\right]\cdots\left[p_{k}-\mu(p_{k})\right]\right)=0\label{eq:1storder-free}\end{equation}
for any polynomials $p_{1},\dots,p_{k}$ so that $p_{j}\in A_{i(j)}$,
$i(1)\neq i(2)$, $i(2)\neq i(3)$, $\dots$ , where $A_{1}=\mathbb{C}\langle t_{1},\dots,t_{n}\rangle$
and $A_{2}=\mathbb{C}\langle t_{n+1},\dots,t_{m}\rangle$. 

\begin{prop}
\label{pro:restriction-determines}Assume that $A_{1}=\mathbb{C}\langle t_{1},\ldots,t_{n}\rangle$
and $A_{2}=\mathbb{C}\langle t_{n+1},\dots,t_{m}\rangle$ are infinitesimally
free in $\mathbb{C}\langle t_{1},\dots,t_{n+m}\rangle$ with respect
to the law $(\mu,\mu')$. Then the restriction of $(\mu,\mu')$ to
the subalgberas $A_{1}$ and $A_{2}$ determines $(\mu,\mu')$. 
\end{prop}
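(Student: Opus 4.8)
The plan is to reduce every joint moment to the restricted data by the classical centering-and-telescoping argument of free probability, carried out in two stages: first for the zeroth-order functional $\mu$ using condition (i), and then for the first-order functional $\mu'$ using the identity \eqref{eq:1storder-free}. Since $A_1$ and $A_2$ together generate $\mathbb{C}\langle t_1,\dots,t_{n+m}\rangle$ and each generator lies in one of them, every monomial can be grouped into an \emph{alternating word} $c_1c_2\cdots c_k$ with $c_i\in A_{\epsilon(i)}$, $\epsilon(i)\in\{1,2\}$ and $\epsilon(1)\neq\epsilon(2)\neq\cdots$. By linearity it therefore suffices to show that $\mu(c_1\cdots c_k)$ and $\mu'(c_1\cdots c_k)$ are determined by $\mu|_{A_1},\mu|_{A_2},\mu'|_{A_1},\mu'|_{A_2}$ for every such word, and I would prove this by strong induction on the number $k$ of blocks, the base cases $k\le1$ being exactly the restrictions.

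For the first coordinate I would invoke condition (i), that $A_1,A_2$ are free in the type $A$ sense with respect to $\mu$. Writing $\mathring c_i=c_i-\mu(c_i)$ and expanding $\mu\big(\prod_i(\mathring c_i+\mu(c_i))\big)$, the fully centered term $\mu(\mathring c_1\cdots\mathring c_k)$ vanishes by the definition of freeness, while in every remaining term at least one factor has been replaced by a scalar $\mu(c_i)$, which is a single-algebra moment and hence known. Pulling these scalars out and merging any newly adjacent factors from the same algebra into a single element of that algebra (this is possible because the original word alternates) produces words with strictly fewer blocks, which are determined by the inductive hypothesis. This is Voiculescu's classical computation and shows that $\mu$ is completely determined.

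For the second coordinate I would run the same expansion inside $\mu'$, using that $\mu'$ is linear and that scalars pull out of it, so in particular $\mu'(\mathring c_j)=\mu'(c_j)$ because $\mu'(1)=0$. Every term in which at least one factor is a scalar $\mu(c_i)$ reduces, after merging same-algebra neighbours, to $\mu'$ evaluated on an alternating word with strictly fewer blocks (times a product of known single-algebra $\mu$-moments), and is thus determined by induction. The only genuinely new term is the fully centered one $\mu'(\mathring c_1\cdots\mathring c_k)$, and here I would apply \eqref{eq:1storder-free} with $p_i=c_i$: it rewrites this quantity as $\sum_j \mu'(c_j)\,\mu(\mathring c_1\cdots\widehat{\mathring c_j}\cdots\mathring c_k)$ (the $j$-th factor omitted), i.e. as a combination of single-algebra first-order moments $\mu'(c_j)$, known from the restriction, and of $\mu$-moments of words of length $k-1$, already determined in the first stage. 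Hence $\mu'(c_1\cdots c_k)$ is determined, completing the induction.

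The main difficulty is organizational rather than conceptual: one must set up the induction on the number of alternating blocks so that the ``pull out a scalar and merge same-algebra neighbours'' step is manifestly block-decreasing, and one must check that the merged elements $\mathring c_{i-1}\mathring c_{i+1}$, although no longer centered, are honest elements of a single subalgebra whose required moments are supplied by the restrictions. The one ingredient beyond the type $A$ argument is the identity \eqref{eq:1storder-free}, which for $\mu'$ plays exactly the role that vanishing of the fully centered moment plays for $\mu$.
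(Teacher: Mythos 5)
Your proof is correct and is precisely a fleshed-out version of the paper's own (one-sentence) argument: the paper likewise determines $\mu$ by ordinary freeness and then determines $\mu'$ from the identity \eqref{eq:1storder-free} together with linearity and $\mu'(1)=0$. Your induction on the number of alternating blocks, with the centering expansion and the observation that \eqref{eq:1storder-free} reduces the fully centered $\mu'$-term to single-algebra data and shorter $\mu$-moments, is exactly the intended mechanism.
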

\begin{proof}
Indeed, this is the case for $\mu$ (because of freeness); and \eqref{eq:1storder-free}
together with linearity and the requirement that $\mu'(1)=0$ defines
$\mu'$ in terms of its restriction to $A_{1}$ and $A_{2}$. 
\end{proof}
\begin{remark}
\label{rem:freeness-fort-implies-inffree}It is immediate from the
definition that if two families $X_{t}^{(j,1)}$, $j=1,\dots,n_{1}$
and $X_{t}^{(j,2)}$, $j=1,\dots,n_{2}$ are actually freely indepedent
(for all $t$), then they are also infinitesimally free. 
\end{remark}
In particular, if we are given two infinitesimal laws $(\mu,\mu')$,
$(\nu,\nu')$, and we set $\mu_{t}=\mu+t\mu'$, $\nu_{t}=\nu+t\nu'$,
then\begin{eqnarray*}
\eta & = & \mu*\nu\\
\eta' & = & \frac{d}{dt}\Big|_{t=0}(\mu_{t}*\nu_{t})\end{eqnarray*}
(here $\mu*\nu$ denotes the free product of two laws) gives rise
to an infinitesimal law $(\eta,\eta')$ of $n+m$ variables so that
its restrictions to the first $n$ and the last $m$ variables are
exactly $(\mu,\mu')$ and $(\nu,\nu')$. We denote this law by $(\eta,\eta')=(\mu,\mu')*(\nu,\nu')$.
This is the (unique because of Proposition \eqref{pro:restriction-determines})
free product of $(\mu,\mu')$ and $(\nu,\nu')$.

\subsection{Connection with type $B$ free independence.}

Let now $(X^{(j)},\xi^{(j)})$ be type $B$ random variables in a non-commutative
type $B$ probability space $(A,\tau,\mathcal{V},f,\Phi)$. Associated
to them we consider an infinitesimal famly of (type $A$) random variables\[
X_{\hbar}^{(j)}=\left[\begin{array}{cc}
X^{(j)} & \xi^{(j)}\\
0 & X^{(j)}
\end{array}\right]=X^{(j)}+\hbar\xi^{(j)},\]
where $\hbar$ is a formal variable satisfying $\hbar^{2}=0$. In
other words, we consider the infinitesimal law $(\mu_{0},\mu_{0}')$
given by

\begin{eqnarray*}
\mu_{0}(t_{i_{1}},\ldots,t_{i_{n}}) & = & \tau(X^{(i_{1})}\cdots X^{i_{n}})\\
\mu'_{0}(t_{i_{1}},\ldots,t_{i_{n}}) & = & \sum_{j}f(X^{(i_{1})}\cdots X^{(i_{j-1})}\xi^{(i_{j})}X^{(i_{j+1})}\cdots X^{(i_{n})}).\end{eqnarray*}
 We shall call this \emph{the infinitesimal law }associated to the
type $B$ family $(X^{(j)},\xi^{(j)}).$ 

Note that $(\mu_{0},\mu_{0}')$ does not capture all of the type $B$
law of the original family but only certain averages of moments (it
does, however, capture the {}``type $A$ part'' of the law of $(X^{(j)},\xi^{(j)})$,
which is exactly $\mu_{0}$). For example,\[
\mu_{0}'(X_{1}X_{2}X_{1}X_{2})=f(X_{1}\xi_{2}X_{1}X_{2})+f(\xi_{1}X_{2}X_{1}X_{2})+f(X_{1}X_{2}\xi_{1}X_{2})+f(X_{1}X_{2}X_{1}\xi_{2}).\]
Even if we assume some tracialy of $f$, e.g. $f(A\xi_{j}B)=f(BA\xi_{j})=f(\xi_{j}BA)$,
the four terms on the right reduce to two terms $2(f(X_{1}X_{2}X_{1}\xi_{2})+f(X_{2}X_{1}X_{2}\xi_{1}))$
but the equation still cannot be used to determine fully the type
B law of the family $(X_{j},\xi_{j})$. 

Nonetheless we have:

\begin{prop}
\label{pro:B-free-implies-inffree}If $((X^{(1)},\xi^{(1)}),\dots,(X^{(n)},\xi^{(n)})$
and $((X^{(n+1)},\xi^{(n+1)}),\dots,(X^{(n+m)},\xi^{(n+m)}))$ are
two families of type $B$ variables in $(A,\mathcal{V},\tau,f,\Phi)$
which are free, then the infinitesimal law $(\eta,\eta')$ associated
to $((X^{(1)},\xi^{(1)}),\dots,(X^{(n+m)},\xi^{(n+m)}))$ is the free
product of the infinitesimal laws $(\mu,\mu')$ and $(\nu,\nu')$
associated to the families $((X^{(1)},\xi^{(1)}),\dots,(X^{(n)},\xi^{(n)}))$
and $((X^{(n+1)},\xi^{(n+1)}),\dots,(X^{(n+m)},\xi^{(n+m)}))$.
\end{prop}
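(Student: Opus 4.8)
The plan is to show that the two families of type $B$ variables, being $B$-free, satisfy the infinitesimal freeness conditions, and in particular that the associated infinitesimal law is the free product in the sense defined above. By Proposition \ref{pro:restriction-determines}, once we know that the two subalgebras are infinitesimally free and that the restrictions match, the law $(\eta,\eta')$ is uniquely determined and must coincide with $(\mu,\mu')*(\nu,\nu')$. So the real content is to verify the two conditions defining infinitesimal freeness for the pair of subalgebras generated by the two families: condition (i), that the type $A$ parts are freely independent, and condition (ii), the first-order identity \eqref{eq:1storder-free}.

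First I would observe that condition (i) is immediate. By the definition of type $B$ freeness (Definition \ref{freenessB}, condition (i)), the subalgebras $\mathcal{A}_1, \mathcal{A}_2$ generated by the $X^{(j)}$ of the two families are free in the type $A$ sense in $(\mathcal{A}, \tau)$. Since the type $A$ part of the infinitesimal law is exactly $\mu_0 = \tau \circ (\text{evaluation})$, this gives precisely type $A$ freeness of $(t_1,\dots,t_n)$ and $(t_{n+1},\dots,t_{n+m})$ with respect to $\eta$. Thus the families $X^{(j)}$ (without the $\xi$'s) are $A$-free, and the restriction of $\eta$ to each subalgebra agrees with the first coordinate of the corresponding input law.

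The heart of the argument is condition (ii), the identity \eqref{eq:1storder-free} for $\eta' = \mu_0'$. Here I would take polynomials $p_1, \dots, p_k$ with $p_j \in \mathcal{A}_{i(j)}$, alternating indices, and centered so that $\tau(p_j) = \mu(p_j) = 0$ (reducing to this case by linearity and the centering in the definition). Expanding $\eta'(p_1 \cdots p_k)$ using the formula for $\mu_0'$ produces a sum over positions $\ell$ where one factor $p_\ell$ has its $X$ replaced by the corresponding $\xi$; equivalently, using the matrix realization $X_\hbar = X + \hbar \xi$, one evaluates $f$ applied to the single $\hbar$-linear term of the product $p_1(X_\hbar) \cdots p_k(X_\hbar)$. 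The goal is to show this equals $\sum_j \mu([p_1]\cdots[\mu'(p_j)]\cdots[p_k])$, i.e. that the only surviving contributions come from the diagonal terms where the $\xi$ sits inside a single centered factor $p_j$, with the remaining factors evaluated via $\tau$. This is exactly the kind of statement controlled by condition (ii) of Definition \ref{freenessB}: the formula there, with its requirement $m = n$ and the product of two-point functions $\tau(a_\ell b_\ell)$, is precisely what forces the mixed terms (where the $\xi$ straddles factors from different subalgebras, or pairs up non-consecutively) to vanish, leaving only the desired diagonal contributions.

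The main obstacle I expect is bookkeeping: when one expands each centered polynomial $p_j$ into its reduced (centered) components and tracks which factor carries the $\xi$, one must carefully match the resulting words against the hypotheses of Definition \ref{freenessB}(ii) — in particular verifying that the $\tau$-centering of the surrounding letters and the alternation of subalgebra indices line up so that the vanishing clause ($m \neq n$) and the pairing clause apply correctly. A clean way to organize this is to work directly with the type $A$ product $X_\hbar$ over the algebra $\mathcal{C}$, use that the families are free with amalgamation over $\mathcal{C}$ (equivalently, apply the operator-valued freeness structure noted after Definition \ref{freenessB}), and extract the $\hbar$-linear coefficient; freeness over $\mathcal{C}$ automatically encodes both \eqref{eq:1storder-free} and the type $A$ freeness simultaneously. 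The derivation of \eqref{eq:1storder-free} then becomes a matter of comparing the $\hbar^0$ and $\hbar^1$ coefficients of the defining moment relations, which I would carry out by a straightforward induction on $k$, the centering making all but the diagonal terms drop out.
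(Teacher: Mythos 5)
Your proposal is correct and follows essentially the same route as the paper: verify type $A$ freeness for condition (i), expand $\eta'$ of an alternating centered word via the Leibniz rule for the derivation $X\mapsto\xi$ (your $\hbar$-linear coefficient), cancel against the $\sum_j\mu(\cdots[\mu'(p_j)]\cdots)$ terms one position at a time using Definition \ref{freenessB}(ii), and conclude by the uniqueness in Proposition \ref{pro:restriction-determines}. The paper's proof is exactly this, written with an explicit derivation $D$ and $\mu'=f\circ D$, so no further comparison is needed.
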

\begin{proof}
We first note that $\eta=\mu*\nu$, because type $B$ freeness entails
(type $A$) freeness of the families $(X^{(1)},\dots,X^{(n)})$ and $(X^{(n+1)},\dots,X^{(n+m)})$. 

Next, define a derivation $D:A\to\mathcal{V}$ given on monomials
by\[
D(X^{(i_{1})}\cdots X^{(i_{k})})=\sum_{j}X^{(i_{1})}\cdots X^{(i_{j-1})}\xi^{(i_{j})}X^{(i_{j+1})}\cdots X^{(i_{n})}.\]
Thus by definition $\mu'=f\circ D$.

Now, for any polynomials $p_{1},\dots,p_{k}$ so that $p_{j}\in A_{i(j)}$,
$i(1)\neq i(2)$, $i(2)\neq i(3)$,$\dots$ , where $A_{1}=\mathbb{C}\langle t_{1},\dots,t_{n}\rangle$
and $A_{2}=\mathbb{C}\langle t_{n+1},\dots,t_{m}\rangle$, we compute\begin{multline*}
\mu'\left(\left[p_{1}-\mu(p_{1})\right]\cdots\left[p_{k}-\mu(p_{k})\right]\right)-\sum_{j}\mu\left(\left[p_{1}-\mu(p_{1})\right]\cdots\left[\mu'(p_{j})\right]\cdots\left[p_{k}-\mu(p_{k})\right]\right)\\
=\sum_{j}f\left(\left[p_{1}-\tau(p_{1})\right]\cdots Dp_{j}\cdots\left[p_{k}-\tau(p_{k})\right]\right)\\
-\sum_{j}\tau\left(\left[p_{1}-\tau(p_{1})\right]\cdots\left[f(Dp_{j})\right]\cdots\left[p_{k}-\tau(p_{k})\right]\right)=0\end{multline*}
because the sums cancel term-by-term owing to type $B$ freeness.
\end{proof}

\subsubsection{Single variable case.\label{sub:Single-variable-case.}}

In the case that we have a single type $B$ random variable $X$ in a
type $B$ non-commutative probability space $(A,\mathcal{V},\tau,f,\Phi)$
satisfying a traciality condition, the infinitesimal law associated
to $X$ determines its type $B$ distribution. 

In the single variable case, $\tau$ is a trace. We will make the assumption
that the linear map $f$ satisfies\[
f(X^{k}\xi X^{l})=f(X^{k+l}\xi)\]
for all $k,l$. In this case, the infinitesimal family $\mu_{\hbar}=\mu_{0}+\hbar\mu_{0}'$
determines $f$ and $\tau$ completely by the formulas:

\[
\tau(X^{n})=\int t^{n}d\mu_{0}(t),\qquad f(X^{j-i}\xi X^{i})=\frac{1}{j+1}\int t^{j+1}d\mu'_{0}(t).\]

\subsection{Free additive convolution for infinitesimal laws.}

Given two infinitesimal laws $(\mu,\mu')$ and $(\nu,\nu')$ defined
on algebras $\mathbb{C}[t_{1}]$ and $\mathbb{C}[t_{2}]$ we define
their infinitesimal free addtive convolution by\[
(\mu,\mu')\boxplus(\nu,\nu')=((\mu,\mu')*(\nu,\nu'))|_{\textrm{Alg}(t_{1}+t_{2})}.\]
In other words, the additive free convolution is the push-forward
(under the addition map $(t_{1},t_{2})\mapsto t_{1}+t_{2}$) of their
free product.

We note that there are two ways to compute this, in view of Proposition
\ref{pro:B-free-implies-inffree} and Remark \ref{rem:freeness-fort-implies-inffree}:

\begin{prop}
\label{pro:infinitesimalbox-vs-boxplus}Let $(\eta,\eta')=(\mu,\mu')\boxplus(\nu,\nu')$.
Then:\\
(a) $\eta=\mu\boxplus\nu$ (ordinary type $A$ free convolution) and
if we set $\mu_{t}=\mu+t\mu'$, $\nu_{t}=\nu+t\nu'$, then $\eta'=\frac{d}{dt}\Big|_{t=0}\mu_{t}\boxplus\nu_{t}$;\\
(b) Let $\mu^{B}$ and $\nu^{B}$ be the type $B$ laws associated to
$\mu$ and $\nu$ as in \S\ref{sub:Single-variable-case.}. Then
$(\eta,\eta')$ is the infinitesimal law associated to $\mu^{B}\boxplus_{B}\nu^{B}$.
\end{prop}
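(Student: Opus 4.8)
The plan is to reduce both parts to the free product of infinitesimal laws followed by the push-forward under addition, and then to invoke Proposition \ref{pro:B-free-implies-inffree}. For part (a) I would simply unwind the definition $(\mu,\mu')\boxplus(\nu,\nu')=\big((\mu,\mu')*(\nu,\nu')\big)|_{\mathrm{Alg}(t_1+t_2)}$. The free product $(\mu,\mu')*(\nu,\nu')$ was constructed so that its first coordinate is $\mu*\nu$ and its second coordinate is $\frac{d}{dt}\big|_{t=0}(\mu_t*\nu_t)$, where $\mu_t=\mu+t\mu'$ and $\nu_t=\nu+t\nu'$. Restriction to $\mathrm{Alg}(t_1+t_2)$ is, on each coordinate, just evaluation of the functional on polynomials in $t_1+t_2$. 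On the first coordinate this is precisely free additive convolution, so $\eta=\mu\boxplus\nu$. On the second coordinate, restriction to $\mathrm{Alg}(t_1+t_2)$ is a fixed linear operation not involving $t$, hence it commutes with $\frac{d}{dt}\big|_{t=0}$; since $(\mu_t*\nu_t)|_{\mathrm{Alg}(t_1+t_2)}=\mu_t\boxplus\nu_t$ for every $t$, differentiating at $t=0$ yields $\eta'=\frac{d}{dt}\big|_{t=0}(\mu_t\boxplus\nu_t)$. This part is essentially bookkeeping.

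For part (b) I would realize $\mu^{B}$ and $\nu^{B}$ concretely. Using the free product of type $B$ probability spaces from \cite{BGN}, choose B-free type $B$ variables $(X_1,\xi_1)$ and $(X_2,\xi_2)$ in a common space $(A,\tau,\mathcal V,f,\Phi)$ whose individual type $B$ laws are $\mu^{B}$ and $\nu^{B}$. By the single-variable correspondence of \S\ref{sub:Single-variable-case.}, the infinitesimal laws associated to $(X_1,\xi_1)$ and $(X_2,\xi_2)$ are exactly $(\mu,\mu')$ and $(\nu,\nu')$. By the very definition of $\boxplus_B$, the type $B$ law $\mu^{B}\boxplus_B\nu^{B}$ is the law of the single type $B$ variable $(X_1+X_2,\xi_1+\xi_2)$, whose infinitesimal law has first coordinate $\tau((X_1+X_2)^n)$ and second coordinate $\sum_j f((X_1+X_2)^{j-1}(\xi_1+\xi_2)(X_1+X_2)^{n-j})$.

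The derivation $D$ introduced in the proof of Proposition \ref{pro:B-free-implies-inffree} satisfies the Leibniz rule and $D(X_1+X_2)=\xi_1+\xi_2$, so that $D((X_1+X_2)^n)=\sum_j(X_1+X_2)^{j-1}(\xi_1+\xi_2)(X_1+X_2)^{n-j}$. Consequently the infinitesimal law of the sum is precisely the push-forward under $(t_1,t_2)\mapsto t_1+t_2$ of the joint infinitesimal law of the pair $((X_1,\xi_1),(X_2,\xi_2))$. Proposition \ref{pro:B-free-implies-inffree} identifies that joint law as the free product $(\mu,\mu')*(\nu,\nu')$, whose push-forward under addition is $(\mu,\mu')\boxplus(\nu,\nu')=(\eta,\eta')$ by the definition recalled in part (a); this proves (b).

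The main obstacle is the realization step at the start of part (b): one must produce B-free copies of $\mu^{B}$ and $\nu^{B}$ inside a single type $B$ probability space while preserving the traciality hypothesis $f(X^{k}\xi X^{l})=f(X^{k+l}\xi)$ on which \S\ref{sub:Single-variable-case.} relies. Concretely, I expect to verify that the free product of two type $B$ spaces each carrying such a tracial $f$ again carries a functional with this property on the relevant variables, so that the factors \emph{and} their sum are genuine single-variable type $B$ laws in the sense of \S\ref{sub:Single-variable-case.}. Once this compatibility is checked, the remaining identifications are immediate from Proposition \ref{pro:B-free-implies-inffree} and the derivation computation above.
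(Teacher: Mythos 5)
Your argument is correct and is exactly the route the paper takes: the paper offers no separate proof of this proposition, stating only that it follows from Proposition \ref{pro:B-free-implies-inffree} and Remark \ref{rem:freeness-fort-implies-inffree}, and your write-up is precisely the unwinding of those two references together with the definition of the free product of infinitesimal laws and its restriction to $\mathrm{Alg}(t_1+t_2)$. The ``obstacle'' you flag at the end is not actually needed for the statement as given: the infinitesimal law \emph{associated to} a type $B$ variable (here $(X_1+X_2,\xi_1+\xi_2)$) is defined without any traciality hypothesis --- traciality enters in \S\ref{sub:Single-variable-case.} only to pass in the opposite direction, from $(\mu,\mu')$ to $\mu^{B}$, which is already part of the hypothesis for the two factors --- so your derivation computation plus Proposition \ref{pro:B-free-implies-inffree} already closes the proof of (b).
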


\section{Analytic computation of free additive convolution of type $B$}

\subsection{Type $B$ free additive convolution}

We observe \cite{BGN} that if $(a_1,\xi_1)$ and $(a_2,\xi_2)$ are $B$-free, then the distribution of their sum depends only on the distributions of the two summands. Thus, it is possible to define a type $B$ free 
additive convolution, as an operation on the space of sequences of 
pairs of complex numbers $(a_n,b_n)$, $n\in\mathbb N$, by using the 
moment-cumulant formula given in Definition \ref{typeBcumul} and the
linearizing property of the type $B$ free cumulants. We will denote 
this operation by $\boxplus_B$. However, as in the 
case of the free convolution of type $A$, one would like to find the 
appropriate analytic object which will be stable under $\boxplus_B$.
There are several relevant answers to this question. We shall first
describe in the proposition below the analytic interpretation for
the operation $\boxplus_B$ as described in \cite{BGN}. (For the 
notation $\mathcal M_0$ used below we refer the reader to Definition
\ref{mare}.)

\begin{prop}\label{prop11}
Consider two type $B$ random variables $(a_1,\xi_1),(a_2,\xi_2)$
which are B-free, and are distributed according to $(\mu_1,\nu_1)$
and $(\mu_2,\nu_2)$, respectively.
Assume that $\mu_1,\mu_2\in\mathcal M$ and $\nu_1,\nu_2\in
\mathcal M_0$ are compactly supported on $\mathbb R$.
Denote by $(\mu_3,\nu_3)$ the distribution of $(a_1+a_2,\xi_1+\xi_2).$ 
Then, with the notations from Theorem \ref{subord} and Remark 
\ref{cauchy}, we have
\begin{trivlist}
\item[(a)] $\mu_3=\mu_1\boxplus\mu_2;$
\item[(b)] $g_{\nu_3}(z)=g_{\nu_1}(\omega_1(z))\omega_1'(z)+g_{\nu_2}
(\omega_2(z))\omega_2'(z),$ $z\in\mathbb C^+.$
\end{trivlist}
Moreover, $\mu_3\in\mathcal M$, $\nu_3\in\mathcal M_0$ have compact 
support in $\mathbb R$.
\end{prop}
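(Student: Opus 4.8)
The plan is to reduce everything to the operator-valued moment--cumulant machinery already set up in Corollary \ref{functionalequation} and then read off the two scalar components. Recall that the type $B$ $R$-transform linearizes $\boxplus_B$, so for the $B$-free pair we have $R_{(a_1+a_2,\xi_1+\xi_2)}(Z)=R_{(a_1,\xi_1)}(Z)+R_{(a_2,\xi_2)}(Z)$ as functions on $\mathcal C$. The key structural observation is that, since $\mathcal C$-valued functions split as $f=(f_1,f_2)$ with $f_2(z,w)=wf_1'(z)+g(z)$, every relation between these $\mathcal C$-valued objects decouples into a relation on the first coordinates (which is \emph{exactly} the classical type $A$ relation) and a relation on the second coordinates involving the derivatives. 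Thus part (a) will be immediate: the first-coordinate content of the moment--cumulant equation and of $R$-additivity is precisely the scalar statement that the first coordinates are free cumulants/Cauchy transforms of the $\mu_j$, so $\mu_3=\mu_1\boxplus\mu_2$ by Theorem \ref{subord}, and $\mu_3\in\mathcal M$ with compact support follows from the standard type $A$ theory.

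For part (b) I would work with the $\mathcal C$-valued Cauchy transform $G_{(a,\xi)}(Z)$ from Remark \ref{cauchy} rather than with $R$ directly, since the subordination functions $\omega_j$ of Theorem \ref{subord} are stated in terms of Cauchy transforms. First I would upgrade Theorem \ref{subord} to the $\mathcal C$-valued setting: define $\mathcal C$-valued maps $\Omega_j(Z)=(\omega_j(z),\,*)$ whose first coordinate is the classical $\omega_j$, and verify that the defining relations (1)--(2) of Theorem \ref{subord}, interpreted over $\mathcal C$, continue to hold for the $\mathcal C$-valued $G$'s. Because $R$-additivity holds over $\mathcal C$ and the moment--cumulant functional equation of Corollary \ref{functionalequation} is formally identical to the type $A$ one, the uniqueness argument behind Theorem \ref{subord} transfers verbatim (the algebra $\mathcal C$ is commutative, so there is no genuinely new analysis). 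Then the identity $G_{(a_j,\xi_j)}(\Omega_j(Z))=G_{(a_1+a_2,\xi_1+\xi_2)}(Z)$ holds as a $\mathcal C$-valued equation.

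The heart of part (b) is then to extract the second coordinate of the equation $G_{(a_j,\xi_j)}\circ\Omega_j=G_{(a_1+a_2,\xi_1+\xi_2)}$. Writing $G_{(a,\xi)}=(G_\mu,\,G_{\mu,\xi})$ where the first coordinate is $G_\mu(z)$ and the second has the shape $w\,G_\mu'(z)+g_\nu(z)$ as computed in Remark \ref{cauchy}, and applying the chain rule for composition of $\mathcal C$-valued maps, the second coordinate of the composition produces exactly terms of the form $g_{\nu_j}(\omega_j(z))\,\omega_j'(z)$ together with $w$-linear terms that must match on both sides. Matching the $w$-independent part (equivalently, evaluating at $w=0$) will isolate $g_{\nu_3}(z)=g_{\nu_1}(\omega_1(z))\omega_1'(z)+g_{\nu_2}(\omega_2(z))\omega_2'(z)$. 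I expect the main obstacle to be bookkeeping: carefully verifying that the second-coordinate chain rule over $\mathcal C$ yields precisely this combination, and confirming that the $\omega_j'(z)$ factors arise correctly from differentiating the first coordinate of $\Omega_j$. Finally, to conclude $\nu_3\in\mathcal M_0$ with compact support, I would use part (3) of Theorem \ref{subord} (the $\omega_j$ are analytic near infinity under compact support), so that $g_{\nu_3}$ is analytic on a neighbourhood of infinity; together with Remark \ref{Ccompact}, the expansion of $g_{\nu_3}$ at infinity recovers the moments $\nu_3(t^n)$ and shows $\nu_3$ is a well-defined compactly supported element of $\mathcal M_0$, with conditions (i), (ii)(a), (ii)(b) inherited from those of $\nu_1,\nu_2$ via the real-analyticity and normalization properties of the $\omega_j$ in part (4).
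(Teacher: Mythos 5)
Your overall strategy --- lift the subordination picture of Theorem \ref{subord} to $\mathcal C$-valued maps, write $\Omega_j(Z)=(\omega_j(z),o_j(z,w))$ with first coordinate the classical subordination function, and extract part (b) from the second coordinate --- is the paper's strategy, and your handling of part (a) and of the closing claim that $\nu_3\in\mathcal M_0$ has compact support matches the paper. However, the step where you claim the formula in (b) actually emerges is wrong as stated. The second coordinate of the composition $G_{(a_j,\xi_j)}(\Omega_j(Z))$ is
$$o_j(z,w)\,G_{\mu_j}'(\omega_j(z))+g_{\nu_j}(\omega_j(z)),$$
which contains $g_{\nu_j}(\omega_j(z))$ \emph{without} a factor $\omega_j'(z)$; the chain rule does not produce the combination $g_{\nu_j}(\omega_j(z))\,\omega_j'(z)$ that you assert. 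Writing $o_j(z,w)=w\,\omega_j'(z)+h_j(z)$, the $w$-linear parts of the subordination identity match automatically (because $\omega_j'(z)G_{\mu_j}'(\omega_j(z))=G_{\mu_3}'(z)$), and setting $w=0$ yields only
$$g_{\nu_3}(z)=h_j(z)\,G_{\mu_j}'(\omega_j(z))+g_{\nu_j}(\omega_j(z)),$$
one equation for each $j$ in the two unknown functions $h_1,h_2$. These are genuinely nonzero: the paper computes $h_1(z)=\bigl[g_{\nu_3}(z)(\omega_1'(z)-1)+g_{\nu_2}(\omega_2(z))\omega_2'(z)\bigr]/G_{\mu_3}'(z)$. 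So ``evaluating at $w=0$'' isolates nothing; the unknown constant terms of the $\Omega_j$ remain in the equation.

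The missing ingredient is the second coordinate of the sum relation $\Omega_1(Z)+\Omega_2(Z)=Z+F_{(a_1+a_2,\xi_1+\xi_2)}(Z)$, which gives $h_1(z)+h_2(z)=-g_{\nu_3}(z)/G_{\mu_3}(z)^2$. Combining this with the two displayed equations (solve each for $h_j$, multiply by $\omega_j'(z)$ so that $G_{\mu_j}'(\omega_j(z))\omega_j'(z)$ becomes $G_{\mu_3}'(z)$, sum, and use $\omega_1'+\omega_2'=1+F_{\mu_3}'$) is precisely what eliminates $h_1,h_2$ and yields $g_{\nu_3}=g_{\nu_1}(\omega_1)\omega_1'+g_{\nu_2}(\omega_2)\omega_2'$; this elimination is the actual content of the paper's proof. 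Relatedly, rather than asserting that existence and uniqueness of the $\Omega_j$ ``transfer verbatim'' from type $A$, the paper sidesteps that question by solving the linear system explicitly for $o_1,o_2$, which establishes existence of the $\mathcal C$-valued subordination functions and proves (b) in one stroke. Without the elimination step your argument does not reach the claimed identity.
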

\begin{proof}
It will follow from the Corollary \ref{functionalequation} and the 
corresponding type $A$ theory that one can find two subordination
functions $\Omega_1,\Omega_2$ so that 
\begin{equation}\label{O}
\Omega_1(Z)+\Omega_2(Z)
=Z+F_{(a_1+a_2,\xi_1+\xi_2)}(Z)\quad{\rm and}\quad 
G_{(a_j,\xi_j)}(\Omega_j(Z))=G_{(a_1+a_2,\xi_1+\xi_2)}(Z), 
j\in\{1,2\};
\end{equation} (recall that we denote
by $F$ the multiplicative inverse of $G$). 
Indeed, we will prove this fact by directly finding a formula
for $\Omega_j$; the proof will provide simultaneously the part
(b) of our proposition. Let us first observe that part (a) follows
directly from \cite[Section 7.2]{BGN}. Moreover, from (a), Theorem \ref{subord},
and Remark \ref{cauchy}, it follows that, if existing, the first 
coordinate of
$\Omega_j$ depends only on the first coordinate of $Z$ and coincides 
with the subordination function $\omega_j$ provided by Theorem \ref{subord}. So denote $\Omega_j(Z)=\Omega_j(z,w)=(\omega_j(z),o_j(z,w)).$
The subordination relation requires then for the second coordinate that
\begin{equation}\label{oj}
wG_{\mu_3}'(z)+g_{\nu_3}(z)=o_j(z,w)G_{\mu_j}'(\omega_j(z))+
g_{\nu_j}(\omega_j(z)).
\end{equation}
The second coordinate of the first relation in \eqref{O}
(the analogue of Theorem \ref{subord} (b)) gives
$$o_1(z,w)+o_2(z,w)=w-\frac{wG_{\mu_3}'(z)+g_{\nu_3}(z)}{G_{\mu_3}(z)^2}=w(1+F_{\mu_3}'(z))-\frac{g_{\nu_3}(z)}{G_{\mu_3}(z)^2}.$$
We shall isolate from the above two equations $o_1$. Indeed, it
follows easily that 
$o_2(z,w)= [wG_{\mu_3}'(z)+g_{\nu_3}(z)-g_{\nu_2}(\omega_2(z))]\cdot
[G_{\mu_2}'(\omega_2(z))]^{-1}.$ Amplifying the right-hand term
by $\omega_2'(z)$ and then replacing in the
equation above yields

$$o_1(z,w)+\frac{wG_{\mu_3}'(z)\omega_2'(z)+g_{\nu_3}(z)\omega_2'(z)-g_{\nu_2}(\omega_2(z))\omega_2'(z)}{G_{\mu_2}'(\omega_2(z))\omega_2'(z)}=w(1+F_{\mu_3}'(z))-\frac{g_{\nu_3}(z)}{G_{\mu_3}(z)^2},$$
and thus, by the chain rule and Theorem \ref{subord},
\begin{eqnarray}\label{o1}
o_1(z,w) & = & w(1+F_{\mu_3}'(z)-\omega_2'(z))
+g_{\nu_3}(z)\cdot\frac{F_{\mu_3}'(z)-\omega_2'(z)}{G_{\mu_3}'(z)}
+\frac{g_{\nu_2}(\omega_2(z))\omega_2'(z)}{G_{\mu_3}'(z)}\nonumber\\
& = & w\omega_1'(z)+\frac{g_{\nu_3}(z)(\omega_1'(z)-1)+g_{\nu_2}(\omega_2(z))\omega_2'(z)}{G_{\mu_3}'(z)}.
\end{eqnarray}

This provides the complete formula for the subordination function
$\Omega_1$. To conclude the proof of part (b) of the
proposition, one needs only to replace the above formula for $o_1$
in \eqref{oj}, for $j=1$:
$$wG_{\mu_3}'(z)+g_{\nu_3}(z)
=w G_{\mu_1}'(\omega_1(z))\omega_1'(z)
+g_{\nu_1}(\omega_1(z))+G_{\mu_1}'(\omega_1(z))\cdot
\frac{(\omega_1'(z)-1)g_{\nu_3}(z)+g_{\nu_2}(\omega_2(z))\omega_2'(z)
}{G_{\mu_3}'(z)};$$
multiplication by $\omega_1'(z)$ and an application of Theorem 
\ref{subord} yields the desired formula
$$g_{\nu_3}(z)=g_{\nu_1}(\omega_1(z))\omega_1'(z)+
g_{\nu_2}(\omega_2(z))\omega_2'(z).$$
The compacity of the support of $\mu_3$ is known.
It follows from the
compacity of the supports of $\nu_1,\nu_2$, the above formula,
and Theorem \ref{subord} that $g_{\nu_3}$ is analytic on a 
neighbourhood of infinity, that $g_{\nu_3}(\bar{z})=
\overline{g_{\nu_3}(z)}$ and that $\lim_{z\to\infty}zg_{\nu_3}(z)=0$.
Expanding $g_{\nu_3}$ in power series around infinity provides the
values $\nu_3(t^n)$, $n\ge0$, as seen in Remark \ref{Ccompact},
and makes possible to define $\nu_3$ as a linear functional on the
space of functions $t\mapsto(z-t)^{-n}$ in the obvious way.
The property (ii), (b), of Definition \ref{mare} follows easily
from the above and is left as an exercise. This concludes the proof of
our proposition.

\end{proof}

\subsection{A connection with conditionally free convolution.}
We follow next with a rather surprising application of the
result above, namely we show that the type $B$ free additive
convolution encodes, up to translation, the conditionally
free convolution (abbreviated c-free convolution) $\boxplus_C$
introduced by Bo\.zejko, Leinert and Speicher in \cite{BLS}.
This operation is defined on pairs of probability measures on the real 
line $(\mu,\rho)\in\mathcal M\times\mathcal M$, and on the
first coordinate acts as the free additive convolution:
if $(\mu_1,\rho_1),(\mu_2,\rho_2)\in\mathcal M\times\mathcal M$
and we denote $(\mu_3,\rho_3)=(\mu_1,\rho_1)\boxplus_C(\mu_2,\rho_2)$,
then $\mu_3=\mu_1\boxplus\mu_2$. In particular, the Cauchy transforms
for the first coordinate\footnote{It is customary in the theory 
of conditionally free convolution for the {\em second} coordinate to 
be chosen as the one on which $\boxplus_C$ acts as $\boxplus$, the usual free
additive convolution. We have reversed this convention in our paper in
order to emphasize the connection with $\boxplus_B$.} satisfy Theorem \ref{subord}. 

For any $\lambda\in\mathcal M$ we shall denote $h_\lambda(z)=
F_\lambda(z)-z,z\in\mathbb C^+$. It is a consequence of 
\cite[Equation 3.3]{akhieser}
that $h_\lambda$ takes values in the closure of the upper half-plane,
and is real if and only if $\lambda$ is a point mass.
\begin{remark}\label{h}
{\rm The following representation for $h_\lambda$, called the Nevanlinna 
representation, will be used in stating
and proving our next result: for any $\lambda\in\mathcal M$
there exist $a\in\mathbb R$ and a positive finite Borel
measure $\sigma$ on the real line so that

$$h_\lambda(z)=a+\int_\mathbb R\frac{1+tz}{t-z}\,d\sigma(t),\quad
z\in\mathbb C^+.$$
Observing that $\frac{1+tz}{t-z}=(1+t^2)\left(\frac{1}{t-z}-
\frac{t}{1+t^2}\right),$ it follows that one can write
$$h_\lambda(z)=\underbrace{a-\int_\mathbb R\frac{t}{1+t^2}\,(1+t^2)d\sigma(t)}_{\tilde{a}}+\int_\mathbb R\frac{1}{t-z}\,\underbrace{(1+t^2)d\sigma(t)}_{\tilde{\sigma}}=\tilde{a}-G_{\tilde{\sigma}}(z),$$
provided that $\sigma$ has finite second moment 
(i.e. $\int_\mathbb R t^2\,d\sigma(t)<+\infty$.) It is shown in 
\cite[Chapter 3]{akhieser} that this happens whenever $\lambda$ has 
finite second moment
(for the convenience of the reader, we will provide a sketch of the proof in the lemma below.)
Assume for now that this condition holds. Then we can define
an object $\tilde{\sigma}'\in\mathcal M_0$ by the relation
$G_{\tilde{\sigma}'}(z)=-G_{\tilde{\sigma}}'(z)$, i.e.
$$G_{\tilde{\sigma}'}(z)=-G_{\tilde{\sigma}}'(z)=h_\lambda'(z),
\quad z\in\mathbb C^+.$$
In particular, we observe that ${\tilde{\sigma}'}$ is the 
distributional derivative of $\tilde{\sigma}$, and in particular
it indeed belongs to $\mathcal M_0$. For convenience, we shall denote
by $\mathcal M_d$ the space of distributional derivatives of
positive finite measures on the real line.

}
\end{remark}

\begin{lemma}\label{rs}
For any $\sigma\in\mathcal M_d$ there exists a unique $\rho\in
\mathcal M$ so that $\int_\mathbb Rt^2\,d\rho(t)<+\infty$, 
$\int_\mathbb Rt\,d\rho(t)=0$, and $G_\sigma(z)=h_\rho'(z).$
Conversely, for any $\rho\in
\mathcal M$ so that $\int_\mathbb Rt^2\,d\rho(t)<+\infty$, 
$\int_\mathbb Rt\,d\rho(t)=0$, there exists a unique $\sigma
\in\mathcal M_d$ so that $G_\sigma(z)=h_\rho'(z).$
\end{lemma}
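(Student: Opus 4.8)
The plan is to read Lemma \ref{rs} as the assertion that the construction performed in Remark \ref{h} is a bijection, and to organize the proof around the two maps it sets up. On one side sits the chain $\rho \mapsto G_\rho \mapsto F_\rho \mapsto h_\rho \mapsto (\text{Nevanlinna measure}) \mapsto \tilde\sigma \mapsto \tilde\sigma'$; on the other its inverse, obtained by antidifferentiating a Cauchy transform. The converse assertion of the lemma is, up to packaging, exactly Remark \ref{h}: starting from $\rho \in \mathcal{M}$ with $\int t^2\,d\rho < \infty$ and $\int t\,d\rho = 0$, the Nevanlinna representation of $h_\rho$ produces a positive finite measure $\sigma_{\mathrm{Nev}}$, the finiteness of the second moment of $\rho$ guarantees (via the estimate of \cite{akhieser} quoted there) that $\tilde\sigma = (1+t^2)\,d\sigma_{\mathrm{Nev}}$ is again finite, and its distributional derivative $\tilde\sigma' \in \mathcal{M}_d$ satisfies $G_{\tilde\sigma'} = h_\rho'$. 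Uniqueness in this direction is automatic, since every arrow in the chain is single-valued: $h_\rho$ is determined by $\rho$, its Nevanlinna measure is unique, and the distributional derivative of a given finite measure is unique.

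The substance therefore lies in the forward direction. Given $\sigma \in \mathcal{M}_d$, I would first write $\sigma = \mu'$ for a positive finite measure $\mu$, noting that $\mu$ is uniquely determined: if $\mu_1' = \mu_2'$ as distributions, then $\mu_1 - \mu_2$ is a finite signed measure with vanishing distributional derivative, hence a constant multiple of Lebesgue measure, which is finite only if zero. I would then single out the distinguished primitive of $G_\sigma$ that decays at infinity, namely $h(z) := -G_\mu(z)$, so that $h' = -G_\mu' = G_{\mu'} = G_\sigma$ in the sense of Remark \ref{h} and $\lim_{y\to\infty} h(iy) = 0$. The task is to show that $h = h_\rho$ for a (necessarily unique) $\rho \in \mathcal{M}$, that is, that $F(z) := z + h(z)$ is the reciprocal Cauchy transform of a probability measure.

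For this I would invoke the Nevanlinna characterization of reciprocal Cauchy transforms. Positivity of $\mu$ gives $\Im G_\mu(z) \le 0$ on $\mathbb{C}^+$, so $h = -G_\mu$ is analytic and maps $\mathbb{C}^+$ into $\overline{\mathbb{C}^+}$; moreover $h(iy)/(iy) \to 0$, which forces the linear coefficient in its Nevanlinna representation to vanish. Consequently $F = z + h$ satisfies $\Im F \ge \Im z$ and $F(iy)/(iy) \to 1$, the two conditions guaranteeing that $F = F_\rho$ for a unique $\rho \in \mathcal{M}$. Rewriting $h = -G_\mu$ in Nevanlinna form through the identity $\frac{1+tz}{t-z} = (1+t^2)(\frac{1}{t-z} - \frac{t}{1+t^2})$ used in Remark \ref{h} identifies its Nevanlinna measure as $d\sigma_{\mathrm{Nev}} = \frac{1}{1+t^2}\,d\mu$, which has finite second moment because $\mu$ is finite; by the same \cite{akhieser} correspondence, $\rho$ then has finite second moment. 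Finally $\lim_{z\to\infty} h_\rho(z) = -\int t\,d\rho$, and since our $h$ vanishes at infinity we conclude $\int t\,d\rho = 0$.

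Uniqueness of $\rho$ in the forward direction, and the fact that the two constructions are mutually inverse, both reduce to one observation: $h_\rho'$ determines $h_\rho$ only up to an additive constant, and that constant equals $-\int t\,d\rho$, so imposing $\int t\,d\rho = 0$ removes exactly this ambiguity, after which $h_\rho$, hence $F_\rho$, hence $G_\rho$, hence $\rho$ is determined (finite measures being recovered from their Cauchy transforms). The main obstacle I anticipate is not any single computation but the careful control of the analytic class of $h$: one must verify that the naive antiderivative of $G_\sigma$ genuinely lands in the image of $\rho \mapsto h_\rho$ — that it is a bona fide Nevanlinna (Pick) function with vanishing linear term and the correct normalization at infinity — and that finiteness of $\mu$ translates precisely into the finite-second-moment condition on $\rho$. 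These are exactly the points at which the hypotheses (positivity and finiteness of the underlying measure on one side, finite second moment and centering of $\rho$ on the other) are consumed.
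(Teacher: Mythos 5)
Your proof is correct and follows essentially the same route as the paper: both directions pass through the Nevanlinna representation of $h_\rho$ and the correspondence between the second moment of $\rho$ and the finiteness of $(1+t^2)\,d\tau$, with uniqueness in each direction coming from the normalization $\int t\,d\rho=0$ and the uniqueness of a finite antiderivative measure. The only substantive difference is that you outsource the key moment estimate to \cite{akhieser}, whereas the paper proves it directly via the limits $\lim_{y\to\infty}(iy)^2h_\rho'(iy)$ and a short contradiction argument; your extra care in verifying that $z-G_\mu$ is a genuine reciprocal Cauchy transform fills a step the paper leaves implicit.
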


\begin{proof}
We mostly follow \cite[Chapter 3]{akhieser}. Let us first observe that for a given positive finite
measure $\rho$, we have $$ \int_\mathbb Rt\,d\rho(t)=
\lim_{y\to+\infty}iy[iyG_\rho(iy)-\rho(\mathbb R)],$$
and
$$ \int_\mathbb Rt^2\,d\rho(t)= \lim_{y\to+\infty}
iy\left[(iy)^2G_\rho(iy)-iy\rho(\mathbb R)-\int_\mathbb Rt\,d\rho(t)\right],$$
provided {\em both} these numbers exist (these results are proved in 
\cite[Theorem 3.2.1]{akhieser}.) In particular, for a probability $\rho$,
we have
\begin{equation}\label{10}
\lim_{y\to+\infty}((iy)^4G_\rho(iy)^2-2(iy)^3G_\rho(iy)+(iy)^2)=
\left(\int_\mathbb Rt\,d\rho(t)\right)^2
.\end{equation} 
Using the monotone convergence theorem, we obtain
\begin{eqnarray}\label{unshpe}
\lim_{y\to+\infty}((iy)^2-2(iy)^3G_\rho(iy)-(iy)^4G_\rho'(iy)) & = &
\lim_{y\to+\infty}\int_\mathbb R(iy)^2-2\frac{(iy)^3}{iy-t}
+\frac{(iy)^4}{(iy-t)^2}\,d\rho(t)\nonumber\\
& = & \lim_{y\to+\infty}\int_\mathbb R\frac{(ity)^2}{(iy-t)^2}\,d\rho(t)\nonumber\\
& = & \int_\mathbb Rt^2\,d\rho(t).
\end{eqnarray}
Combining \eqref{10} and \eqref{unshpe}, we get
\begin{eqnarray}
\lim_{y\to+\infty}(iy)^4(-G_\rho'(iy)-G_\rho(iy)^2) & = & 
\lim_{y\to+\infty}(iy)^2-2(iy)^3G_\rho(iy)-(iy)^4G_\rho'(iy) -[yi(iyG_\rho(iy)-1)]^2\nonumber\\
& = & \int_\mathbb R
t^2\,d\rho(t)-\left(\int_\mathbb Rt\,d\rho(t)\right)^2
\end{eqnarray}
Also, by using Remark \ref{rs}, we obtain that $h_\rho(z)=
a+\int_\mathbb R\frac{1+tz}{t-z}\,d\tau(t).$ Under the hypotheses that
the first moment of $\rho$ is zero and the second moment of $\rho$ is
finite, we claim that $\tau$ must also have finite second moment. 
Indeed, assume this is not the case. According to Section 1 of 
\cite[Chapter 3]{akhieser}, there is a sequence $y_n\to+\infty$
so that $\lim_{n\to\infty}|y_nh_\rho(iy_n)|=\infty.$ But, using the
fact that the first moment of $\rho$ is zero, 
$$\lim_{n\to\infty}|y_nh_\rho(iy_n)|=\lim_{n\to\infty}y_n
\left|\frac{iy_n-(iy_n)^2G_\rho(iy_n)}{iy_nG_\rho(iy_n)}\right|
=\int_\mathbb Rt^2\,d\rho(t)<\infty,$$
providing a contradiction.

Observing that $h_\rho'(z)=\int_\mathbb R\frac{1+t^2}{(t-z)^2}\,d\tau(t)$, we can apply the same methods as before to argue that
\begin{equation}\label{13}
\lim_{y\to+\infty}(iy)^2h'_\rho(iy)=\int_\mathbb R 1+t^2\,d\tau(t).
\end{equation} 
On the other hand,
\begin{eqnarray}\label{14}
\lim_{y\to+\infty}(iy)^2h'_\rho(iy) & = & \lim_{y\to+\infty}(iy)^2(
F'_\rho(iy)-1)\nonumber\\
& = & \lim_{y\to+\infty}\frac{(iy)^4(-G'_\rho(iy)-G_\rho(iy)^2)}{
(iy)^2G_\rho(iy)^2}\nonumber\\
& = & \frac{\int_\mathbb R
t^2\,d\rho(t)-\left(\int_\mathbb Rt\,d\rho(t)\right)^2}{1}
=\int_\mathbb R
t^2\,d\rho(t)<\infty.
\end{eqnarray}
Thus, $d\sigma_0(t):=(1+t^2)d\tau(t)$ is a finite
measure on the real line, and by our previous remark, 
$h'_\rho(z)=-G_{\sigma_0}'(z)=G_{\sigma}(z),$ where $\sigma=
\sigma_0'\in\mathcal M_d.$

Now proving the converse statement is easier. Indeed, if $\sigma\in
\mathcal M_d$, then there exists at least one positive finite
measure $\sigma_0$ so that $\sigma_0'=\sigma$ in distribution. 
Imposing the condition of finiteness specifies this measure
uniquely. So we can define
$\rho_a\in\mathcal M$ by $F_{\rho_a}(z)=a+z-G_{\sigma_0}(z).$
It follows easily from the previous computations that in order
for $\rho_a$ to have first moment zero, it is required that $a=0$.
Similarily, the second moment of $\rho_0$ will be $\lim_{y\to+\infty}
 iyG_{\sigma_0}(iy)=\sigma_0(\mathbb R).$ We will take $\rho=\rho_0$. Details are left to the
reader as an exercise.
\end{proof}
In our proof we will need the following result \cite[Corollary 4]{proc}.

\begin{prop}\label{cfree}
Let $(\mu_1,\rho_1),(\mu_2,\rho_2)\in\mathcal M\times\mathcal M,$
and denote $(\mu_3,\rho_3)=(\mu_1,\rho_1)\boxplus_C(\mu_2,\rho_2)$.
Then 
$$h_{\rho_3}(z)=h_{\rho_1}(\omega_1(z))+h_{\rho_2}(\omega_2(z)),\quad
z\in\mathbb C^+,$$ where $\omega_j$ is the subordination function 
corresponding to $\mu_j$, provided by Theorem \ref{subord} ($j\in
\{1,2\}.$)
\end{prop}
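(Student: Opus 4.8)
The plan is to reduce the second-coordinate identity to the additivity of the conditionally free $R$-transform of \cite{BLS}, using the subordination functions $\omega_1,\omega_2$ that are \emph{already} produced by Theorem \ref{subord} out of the first-coordinate relation $\mu_3=\mu_1\boxplus\mu_2$ (which is part of the very definition of $\boxplus_C$). Recall that $\boxplus_C$ is linearized by the c-free $R$-transform: to each pair $(\mu,\rho)$ one attaches an analytic function $R^{c}_{(\mu,\rho)}$, defined on a neighbourhood of $0$, which is \emph{additive}, so that $R^{c}_{(\mu_3,\rho_3)}=R^{c}_{(\mu_1,\rho_1)}+R^{c}_{(\mu_2,\rho_2)}$, while the distinguished (here, first) coordinate convolves freely. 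The bridge between this transform and the quantity $h_\rho=F_\rho-\mathrm{id}$ of the statement is the functional relation
$$h_\rho(z)=-R^{c}_{(\mu,\rho)}(G_\mu(z)),$$
valid for $z$ in a neighbourhood of infinity in $\mathbb C^+$; when $\rho=\mu$ this collapses to the ordinary Voiculescu relation $h_\mu(z)=-R_\mu(G_\mu(z))$, which is the consistency check that fixes all conventions. I would first establish (or cite from \cite{BLS}) this relation, checking that under the compact-support / finite-moment hypotheses every series involved converges on a neighbourhood of $\infty$.

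With these two ingredients the computation is short. Since $\mu_3=\mu_1\boxplus\mu_2$, Theorem \ref{subord}(1) furnishes analytic maps $\omega_1,\omega_2\colon\mathbb C^+\to\mathbb C^+$ with $G_{\mu_3}(z)=G_{\mu_1}(\omega_1(z))=G_{\mu_2}(\omega_2(z))$. Inserting $G_{\mu_3}(z)$ into the additive decomposition of $R^{c}$ and then routing each summand through the matching subordination relation gives, for $z$ near infinity,
$$h_{\rho_3}(z)=-R^{c}_{(\mu_3,\rho_3)}(G_{\mu_3}(z))=-R^{c}_{(\mu_1,\rho_1)}(G_{\mu_1}(\omega_1(z)))-R^{c}_{(\mu_2,\rho_2)}(G_{\mu_2}(\omega_2(z)))=h_{\rho_1}(\omega_1(z))+h_{\rho_2}(\omega_2(z)).$$

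It then remains to upgrade this from a neighbourhood of $\infty$ to all of $\mathbb C^+$. Here I would invoke analytic continuation: the right-hand side $h_{\rho_1}\circ\omega_1+h_{\rho_2}\circ\omega_2$ is analytic on $\mathbb C^+$ because each $\omega_j$ maps $\mathbb C^+$ into $\mathbb C^+$ and each $h_{\rho_j}$ is analytic there, while the left-hand side $h_{\rho_3}$ is analytic on $\mathbb C^+$ too. Two functions analytic on the connected domain $\mathbb C^+$ that coincide on the nonempty open set $\{z\in\mathbb C^+\colon|z|>R\}$ coincide everywhere by the identity theorem, which finishes the argument.

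The step I expect to be the genuine obstacle is the functional relation $h_\rho=-R^{c}_{(\mu,\rho)}\circ G_\mu$ together with the bookkeeping that makes the composition legitimate: one must verify that the combinatorial definition of the c-free cumulants in \cite{BLS} really produces this closed form, that $G_\mu(z)$ lands in the disc of convergence of $R^{c}_{(\mu,\rho)}$ for $z$ near $\infty$, and — for the substitution in the middle display — that $G_{\mu_j}(\omega_j(z))=G_{\mu_3}(z)$ remains in that disc as well. Everything else (additivity of $R^{c}$, the properties of $\omega_j$ from Theorem \ref{subord}, and the final identity-theorem argument) is routine once these convergence and domain issues are dispatched.
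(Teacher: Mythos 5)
The paper does not actually prove this proposition: it is imported verbatim as Corollary 4 of the reference \cite{proc}, so there is no internal argument to compare against. Your reconstruction --- additivity of the c-free $R$-transform, the functional relation $h_\rho(z)=-R^c_{(\mu,\rho)}(G_\mu(z))$, substitution via $G_{\mu_3}(z)=G_{\mu_j}(\omega_j(z))$, and the identity theorem --- is the standard proof of this subordination formula and is sound \emph{for compactly supported measures} (more generally, whenever all the series involved converge near infinity). The functional relation you posit is indeed the one from \cite{BLS}, rewritten in the paper's reversed convention, and your consistency check against the type $A$ identity $h_\mu=-R_\mu\circ G_\mu$ is the right way to pin down the indexing and the sign.

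The genuine gap is that the proposition is stated for arbitrary $(\mu_j,\rho_j)\in\mathcal M\times\mathcal M$, with no support or moment hypotheses, and in that generality your argument cannot get off the ground: if $\mu$ is not compactly supported then $G_\mu$ is not analytic at infinity, and if the measures lack moments the c-free cumulants, hence the power series $R^c_{(\mu,\rho)}$, do not exist at all --- indeed even the definition of $\boxplus_C$ then requires an analytic rather than combinatorial formulation. You flag the convergence issue, but only to dispatch it under ``compact-support / finite-moment hypotheses'' that are not in the statement, and you never return to remove them. Closing the gap requires either (i) Bercovici--Voiculescu-style analytic versions of the transforms defined on truncated Stolz angles, together with verification that $G_{\mu_3}$ and the compositions $G_{\mu_j}\circ\omega_j$ land in the domains where the relevant identities hold (this is essentially the content of the cited reference \cite{proc}), or (ii) a weak-approximation argument reducing to the compactly supported case, which in turn needs continuity of $\boxplus_C$ and of the subordination functions under weak convergence. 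Either route is a genuine additional piece of work rather than routine bookkeeping, which is presumably why the authors cite it instead of proving it.
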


The following theorem answers an open question mentioned in the 
introduction of \cite{BGN}.

\begin{theorem}\label{ans}
Consider pairs $(\mu_j,\rho_j)\in\mathcal M\times\mathcal M$ so that 
$\int_\mathbb Rt^2\,d\rho_j(t)<+\infty$, 
$\int_\mathbb Rt\,d\rho_j(t)=0$, $j\in\{1,2\}.$ Let $\sigma_j\in
\mathcal M_d$
be so that $h_{\rho_j}'(z)=G_{\sigma_j}(z)$, $z\in\mathbb C^+$
(the existence and uniqueness of $\sigma_j$ is guaranteed by Lemma
\ref{rs}.)
Denote  
$(\mu_3,\rho_3)=(\mu_1,\rho_1)\boxplus_C(\mu_2,\rho_2)$
Then $\rho_3$ satisfies the conditions
$\int_\mathbb Rt^2\,d\rho_3(t)<+\infty$, 
$\int_\mathbb Rt\,d\rho_3(t)=0$. Moreover, if we let $(\mu_3,\sigma_3)
=(\mu_1,\sigma_1)\boxplus_B(\mu_2,\sigma_2)$, then $h_{\rho_3}'(z)=
G_{\sigma_3}(z),$ $z\in\mathbb C^+$.
\end{theorem}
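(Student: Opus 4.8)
The plan is to reduce Theorem~\ref{ans} to Proposition~\ref{cfree} and part~(b) of Proposition~\ref{prop11} by differentiating the c-free subordination identity. First I would verify the two claimed conditions on $\rho_3$. Since $\mu_3=\mu_1\boxplus\mu_2$ and the first moments add under free convolution, $\int t\,d\rho_3(t)=\int t\,d\rho_1(t)+\int t\,d\rho_2(t)=0$; and finiteness of the second moment of $\rho_3$ follows from the fact that free convolution preserves finite variance together with the additivity of variances (which the computations in Lemma~\ref{rs} already make available through the behavior of $h_{\rho_j}$ near $i\infty$). With these in hand, Lemma~\ref{rs} guarantees that there is a well-defined $\sigma_3\in\mathcal M_d$ with $h_{\rho_3}'=G_{\sigma_3}$, so the statement $h_{\rho_3}'=G_{\sigma_3}$ is at least meaningful; the content is that this $\sigma_3$ coincides with the one produced by $\boxplus_B$.

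Next I would write down the c-free relation from Proposition~\ref{cfree},
\[
h_{\rho_3}(z)=h_{\rho_1}(\omega_1(z))+h_{\rho_2}(\omega_2(z)),
\]
and differentiate in $z$. By the chain rule this gives
\[
h_{\rho_3}'(z)=h_{\rho_1}'(\omega_1(z))\,\omega_1'(z)+h_{\rho_2}'(\omega_2(z))\,\omega_2'(z),
\]
where $\omega_1,\omega_2$ are exactly the subordination functions of Theorem~\ref{subord} for $\mu_1,\mu_2$. Using the hypothesis $h_{\rho_j}'=G_{\sigma_j}$ this becomes
\[
h_{\rho_3}'(z)=G_{\sigma_1}(\omega_1(z))\,\omega_1'(z)+G_{\sigma_2}(\omega_2(z))\,\omega_2'(z).
\]
Now I compare this with the formula for $\boxplus_B$. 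In the notation of Proposition~\ref{prop11} the second-coordinate output of $(\mu_1,\sigma_1)\boxplus_B(\mu_2,\sigma_2)$ is the $\nu_3\in\mathcal M_0$ determined by
\[
g_{\nu_3}(z)=g_{\sigma_1}(\omega_1(z))\,\omega_1'(z)+g_{\sigma_2}(\omega_2(z))\,\omega_2'(z),
\]
and since for elements of $\mathcal M_d$ the object $g_\sigma$ is precisely $G_\sigma$ (Remark~\ref{cauchy}), the two displayed right-hand sides are identical. Hence $G_{\sigma_3}(z)=h_{\rho_3}'(z)$, which is the assertion.

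There are two technical points I would be careful about, and the second is where I expect the real work to lie. The minor point is that Proposition~\ref{prop11} is stated for compactly supported $\nu_j$, whereas here the relevant objects $\sigma_j\in\mathcal M_d$ need only have finite total mass; I would note that the subordination formula for the second coordinate holds in this wider generality because it is an identity of analytic functions on $\mathbb C^+$ rather than a statement about compactly supported data, the formula being stable under the approximation arguments of Theorem~\ref{subord}, or alternatively argue it directly on $\mathbb C^+$ via the functional equation of Corollary~\ref{functionalequation}. The genuinely delicate step is justifying that the $\sigma_3$ defined \emph{abstractly} through $\boxplus_B$ equals the $\sigma_3$ obtained from $\rho_3$ through Lemma~\ref{rs}: both are elements of $\mathcal M_0$ whose Cauchy-type transform agrees on $\mathbb C^+$, so I would invoke the uniqueness clause in Definition~\ref{mare}, namely that an element of $\mathcal M_0$ is determined by the analytic function $z\mapsto\nu((z-t)^{-1})$, to conclude that agreement of $G_{\sigma_3}$ with $h_{\rho_3}'$ as analytic functions forces the two functionals to coincide. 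The main obstacle is therefore not the differentiation itself but marshalling these identification and well-definedness facts cleanly so that "$h_{\rho_3}'=G_{\sigma_3}$" is read as an equality of the correct objects in $\mathcal M_0$ rather than merely of two holomorphic functions.
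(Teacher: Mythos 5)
Your proposal is correct and follows essentially the same route as the paper: combine the c-free subordination identity of Proposition~\ref{cfree} (differentiated in $z$) with the type $B$ formula of Proposition~\ref{prop11}, identify $g_{\sigma_j}=G_{\sigma_j}=h_{\rho_j}'$, and invoke Lemma~\ref{rs} together with a compact-support-then-density argument to handle the general case. The paper's own proof is exactly this chain of equalities, stated more tersely; your extra care about identifying the abstract $\boxplus_B$-output with the functional produced by Lemma~\ref{rs} is a reasonable elaboration of what the paper leaves implicit.
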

\begin{proof} We assume first that
$(\mu_1,\rho_1),(\mu_2,\rho_2)$ have all compact support.
It follows then that $\sigma_1,\sigma_2$ also have
compact support (as described in Definition \ref{mare}.)
Propositions \ref{prop11} and \ref{cfree} guarantee then that 
$$g_{\sigma_3}(z)
=g_{\sigma_1}(\omega_1(z))\omega_1'(z)+g_{\sigma_2}(\omega_2(z))
\omega_2'(z)=(h_{\rho_1}(\omega_1(z))+h_{\rho_2}(\omega_2(z)))'=
h_{\rho_3}'(z).$$
Lemma \ref{rs} concludes the proof for the case of compactly
supported measures. The general case follows by using the denseness
of the set of probabilities with compact support in the space of all 
probabilities and Lemma \ref{rs}.

\end{proof}

\begin{remark}\label{rmk16}
{\rm \begin{enumerate}\item We observe immediately that one can 
generalize 
Theorem 15 to arbitrary pairs of probabilities $(\mu_j,\rho_j)$,
but at the cost of losing a significant analytic object on the second
coordinate in the world of type $B$ distributions. Indeed, relation
(b) in Proposition \ref{prop11}, in which $g_{\nu_j}$ is replaced
by $h_{\rho_j}'$, is easily seen to be stable when we consider weak
limits $\rho_j^{(n)}\to\rho_j$.
\item The theorem above together with the results of Krysztek 
\cite{krysztek} and of
Wang \cite{wang}
on conditionally free infinite divisibility and c-free limit
theorems gives a complete characterization of infinite divisibility
for pairs in $\mathcal M\times\mathcal M_d$.
\item We can also conclude from the above theorem and 
\cite[Proposition 6]{proc} the existence of the type $B$
analogue of the Nica-Speicher partial semigroup with respect to free 
additive convolution. Indeed, if we denote $(\mu_t,\rho_t)=
(\mu,\rho)^{\boxplus_Ct}$, it follows easily from Theorem \ref{ans}
and the corresponding formula $h_{\rho_t}(z)=th_{\rho}(\omega_t(z))$
that $(\mu_t,\sigma_t)=(\mu,\sigma)^{\boxplus_Bt}$ exists for $t\ge1$
and is defined by
\begin{equation}\label{nica-speicher}
\mu_t=\mu^{\boxplus t},\quad g_{\sigma_t}(z)=tg_\sigma(\omega_t(z))
\omega_t'(z),
\end{equation}
for all $(\mu,\sigma)\in\mathcal M\times\mathcal M_d$.
Here the function $\omega_t$ is the subordination function
corresponding to the semigroup $\mu^{\boxplus t}$, provided
by \cite{MZ}: $F_{\mu^{\boxplus t}}(z)=F_\mu(\omega_t(z))$.
We observe immediately that equation \eqref{nica-speicher} allows one 
to 
extend $(\mu,\sigma)^{\boxplus_Bt}$ to pairs $(\mu,\sigma)\in\mathcal M
\times
\mathcal M_0$.
\end{enumerate}}
\end{remark}

Theorem \ref{ans} suggests a more restricted space of
distributions of type $B$ that is stable under $\boxplus_B$.
Consider $f\colon\mathbb R\to\mathbb R$ to be a (bounded)
function of bounded variation, so that $f'(t)dt=df(t)$ is a signed 
finite measure. We shall define
$\nu$ by $g_\nu(z)= \int_\mathbb R\frac{1}{z-t}f{''}(t)\,dt$.
The second derivative of $f$ is taken in the distributional 
sense\footnote{In the most general case, one should not understand the expression 
$\int_\mathbb R\frac{1}{z-t}f{''}(t)\,dt$ as an integral, but as a 
linear functional applied to $t\mapsto(z-t)^{-1}$, as in the definition
of $\mathcal M_0$. The notation $\nu\left[\frac{1}{z-t}\right]$ would 
be more appropriate; however, for our purposes and methods, we believe
our notation to be more suggestive.},
so that $g_\nu(z)=\int_\mathbb R\frac{2}{(z-t)^3}f(t)\,dt.$ We 
observe that one can recover the distribution $\nu$ from the boundary values of
$g_\nu$ in an appropriate topology. For details we refer to
\cite{ultra} (see also \cite{Luszczki}.)
Denote $\mathcal M_2$ the subset of $\mathcal M_0$ formed by
functionals $\nu$ defined this way.

\begin{theorem}\label{thm17}
The set $\mathcal M\times\mathcal M_2$ is stable under the operation
$\boxplus_B$. Moreover, for any $(\mu_j,\nu_j)\in\mathcal M\times
\mathcal M_2$, $j\in\{1,2\}$, with the notation
$(\mu_3,\nu_3)=(\mu_1,\nu_1)\boxplus_B(\mu_2,\nu_2)$ we have $\mu_3=
\mu_1\boxplus\mu_2$ and $g_{\nu_3}(z)=g_{\nu_1}(\omega_1(z))\omega_1'(z)
+g_{\nu_2}(\omega_2(z))\omega_2'(z)$, $z\in\mathbb C^+$, where 
$\omega_j$ are the subordination functions corresponding to $\mu_j$,
$j\in\{1,2\}$.
\end{theorem}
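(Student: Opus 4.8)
The plan is to reduce the claim to Theorem \ref{ans}, which already settles the analogous question for the smaller space $\mathcal M_d$ of distributional derivatives of \emph{positive} finite measures. The key preliminary observation is that $\nu\in\mathcal M_2$ precisely when $g_\nu=-G_m'$ for a finite \emph{signed} measure $m$: the bounded BV function $f$ defining $\nu$ has $f'=m$ a finite signed measure and $g_\nu=-G_m'$, while conversely $f(t)=m((-\infty,t])$ recovers such an $f$ from any finite signed $m$. Applying the Jordan decomposition $m=m^+-m^-$ into positive finite measures then exhibits $\mathcal M_2=\mathcal M_d-\mathcal M_d$. First I would record this identification, together with the crucial point that in the formula $g_{\nu_3}(z)=g_{\nu_1}(\omega_1(z))\omega_1'(z)+g_{\nu_2}(\omega_2(z))\omega_2'(z)$ the subordination functions $\omega_1,\omega_2$ depend only on $\mu_1,\mu_2$ (Theorem \ref{subord}); thus, with $\mu_1,\mu_2$ fixed, the second coordinate depends additively on $\nu_1$ and on $\nu_2$.

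Next I would use an antiderivative trick. Since $g_{\nu_j}=\Psi_j'$ with $\Psi_j=-G_{m_j}$ (the antiderivative normalized to vanish at infinity), the chain rule rewrites each summand as $g_{\nu_j}(\omega_j(z))\omega_j'(z)=\frac{d}{dz}\bigl[\Psi_j(\omega_j(z))\bigr]$, so that $g_{\nu_3}=\Phi'$ where $\Phi(z)=-G_{m_1}(\omega_1(z))-G_{m_2}(\omega_2(z))$. Note $\Phi(z)\to0$ as $z\to\infty$ in $\mathbb C^+$, because $\omega_j(z)\to\infty$ there by Theorem \ref{subord}(4) and $G_{m_j}$ vanishes at infinity. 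The whole problem then reduces to showing that $\Phi=-G_{m_3}$ for a finite signed measure $m_3$; granting this, $\nu_3=m_3'\in\mathcal M_2$, while $\mu_3=\mu_1\boxplus\mu_2$ and the stated formula for $g_{\nu_3}$ are furnished by Proposition \ref{prop11} (extended off the compactly supported case by the same density argument used in the proof of Theorem \ref{ans}).

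To produce $m_3$ I would split $\Phi$ along the Jordan decompositions $m_j=m_j^+-m_j^-$ and regroup, writing $\Phi=\Phi^+-\Phi^-$ with $\Phi^\pm(z)=-G_{m_1^\pm}(\omega_1(z))-G_{m_2^\pm}(\omega_2(z))$. Each $(m_j^\pm)'$ lies in $\mathcal M_d$, so Lemma \ref{rs} produces probability measures of finite second and vanishing first moment realizing them, and Theorem \ref{ans} then guarantees that the second coordinate of $(\mu_1,(m_1^\pm)')\boxplus_B(\mu_2,(m_2^\pm)')$ again lies in $\mathcal M_d$. Since that second coordinate has transform $(\Phi^\pm)'$, normalizing the antiderivative to vanish at infinity gives $\Phi^\pm=-G_{p^\pm}$ for positive finite measures $p^\pm$. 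Subtracting, $\Phi=-G_{p^+-p^-}$, whence $m_3=p^+-p^-$ is the required finite signed measure and $\nu_3\in\mathcal M_2$.

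I expect the main difficulty to be bookkeeping around well-definedness and normalization rather than any deep analytic point: one must verify that $\Phi$ and its constituents are genuinely analytic on $\mathbb C^+$ for signed and not-necessarily-compactly-supported $m_j$, that the antiderivatives are correctly pinned down by their decay at infinity (so the additive constants do not corrupt the identities $\Phi^\pm=-G_{p^\pm}$), and that the passage from the combinatorial definition of $\boxplus_B$ to the analytic formula is legitimate throughout $\mathcal M_2$ — for which I would again invoke Proposition \ref{prop11} on compactly supported data together with the density argument of Theorem \ref{ans}.
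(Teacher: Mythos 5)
Your argument is correct, but it reaches the conclusion by a genuinely different route than the paper. Both proofs split the signed data into positive parts via a Jordan-type decomposition and both ultimately rest on a Nevanlinna--Herglotz representation, but the key mechanism differs. The paper works with each subordinated term separately: it introduces the primitive $H_{\nu_j}$ of $g_{\nu_j}$ normalized to vanish at infinity, shows directly that $H_{\nu_j}(\omega_j(z))=\int_{\mathbb R}(z-t)^{-1}\,d\tilde f_j(t)$ for a new function $\tilde f_j$ of bounded variation by applying the classical representation theorem of \cite{akhieser} to $z\mapsto\int_{\mathbb R}(\omega_j(z)-t)^{-1}\,df_j^{\pm}(t)$ (a map of $\mathbb C^+$ into the lower half-plane with the right behaviour at infinity), and then takes $f_3=\tilde f_1+\tilde f_2$. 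You instead identify $\mathcal M_2=\mathcal M_d-\mathcal M_d$ and reduce to the already-established stability of $\mathcal M\times\mathcal M_d$ coming from Theorem \ref{ans} together with Lemma \ref{rs}, exploiting the fact that $\omega_1,\omega_2$ depend only on $\mu_1,\mu_2$, so that the second coordinate of the convolution formula is additive in $(\nu_1,\nu_2)$. Your reduction is shorter given that Theorem \ref{ans} is in hand, but it makes the stability of $\mathcal M\times\mathcal M_2$ logically dependent on the conditionally free machinery (Proposition \ref{cfree} and the finite-second-moment bookkeeping of Lemma \ref{rs}), which the paper's direct argument bypasses; the paper's version also yields the slightly finer fact that each individual term $g_{\nu_j}(\omega_j(z))\omega_j'(z)$ is by itself the transform of an element of $\mathcal M_2$. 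Both treatments dispose of the non-compactly-supported case by the same approximation argument, and your normalization of the antiderivatives by their decay at infinity (via Theorem \ref{subord}(4)) is exactly the uniqueness-of-primitive step the paper performs for $H_{\nu_1}$.
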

 
\begin{proof} The result follows quite easily from Proposition
\ref{prop11}. Indeed, in order to prove our theorem, we
need to show that, given $f_1,f_2$ generating $\nu_1,\nu_2$
as above, the formula
$g_{\nu_1}(\omega_1(z))\omega_1'(z)+g_{\nu_2}(\omega_2(z))
\omega_2'(z)$ has the form $g_{\nu_3}(z)$ for $\nu_3=f_3{''}$
in the sense of distributions, $f_3$ being a real function with
finite variation. We first argue in the case when $\nu_1,\nu_2$
have compact support.

Consider the primitive $H_{\nu_1}(z)$ of $g_{\nu_1}(z)$ which
satisfies the condition $\lim_{z\to\infty}H_{\nu_1}(z)=0$.
We shall prove that $H_{\nu_1}(z)=\int_\mathbb R\frac{1}{
z-t}\,df_1(t) $, $z\in\mathbb C^+$. Indeed, since 
$H_{\nu_1}'(z)=g_{\nu_1}(z)$, the uniqueness of the primitive
under the hypothesis regarding the behaviour of $H_{\nu_1}$ at
infinity, the definition of distributional derivative and
the relations
$$\left[\int_\mathbb R\frac{1}{
z-t}\,df_1(t)\right]'=-\int_\mathbb R\frac{1}{(
z-t)^2}\,df_1(t)=-\int_\mathbb R\left(\frac{1}{
z-t}\right)'\,df_1(t)=\int_\mathbb R\frac{1}{
z-t}f_1{''}(t)\,dt=g_{\nu_1}(z),$$
complete the proof of our statement.

Let $\sigma$ be a monotone function on the real line with finite 
variance.
Since the function $z\mapsto\int_\mathbb R\frac{1}{\omega_1(z)-t}\,
d\sigma(t)$ maps $\mathbb C^+$ into the lower half-plane and 
$\lim_{y\to+\infty}iy\int_\mathbb R\frac{1}{\omega_1(iy)-t}\,
d\sigma(t)=1$, it follows from Section 1 of \cite[Chapter 3]{akhieser} 
that for any such $\sigma$
and $\omega_1$ as in Theorem \ref{subord}, there exists
another monotone function $\tau$ with finite variance on 
$\mathbb R$ so that $\int_\mathbb R\frac{1}{\omega_1(z)-t}\,
d\sigma(t)=\int_\mathbb R\frac{1}{z-t}\,d\tau(t)$, $z\in\mathbb C^+$.
Moreover, the variance of the two functions coincides (i.e. the 
total mass of the real line is the same under both measures $d\sigma(t)
$ and $d\tau(t)$.)
Since the bounded function $f_1$ takes real values and has finite 
variance, it can be written as the difference of two positive 
nondecreasing functions $f_1^+$ and $f_1^-$; applying the previous
 observation
separately to $f_1^+$ and $f_1^-$ guarantees the existence of a real
valued function $\tilde{f}_1$ with finite variation so that
$H_{\nu_1}(\omega_1(z))=\int_\mathbb R\frac{1}{z-t}\,d\tilde{f}_1(t),$
$z\in\mathbb C^+.$
As $g_{\nu_1}(\omega_1(z))\omega_1'(z)=
H_{\nu_1}'(\omega_1(z))\omega_1'(z)$, we conclude that there exists
$\tilde{\nu}_1=\tilde{f}_1{''}\in\mathcal M_2$ so that 
$g_{\tilde{\nu}_1}(z)=g_{\nu_1}(\omega_1(z))\omega_1'(z)$. 
By the same method we find 
$\tilde{\nu}_2=\tilde{f}_2{''}\in\mathcal M_2$ so that 
$g_{\tilde{\nu}_2}(z)=g_{\nu_2}(\omega_2(z))\omega_2'(z)$. It follows now easily that $f_3=\tilde{f}_1+\tilde{f}_2$ provides
a $\nu_3=f_3{''}\in\mathcal M_2$ so that 
$g_{\nu_3}(z)=g_{\nu_1}(\omega_1(z))\omega_1'(z)+
g_{\nu_2}(\omega_2(z))\omega_2'(z)$, $z\in\mathbb C^+$, and
so, by Proposition \ref{prop11}, $(\mu_3,\nu_3)=
(\mu_1,\nu_2)\boxplus_B(\mu_2,\nu_2).$ This holds for any
$\nu_1,\nu_2\in\mathcal M_2$ with compact support.

The general case follows by approximating $df_j(t)$ with compactly
supported measures $df_j^{(n)}(t)$, $j\in\{1,2\}$.
\end{proof}

For convenience, we shall collect in the corollary below the three
spaces of distributions that have been shown to be stable under
free additive convolution of type $B$. Of course, we do not claim these 
are all possible choices of such spaces, but only the ones that we
consider of particular importance for the puropses of this paper.
\begin{corollary}\label{spaces}
The spaces $\mathcal M\times\mathcal M_0$, $\mathcal M\times
\mathcal M_2$ and $\mathcal M\times\mathcal M_d$ are stable under 
the operation $\boxplus_B$ of free additive convolution of type $B$,
where
$\mathcal M_0$ is as in Definition \ref{mare}, 
$\mathcal M_2$ consists of all $\nu\in\mathcal M_0$ with the 
property that there exists a function $f\colon\mathbb R\to\mathbb R$
of bounded variation so that $\nu[(z-t)^{-1}]=\int f(t)(z-t)^{-3}\,dt$,
and $\mathcal M_d$ consists of all $\nu\in\mathcal M_2$ so that
if $\nu[(z-t)^{-1}]=\int f(t)(z-t)^{-3}\,dt$, then $f$ is 
nondecreasing.
The space $\mathcal M$ is the space of all Borel probability
measures on the real line.
\end{corollary}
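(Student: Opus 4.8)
The plan is to observe that Corollary \ref{spaces} is almost entirely a bookkeeping statement that repackages results already established, so the proof should mainly consist of assembling the relevant earlier theorems and reconciling the two different descriptions of the spaces $\mathcal M_2$ and $\mathcal M_d$. First I would address the stability of $\mathcal M\times\mathcal M_0$: this is precisely the content of Proposition \ref{prop11} in the compactly supported case, extended to the general case. For the general (non-compactly-supported) case I would invoke the observation in Remark \ref{rmk16}(1), namely that relation (b) of Proposition \ref{prop11}, $g_{\nu_3}(z)=g_{\nu_1}(\omega_1(z))\omega_1'(z)+g_{\nu_2}(\omega_2(z))\omega_2'(z)$, is stable under weak limits $\rho_j^{(n)}\to\rho_j$, together with the density of compactly supported measures; one then checks that the resulting $\nu_3$ still satisfies conditions (i) and (ii) of Definition \ref{mare}, so that $\nu_3\in\mathcal M_0$.

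Next I would handle $\mathcal M\times\mathcal M_2$, which is exactly the statement of Theorem \ref{thm17}, so stability here requires no new argument beyond citing that theorem; the only task is to verify that the characterization of $\mathcal M_2$ given in the corollary (functionals $\nu$ with $\nu[(z-t)^{-1}]=\int f(t)(z-t)^{-3}\,dt$ for some $f$ of bounded variation) matches the definition used in Theorem \ref{thm17}. This is immediate from the identity $g_\nu(z)=\int_\mathbb R\frac{2}{(z-t)^3}f(t)\,dt$ derived just before Theorem \ref{thm17}, modulo the harmless constant factor $2$ which can be absorbed into $f$.

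For $\mathcal M\times\mathcal M_d$, I would argue that $\mathcal M_d$ as described here — the subset of $\mathcal M_2$ for which the generating function $f$ is nondecreasing — coincides with the space of distributional derivatives of positive finite measures introduced in Remark \ref{h}, since $f$ nondecreasing means $df$ is a positive finite measure and $f''=(f')'=(df)'$ is its distributional derivative. Stability then follows from Theorem \ref{ans}: that theorem shows $\boxplus_B$ on $\mathcal M\times\mathcal M_d$ is a recasting of the c-free convolution $\boxplus_C$ on pairs $(\mu,\rho)$ with $\rho$ a probability measure of mean zero and finite variance, via $h_{\rho}'=G_\sigma$, and Lemma \ref{rs} establishes the bijection between such $\rho$ and $\sigma\in\mathcal M_d$. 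Since $\boxplus_C$ preserves the class of probability measures (the resulting $\rho_3$ is again a probability measure, and Theorem \ref{ans} verifies it still has mean zero and finite variance), the corresponding $\sigma_3$ lies again in $\mathcal M_d$.

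The main obstacle, and the only step that is not pure citation, is confirming that the three descriptions are genuinely nested, $\mathcal M_d\subseteq\mathcal M_2\subseteq\mathcal M_0$, and that each inclusion is compatible with the $\boxplus_B$ action — in particular, that when both inputs lie in the smaller space the output produced by the larger-space formula actually lands back in the smaller space. For $\mathcal M_2\subseteq\mathcal M_0$ this follows because a bounded-variation $f$ yields a $\nu$ meeting Definition \ref{mare}; for $\mathcal M_d\subseteq\mathcal M_2$ the positivity of $df$ is simply a restriction on $f$. The subtlety I would be most careful about is the general (non-compact) case in each space, where I must ensure that the limiting procedures (weak approximation by compactly supported measures, used in the proofs of both Theorem \ref{thm17} and Theorem \ref{ans}) preserve membership in the respective space; but since each of the three stability statements has already been proved in its own right in the cited results, the corollary itself reduces to recording these facts together with the identifications of the spaces, and so the proof can be kept short.
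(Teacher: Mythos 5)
Your proposal is correct and matches the paper's intent exactly: the corollary is stated there purely as a collection of already-established facts, with $\mathcal M\times\mathcal M_0$ covered by Proposition \ref{prop11} and Remark \ref{rmk16}(1), $\mathcal M\times\mathcal M_2$ by Theorem \ref{thm17}, and $\mathcal M\times\mathcal M_d$ by Theorem \ref{ans} together with Lemma \ref{rs}. Your additional checks (the harmless factor of $2$ in the description of $\mathcal M_2$, the identification of $\mathcal M_d$ with distributional derivatives of positive finite measures, and the nesting of the three spaces) are exactly the right bookkeeping.
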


\subsection{Infinitesimal freeness}
We establish next the appropriate analytic framework for the 
correspondence bewtween type $B$ free additive convolution and 
the infinitesimal freeness introduced in the previous section.
Consider a path $\gamma\colon[0,1]\to\mathcal M$. Each probability
$\gamma(t)$ has a unique nondecreasing distribution function $f_t$;
following the above notations, we write $d\gamma(t)(x)=f'_t(x)dx$.
We shall say that the path $\gamma$ is differentiable if 
$\lim_{t\to t_0}\frac{f_t-f_{t_0}}{t-t_0}$ is a (bounded) function of bounded
variation for any $t_0\in[0,1]$, where the limit is taken in the norm topology.
We have then $\int(z-x)^{-n}\,d\partial_tf_t(x)=
\partial_t\int(z-x)^{-n}\,df_t(x),$ $n\in\mathbb N$,
$\Im z\neq0$.

One can easily observe that differences of probability measures
belong to $\mathcal M_2$. Thus, it follows from the above that
$\gamma'(t)\in\mathcal M_2$. We exploit this observation in the following result, which generalizes Proposition
\ref{pro:infinitesimalbox-vs-boxplus} to distributions which need
not have moments:

\begin{theorem}\label{inf}

Assume that the functions $\gamma_j\colon[0,1]\to\mathcal M$ are
differentiable on $(0,1)$ and $\gamma_j'$ extend continuously
to $[0,1]$, $j\in\{1,2\}$. Then $(\gamma_1(t),\gamma_1'(t))
\boxplus_B(\gamma_2(t),\gamma_2'(t))=(\gamma_1(t)\boxplus
\gamma_2(t),\frac{d}{dt}
(\gamma_1(t)\boxplus\gamma_2(t)))$, for all $t\in[0,1]$.

\end{theorem}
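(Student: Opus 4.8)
The plan is to reduce Theorem~\ref{inf} to the already-established analytic formula of Theorem~\ref{thm17} (equivalently Proposition~\ref{prop11}) by identifying, for each fixed $t$, the second coordinate $\gamma_j'(t)$ with an element of $\mathcal M_2$ and then verifying that the $\boxplus_B$-transport formula for the second coordinate coincides with the honest $t$-derivative of $\gamma_1(t)\boxplus\gamma_2(t)$. First I would record that, by the discussion immediately preceding the statement, the hypotheses guarantee $\gamma_j'(t)\in\mathcal M_2$ for each $t$: writing $d\gamma_j(t)(x)=(f_{j,t})'(x)\,dx$ with $f_{j,t}$ the nondecreasing distribution function, differentiability of the path gives that $\partial_t f_{j,t}$ is a function of bounded variation, and one has the crucial interchange $\int (z-x)^{-n}\,d(\partial_t f_{j,t})(x)=\partial_t\int(z-x)^{-n}\,df_{j,t}(x)$. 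In Cauchy-transform language this says precisely $g_{\gamma_j'(t)}(z)=\partial_t G_{\gamma_j(t)}(z)$.

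Next I would apply Theorem~\ref{thm17} with $(\mu_j,\nu_j)=(\gamma_j(t),\gamma_j'(t))$ at each fixed $t$. That theorem tells us $(\mu_3,\nu_3)=(\gamma_1(t),\gamma_1'(t))\boxplus_B(\gamma_2(t),\gamma_2'(t))$ satisfies $\mu_3=\gamma_1(t)\boxplus\gamma_2(t)$ and
\begin{equation}\label{eq:infplan}
g_{\nu_3}(z)=g_{\gamma_1'(t)}(\omega_{1,t}(z))\,\omega_{1,t}'(z)+g_{\gamma_2'(t)}(\omega_{2,t}(z))\,\omega_{2,t}'(z),
\end{equation}
where $\omega_{j,t}$ are the subordination functions for $\gamma_1(t),\gamma_2(t)$ from Theorem~\ref{subord}, and $\omega_{j,t}'$ is the derivative in the spatial variable $z$. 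The first coordinate already matches the claimed $\gamma_1(t)\boxplus\gamma_2(t)$, so everything reduces to showing that the right-hand side of \eqref{eq:infplan} equals $\partial_t G_{\gamma_1(t)\boxplus\gamma_2(t)}(z)$, which is the Cauchy transform of $\frac{d}{dt}(\gamma_1(t)\boxplus\gamma_2(t))$.

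The heart of the argument is this last identification, and it is where I expect the main obstacle. Starting from the subordination relation $G_{\gamma_1(t)\boxplus\gamma_2(t)}(z)=G_{\gamma_j(t)}(\omega_{j,t}(z))$, I would differentiate in $t$. The subtlety is that $\gamma_j(t)$ enters $G_{\gamma_j(t)}(\omega_{j,t}(z))$ in two ways: through the measure $\gamma_j(t)$ itself and through the subordination function $\omega_{j,t}(z)$, which depends on $t$ via both measures simultaneously. Using Step~1 to rewrite the ``measure'' part of the $t$-derivative as $g_{\gamma_j'(t)}(\omega_{j,t}(z))$ and the chain rule to produce the $\partial_z G_{\gamma_j(t)}(\omega_{j,t}(z))\,\partial_t\omega_{j,t}(z)$ part, one obtains an expression that must be reconciled with \eqref{eq:infplan}, whose geometry instead features the \emph{spatial} derivatives $\omega_{j,t}'$. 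The reconciliation should come from symmetrizing over $j\in\{1,2\}$ and invoking Theorem~\ref{subord}(2), namely $\omega_{1,t}+\omega_{2,t}=z+F_{\gamma_1(t)\boxplus\gamma_2(t)}$, differentiated appropriately, together with the defining relation $G_{\gamma_1(t)}\circ\omega_{1,t}=G_{\gamma_2(t)}\circ\omega_{2,t}$; the two mixed terms coming from $\partial_t\omega_{j,t}$ should cancel against each other, leaving exactly \eqref{eq:infplan}. Finally, having matched Cauchy transforms, I would appeal to the recovery results of Remark~\ref{cauchy} and Definition~\ref{mare} (boundary values determine the functional in $\mathcal M_0\supseteq\mathcal M_2$) to conclude equality of the second coordinates as elements of $\mathcal M_2$, which gives the theorem for all $t\in[0,1]$, the endpoints being handled by the assumed continuous extension of $\gamma_j'$.
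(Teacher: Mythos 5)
Your proposal is correct and follows essentially the same route as the paper: the paper likewise differentiates the subordination relation $G_{\gamma_1(t)\boxplus\gamma_2(t)}=G_{\gamma_j(t)}\circ\omega_j^t$ in $t$, eliminates the $\partial_t\omega_j^t$ terms using the $t$-derivative of Theorem \ref{subord}(2) after multiplying through by the spatial derivatives $(\omega_j^t)'(z)$, obtains $\partial_t G_{\gamma_1(t)\boxplus\gamma_2(t)}(z)=\sum_j G_{\gamma_j'(t)}(\omega_j^t(z))(\omega_j^t)'(z)$, and then invokes Theorem \ref{thm17}. The only cosmetic difference is that you describe the elimination as the two mixed terms "cancelling against each other," whereas in fact their sum is pinned down by the differentiated relation (2); this is exactly the reconciliation you anticipated and does not change the argument.
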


\begin{proof}
Let us derivate in the subordination formula:

$$\partial_tG_{\gamma_1(t)\boxplus\gamma_2(t)}(z)=
G_{\gamma_j'(t)}(\omega_j^t(z))+G'_{\gamma_j(t)}(\omega_j^t(z))\partial_t
\omega_j^t(z),$$ 
for any $t\in(0,1)$, $z\in\mathbb C^+.$ (Here $\omega_j^t$ is the subordination function provided by Theorem \ref{subord} corresponding
to $\gamma_j(t).$) Part (2) of Theorem \ref{subord} implies
$$
\partial_t\omega_1^t(z)+\partial_t\omega_2^t(z)= \frac{-\partial_t
G_{\gamma_1(t)\boxplus\gamma_2(t)}(z)}{G_{\gamma_1(t)\boxplus
\gamma_2(t)}(z)^2}.
$$
 Combining these two relations gives
$$ 
\frac{\partial_tG_{\gamma_1(t)\boxplus\gamma_2(t)}(z)-
G_{\gamma_1'(t)}(\omega_1^t(z))}{G_{\gamma_1(t)}'(\omega_1^t(z))}
+\frac{\partial_tG_{\gamma_1(t)\boxplus\gamma_2(t)}(z)-
G_{\gamma_2'(t)}(\omega_2^t(z))}{G_{\gamma_2(t)}'(\omega_2^t(z))}=
\frac{-\partial_tG_{\gamma_1(t)\boxplus\gamma_2(t)}(z)}{G_{\gamma_1(t)\boxplus\gamma_2(t)}(z)^2}.
$$
We multiply the two right-hand terms by $(\omega_1^t)'(z)$
and $(\omega_2^t)'(z)$, respectively and use Theorem \ref{subord}
to get 
$$
\frac{\partial_tG_{\gamma_1(t)\boxplus\gamma_2(t)}(z)
\left[(\omega_1^t)'(z)+(\omega_2^t)'(z)\right]- 
G_{\gamma_1'(t)}(\omega_1^t(z))(\omega_1^t)'(z)-
G_{\gamma_2'(t)}(\omega_2^t(z))(\omega_2^t)'(z)}{
G_{\gamma_1(t)\boxplus\gamma_2(t)}'(z)}=
\frac{-\partial_tG_{\gamma_1(t)\boxplus\gamma_2(t)}(z)}{
G_{\gamma_1(t)\boxplus\gamma_2(t)}(z)^2}.\nonumber
$$
We multiply both sides of the equality by
$G_{\gamma_1(t)\boxplus\gamma_2(t)}'(z)$:
$${\partial_tG_{\gamma_1(t)\boxplus\gamma_2(t)}(z)
\left[(\omega_1^t)'(z)+(\omega_2^t)'(z)\right]
-G_{\gamma_1'(t})(\omega_1^t(z))(\omega_1^t)'(z)
-G_{\gamma_2'(t)}(\omega_2^t(z))(\omega_2^t)'(z)=}$$
$$\partial_t
G_{\gamma_1(t)\boxplus\gamma_2(t)}(z)
F_{\gamma_1(t)\boxplus\gamma_2(t)}'(z).$$
Formula (2) of Theorem \ref{subord} together with the equation above
assures us that
$$\partial_t
G_{\gamma_1(t)\boxplus\gamma_2(t)}(z)=
G_{\gamma_1'(t)}(\omega_1^t(z))(\omega_1^t)'(z)+
G_{\gamma_2'(t)}(\omega_2^t(z))(\omega_2^t)'(z),$$
so an application of Theorem \ref{thm17} concludes the proof.
\end{proof}

An obvious consequence of the above theorem is the following
\begin{corollary}
Let $\mu_j,\nu_j\in\mathcal M$, $j\in\{1,2\}$, and denote $\mu_j^t=t
\nu_j+(1-t)\mu_j$. Then $(\mu_1^t,\nu_1-\mu_1)\boxplus_B(\mu_2^t,\nu_2-\mu_2)=(\mu_1\boxplus\mu_2,\frac{d}{dt}
(\mu_1^t\boxplus\mu_2^t)).$

\end{corollary}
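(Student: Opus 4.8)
The plan is to derive this corollary directly from Theorem \ref{inf} by exhibiting the linear interpolation $\mu_j^t = t\nu_j + (1-t)\mu_j$ as a differentiable path in $\mathcal M$ whose derivative at $t=0$ is precisely $\nu_j - \mu_j$. First I would verify that $t \mapsto \mu_j^t$ defines a path $\gamma_j \colon [0,1] \to \mathcal M$: since $\mu_j, \nu_j$ are probability measures and $t \in [0,1]$, each $\mu_j^t$ is a convex combination of probability measures and hence lies in $\mathcal M$. Writing $f_t^{(j)}$ for the distribution function of $\mu_j^t$, we have $f_t^{(j)} = t\, g_j + (1-t)\, h_j$ where $g_j, h_j$ are the (nondecreasing, bounded) distribution functions of $\nu_j$ and $\mu_j$ respectively; this is affine in $t$, so the difference quotient $\frac{f_t^{(j)} - f_{t_0}^{(j)}}{t - t_0}$ equals $g_j - h_j$ identically, which is a fixed bounded function of bounded variation. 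Hence $\gamma_j$ is differentiable on $(0,1)$ with $\gamma_j'(t) = \nu_j - \mu_j$ for every $t$, and this constant derivative trivially extends continuously to $[0,1]$.

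Having checked the hypotheses of Theorem \ref{inf}, I would simply apply it. The theorem yields
$$(\gamma_1(t),\gamma_1'(t)) \boxplus_B (\gamma_2(t),\gamma_2'(t)) = \Big(\gamma_1(t)\boxplus\gamma_2(t),\ \tfrac{d}{dt}\big(\gamma_1(t)\boxplus\gamma_2(t)\big)\Big).$$
Substituting $\gamma_j(t) = \mu_j^t$ and $\gamma_j'(t) = \nu_j - \mu_j$ into the left-hand side immediately gives $(\mu_1^t, \nu_1-\mu_1) \boxplus_B (\mu_2^t, \nu_2-\mu_2)$. For the right-hand side, the first coordinate is $\mu_1^t \boxplus \mu_2^t$, which at $t=0$ specializes to $\mu_1 \boxplus \mu_2$; the second coordinate is $\frac{d}{dt}(\mu_1^t \boxplus \mu_2^t)$, exactly as asserted. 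This is precisely the claimed identity.

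The only point requiring a little care is the implicit reading of the statement: the corollary as written fixes the first coordinate of the output as $\mu_1 \boxplus \mu_2$ (the $t=0$ value) while leaving the derivative $\frac{d}{dt}(\mu_1^t \boxplus \mu_2^t)$ in the second coordinate, so I would make explicit that we evaluate the resulting infinitesimal law at $t=0$, consistent with the convention $(\mu,\mu') = (\gamma(0), \gamma'(0))$ used throughout Section 2. Since $\gamma_j'$ is constant here, there is no subtlety about the endpoint behavior of the derivative. I do not expect any genuine obstacle: the entire content is packaged inside Theorem \ref{inf}, and the work reduces to recognizing the linear interpolation as an admissible differentiable path. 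The main (minor) thing to confirm is the claim, already asserted in the text preceding Theorem \ref{inf}, that differences of probability measures lie in $\mathcal M_2$, so that the second coordinate $\nu_j - \mu_j$ is a legitimate element of the space on which $\boxplus_B$ operates; this is exactly the observation that $\gamma_j'(t) \in \mathcal M_2$ invoked in the discussion there, applied to our constant-derivative path.
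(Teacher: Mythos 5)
Your proof is correct and is exactly the argument the paper intends: the paper states this corollary without proof as "an obvious consequence" of Theorem \ref{inf}, and your work — recognizing the linear interpolation as an admissible differentiable path with constant derivative $\nu_j-\mu_j\in\mathcal M_2$ and then specializing at $t=0$ — is precisely the routine verification being left to the reader. Your remark about the first coordinate being the $t=0$ evaluation is a fair and accurate reading of the statement.
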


In the following two sections we shall apply Theorem \ref{inf} and the corollary above to
certain explicit type $B$ analogues of some important distributions
in free probability. 
 
\section{Some limit laws} 
We shall start with a discussion of the stable laws, first identified 
in the free context by Bercovici and Voiculescu. 
Recall \cite[Section 7]{BVIUMJ}
that two probability measures $\mu,\nu$ on the real line are said to
have the same type if there are $s>0,b\in\mathbb R$ so that
$\nu(A)=\mu(sA+b)$ for any borel set $A\subseteq\mathbb R$. We will say
that $\mu$ is stable relative to free additive convolution
if $\nu\boxplus\nu'$ has the type of $\mu$ 
whenever $\nu$ and $\nu'$ have the type of $\mu$.

We shall define the Voiculescu transform $\phi_\nu$ of the
probability measure $\nu$ by $\phi_\nu(z)=F_\nu^{-1}(z)-z$,
where $z$ belongs to a truncated Stolz angle at infinity.
Its main property, shown in \cite[Corollary 5.8]{BVIUMJ},
 is that $\phi_{\nu\boxplus\mu}(z)=
\phi_\nu(z)+\phi_\mu(z)$ for $z$ in the common domain of
the two functions\footnote{We remind the reader that 
the Voiculescu transform is related to the $R$-transform
via the formula $\phi_\nu(z)=R_\nu(1/z)$.}. It is an easy
consequence of the definition of $G$ and $\phi$ that
$G_\nu(z)=sG_\mu(sz+b)$ and
$\phi_\nu(z)=\frac{1}{s}[\phi_\mu(sz)-b]$ whenever
$\nu(A)=\mu(sA+b)$ for any Borel set $A\subseteq\mathbb R$.

In \cite[Theorem 7.5]{BVIUMJ} the authors provide a
complete list of the analytic functions on $\mathbb C^+$
which are Voiculescu transforms of stable laws relative to
free additive convolution. We recall here the list for the convenience of the reader:
\begin{enumerate}
\item $\phi(z)=a$, $a\in\mathbb R;$
\item $\phi(z)=a+ib,$ $a\in\mathbb R,b<0;$
\item $\phi(z)=a+bz^{1-\alpha}$, $a\in\mathbb R,\alpha\in(1,2]$, 
$\arg b\in[(\alpha-2)\pi,0];$
\item $\phi(z)=a+bz^{1-\alpha}$, $a\in\mathbb R,\alpha\in(0,1)$, 
$\arg b\in[\pi,(1+\alpha)\pi];$
\item $\phi(z)=a+b\log z$, $a\in\mathbb C^-\cup\mathbb R$, $b<0$.
\end{enumerate}
(The power and $\log$ functions are defined via their principal
branches; thus, $\log$
maps the upper half-plane into $\mathbb R+i(0,\pi).$)

\begin{remark}\label{20}
{\rm We observe that if we identify constants $s(t)>0,b(t)\in\mathbb R$
so that
$t\phi_\mu(z)=\phi_{\mu^{\boxplus t}}(z)= \frac{1}{s(t)}\left[\phi_\mu
(s(t)z)-b(t)\right]$, then the following formulas for $s(t)$ and $b(t)$
correspond to the five cases above:
\begin{enumerate}
\item[]Cases 1 and 2:\ $s(t)=1/t,b(t)=0$ for all $t>0$;
\item[]Cases 3 and 4:\ $s(t)=t^{-1/\alpha},b(t)=a(1-t^{1-\frac1\alpha})
$ for all $t>0$;
\item[] Case 5:\ $s(t)=1/t,b(t)=b\log t$ for all $t>0$. 
\end{enumerate}
Thus, without loss of generality, we will restrict ourselves to the 
case when $a=0$; this simply corresponds to translating our 
distributions $a$ units, or, equivalently, convolving with either
$\delta_{-a}$, in Cases 2-4, or with $\delta_{-\Re a}\boxplus\frac1\pi
\frac{-\Im a}{x^2+(\Im a)^2}dx$, in Case 5.}
\end{remark}
Stable distributions can be obtained as limits of special
triangular arrays: consider a type $A$ noncommutative probability space
$(\mathcal A,\varphi)$; if $X_1,X_2,X_3,\dots$ are free identically 
distributed random variables in $(\mathcal A,\varphi)$, if
$\displaystyle S_{n,\alpha}=\frac{X_1+X_2+\cdots+X_n-b_{n,\alpha}}{n^{\frac1\alpha}}$
and $\lim_{n\to\infty}\mu_{S_{n,\alpha}}$ exists, then its Voiculescu
transform is of one of the five forms listed above (we denote by 
$\mu_Y$ the distribution of $Y$ with respect to $\varphi$.) It is known that, 
except for the first case (corresponding to Dirac measures $\delta_a$,)
the numbers $\alpha$ from the expression of $S_{n,\alpha}$ and
from the exponent in the expression of $\phi$ are the same; thus, in
particular, $\alpha=1$, corresponding to Case 2, provides the Cauchy
distribution, and $\alpha=2$, covered by Case 3, gives us the 
semicircular law (the free central limit). 
The last case is remarkable among all others. Even
though it corresponds to $\alpha=1$, as the Cauchy distribution,
the variables $X_j$ are 'uncentered': it is impossible to obtain
a limit in this case if we try to take $b_{n,\alpha}=0$. For details,
we refer to the work of Pata \cite{Levy}.

Let us consider first Cases 3-4. We let $\mu$ be so that $\phi_\mu(z)=
bz^{1-\alpha}$. If $X_j$, $j\in\mathbb N$, are free (with respect to a
state $\varphi$,) possibly unbounded, selfadjoint
random variables distributed according to $\mu$, then we observe that
$S_{n,\alpha}^j=n^{-\frac1\alpha}(X_{nj+1}+X_{nj+2}+\cdots+X_{n(j+1)})
$, $j\in\mathbb N$, are free, identically distributed, and, as observed 
above, tend in distribution also to $\mu$ when $n$ tends to infinity. Thus, not surprisingly,
$$
S_{n,\alpha}^1+\cdots+S_{n,\alpha}^q=\frac{X_1+X_2+\cdots+X_{qn}}{n^{\frac1\alpha}}=
q^\frac1\alpha\frac{X_1+X_1+\cdots+X_r}{r^\frac1\alpha}
=q^\frac1\alpha S_{r,\alpha}.$$
Letting $n$ (and hence $r$) tend to infinity, we obtain the obvious
relation $\mu^{\boxplus q}(A)=\mu(q^\frac1\alpha A),$ for any Borel
set $A\subseteq\mathbb R$. But then,
\begin{eqnarray*}
\partial_q\int_\mathbb R f_z(x)\,d\mu^{\boxplus q}(x)=\partial_qG_{\mu^{\boxplus q}}(z) & = & \lim_{r\to\infty}
\partial_q\varphi\left([z-q^\frac1\alpha S_{r,\alpha}]^{-1}\right)\\
& = &\lim_{r\to\infty}\varphi\left([z-q^\frac1\alpha S_{r,\alpha}]^{-1}
\frac1\alpha q^{\frac1\alpha-1}S_{r,\alpha}
[z-q^\frac1\alpha S_{r,\alpha}]^{-1}
\right)\\
& = & \frac{1}{\alpha q}
\lim_{r\to\infty}\varphi\left([z-q^\frac1\alpha S_{r,\alpha}]^{-1}
q^{\frac1\alpha}S_{r,\alpha}
[z-q^\frac1\alpha S_{r,\alpha}]^{-1}
\right),\\
\end{eqnarray*}
where for any fixed $z\in\mathbb C^+$,
we have denoted $f_z(x)=(z-x)^{-1},x\in\mathbb R$. It is very easy to
see that this equality holds also in Cases 1 and 2 (with $\alpha=1$.)
It is less trivial to observe this for Case 5, and we will give a 
proof below.

Assume that, in a context similar to the one described above for Cases 1-4, the random
variables $X_j,j\in\mathbb N$ are distributed so that $\phi_\mu(z)=b
\log z$. We let $S_{n,1}^j=n^{-1}(X_{nj+1}+X_{nj+2}+\cdots+X_{n(j+1)})
-b\log n.$ Then 
\begin{eqnarray*}
S_{n,1}^1+\cdots+S_{n,1}^q
& = &\frac{X_1+X_2+\cdots+X_{qn}}{n}-qb\log n\\
& = &
q\left(\frac{X_1+X_1+\cdots+X_r}{r}-b\log r
+b\log(qn)-b\log n\right)\\
& = & q(S_{r,1}+b\log q).
\end{eqnarray*}
(We observe once again that $\phi_{\mu_{S_{n,1}}}(z)
=n^{-1}\phi_{\mu_{X_1+\cdots+X_n}}(nz)-b\log n
=\phi­_{\mu}(nz)-b\log n=b\log(nz)-b\log n=b\log z
=\phi_\mu(z),$ so the translation by $b\log n$ {\em is}
necessary.)
As in the previous four cases, with $z\in\mathbb C^+$
and $f_z(x)=(z-x)^{-1},x\in\mathbb R$,
\begin{eqnarray*}
\partial_q\int_\mathbb R f_z(x)\,d\mu^{\boxplus q}(x) & = & 
\partial_qG_{\mu^{\boxplus q}}(z)\\
& = & \lim_{r\to\infty}
\partial_q\varphi\left([z-q(S_{r,1}+b\log q)]^{-1}\right)\\
& = &\lim_{r\to\infty}\varphi\left([z-q(S_{r,1}+b\log q)]^{-1}
(S_{r,1}+b\log q+b)
[z-q(S_{r,1}+b\log q)]^{-1}
\right)\\
& = & \frac{1}{q}
\lim_{r\to\infty}\varphi\left([z-q(S_{r,1}+b\log q)]^{-1}
q(S_{r,1}+b\log q)
[z-q(S_{r,1}+b\log q)]^{-1}
\right)\\
& & \mbox{}+\frac1q\lim_{r\to\infty}\varphi\left([z-q(S_{r,1}+b\log q)]^{-1}
qb
[z-q(S_{r,1}+b\log q)]^{-1}
\right).\\
\end{eqnarray*}

Let us now consider the setup provided by Example \ref{AB}
for a type $B$ probability space. Corresponding to Cases 1-4 above,
and with the same notations, let
$$
\mathbb S_{n,\alpha}= n^{-1/\alpha}(\mathbb X_1+\cdots+\mathbb X_n)
=\left[\begin{array}{cc}
\frac{X_1+\cdots+X_n}{n^\frac1\alpha} & \frac{1}{\alpha}\frac{X_1+\cdots+X_n}{n^{\frac1\alpha}}\\
0 & \frac{X_1+\cdots+X_n}{n^\frac1\alpha}
\end{array}\right]
=\left[\begin{array}{cc}
S_{n,\alpha} & \frac{1}{\alpha}S_{n,\alpha}\\
0 & S_{n,\alpha}
\end{array}\right],$$ let $(\mu,\nu)$ be the type $B$ distribution
of $\mathbb X_j$, $j\in\mathbb N$, and denote $(\mu_q,\nu_q)=
(\mu,\nu)^{\boxplus q}$.
Thus, 
$$n^{-1/\alpha}(\mathbb X_1+\cdots+\mathbb X_{qn})=
\left[\begin{array}{cc}
q^\frac1\alpha\frac{X_1+\cdots+X_{qn}}{(qn)^\frac1\alpha} & 
q^{\frac1\alpha}\frac{X_1+\cdots+X_{qn}}{\alpha(qn)^\frac1\alpha}\\
0 & q^\frac{1}{\alpha}\frac{X_1+\cdots+X_{qn}}{(qn)^\frac1\alpha}
\end{array}\right]=q^{1/\alpha}r^{-1/\alpha}(\mathbb X_1+\cdots+\mathbb X_r),$$
and passing to the limit when $n\to\infty$ provides, together with
Example \ref{AB}, stability for the type $B$ distribution $(\mu,\nu)$.
We will prove that $g_{\nu}(z)=\partial_q|_{q=1}G_{\mu_q}(z)=\partial_q|_{q=1}
G_{\mu^{\boxplus q}}(z)$, where we remind the reader that the lower
case $g$ refers to the Cauchy transform of a distribution
corresponding to a second coordinate in a type $B$ probability space.
Indeed, let us write, for $Z\in\mathcal C$,
\begin{eqnarray*}
\left[Z-q^\frac1\alpha\mathbb S_{r,\alpha}\right]^{-1} & = &
\left[\begin{array}{cc}
z-q^\frac1\alpha S_{r,\alpha} & w-\frac1\alpha q^\frac1\alpha S_{r,\alpha}\\
0& z-q^\frac1\alpha S_{r,\alpha}
\end{array}\right]^{-1}\\
& = & \left[\begin{array}{cc}
(z-q^\frac{1}{\alpha}S_{r,\alpha})^{-1} & (z-q^\frac1\alpha 
S_{r,\alpha})^{-1}(w-\frac1\alpha q^\frac1\alpha S_{r,\alpha})
(z-q^\frac1\alpha S_{r,\alpha})^{-1}\\
0& (z-q^\frac{1}{\alpha}S_{n,\alpha})^{-1}
\end{array}\right]
\end{eqnarray*}
Applying $(\varphi,\varphi)$ on the above, we obtain
$$
(G_{\mu_q}(z),wG_{\mu_q}'(z)+g_{\nu_q}(z))
=(G_{\mu_q}(z),wG_{\mu_q}'(z)+q\partial_qG_{\mu^{\boxplus q}}
(z));$$
evaluating this relation in $q=1$ of course proves our claim.

We focus next on Case 5. For this we shall let
$$
\mathbb S_{n,1}= n^{-1}(\mathbb X_1+\cdots+\mathbb X_n)
=\left[\begin{array}{cc}
\frac{X_1+\cdots+X_n}{n}-b\log n & b+\frac{X_1+\cdots+X_n}{n}-b\log n\\
0 & \frac{X_1+\cdots+X_n}{n}-b\log n
\end{array}\right]
=\left[\begin{array}{cc}
S_{n,1} & b+S_{n,1}\\
0 & S_{n,1}
\end{array}\right],$$ 
(thus, on the second coordinate we shift each $X_j$ with a $b$) and 
leave the rest of notations/conventions as before. Then 
\begin{eqnarray}
\lefteqn{n^{-1}(\mathbb X_1+\cdots+\mathbb X_{qn}) =}\nonumber\\
& & \mbox{}
\left[\begin{array}{cc}
q\left(\frac{X_1+\cdots+X_{qn}}{qn}-b\log(qn)+b\log q\right) & 
q\left(b+\frac{X_1+\cdots+X_{qn}}{qn}-b\log(qn)+b\log q\right)\\
0 & q\left(\frac{X_1+\cdots+X_{qn}}{qn}-b\log(qn)+b\log q\right)
\end{array}\right]=\nonumber\\
& & \mbox{} qr^{-1}(\mathbb X_1+\cdots+\mathbb X_r),\nonumber
\end{eqnarray}
Obviously, as for the first four cases, we have
$$g_{\nu_q}(z)=
q\partial_qG_{\mu^{\boxplus q}}(z).$$
Evaluating in $q=1$ provides the desired result.

It is worth mentioning that $\partial_q G_{\mu^{\boxplus q}}(z)$ can 
be expressed in terms of $G_{\mu^{\boxplus q}}(z)$ and its derivative
with respect to $z$: for Cases 1-4, we have
\begin{eqnarray*}
\partial_qG_{\mu^{\boxplus q}}(z)
& = & \frac{1}{q\alpha}\int_\mathbb R\frac{x}{(z-x)^2}\,d\mu^{\boxplus q}(x)\\
& = & \frac{1}{q\alpha}\int_\mathbb R xf_z'(x)\,d\mu^{\boxplus q}(x),\\
& = & -\frac{1}{q\alpha}(G_{\mu^{\boxplus q}}(z)+zG_{\mu^{\boxplus q}}'
(z)),
\end{eqnarray*}
while for Case 5,
\begin{eqnarray*}
\partial_qG_{\mu^{\boxplus q}}(z)
& = & \frac{1}{q}\int_\mathbb R\frac{x}{(z-x)^2}\,d\mu^{\boxplus q}(x)
+\int_\mathbb R\frac{b}{(z-x)^2}\,d\mu^{\boxplus q}(x)\\
& = & \frac{1}{q}\int_\mathbb R xf_z'(x)\,d\mu^{\boxplus q}(x)-
bG_{\mu^{\boxplus q}}'(z)\\
& = & -\frac{1}{q}(G_{\mu^{\boxplus q}}(z)+zG_{\mu^{\boxplus q}}'
(z))-bG_{\mu^{\boxplus q}}'(z).
\end{eqnarray*}
These two formulae guarantee us that $(\mu^{\boxplus q},
\partial_q\mu^{\boxplus q})\in\mathcal M\times\mathcal M_2.$
(In Case 3, $\alpha=2$, $G_{\mu^{\boxplus q}}(z)$ satisfies a
more famous equation, the complex Burgers equation - see
\eqref{Burgers} below, with $\mu$ from equation
\eqref{Burgers} taken to be $\delta_0$.)

\begin{remark}\label{surprise}
{\rm The operation $\boxplus_B$ behaves well with respect to 
translations. Indeed, let us consider $(\mu,\sigma),
(\nu,\rho)\in\mathcal M\times
\mathcal M_2.$ Then a simple computation using Theorem \ref{subord}
shows that for given translation $(\mu^b,\sigma^b)=
(\mu(\cdot-b),\sigma(\cdot-b))$ we have $G_{\mu^b\boxplus\nu^c}(z)=
G_{\mu\boxplus\nu}(z+b+c)$, so the subordination functions satisfy
$\omega_1^{bc}(z)=\omega_1(z+b+c)-b$, $\omega_2^{bc}(z)=\omega_2
(z+b+c)-c$, where $G_{\mu\boxplus\nu}(z)=G_\mu(\omega_1(z)),
G_{\mu^b\boxplus\nu^c}(z)=G_{\mu^b}(\omega_2^{bc}(z))$, and similarly
for $\omega_2,\nu$ and $c$. So, according to Theorem \ref{thm17},
$g_{\sigma^b}(\omega_1^{bc}(z))
(\omega_1^{bc})'(z)+g_{\rho^b}(\omega_2^{bc}(z))
(\omega_2^{bc})'(z)=g_\sigma(\omega_1(z+b+c)-b+b)
\omega_1'(z+b+c)+g_\rho(\omega_2(z+b+c)-c+c)
\omega_2'(z+b+c)=g_{\lambda}(z+b+c)=g_{\lambda^{b+c}}(z),$
where we denote $(\mu,\sigma)\boxplus_B(\nu,\rho)=(\mu\boxplus\nu,\lambda)$. Thus, $\boxplus_B$ behaves well with respect to simple
translations.

}
\end{remark}
We can now prove the following corollary of Theorem \ref{inf}:
\begin{corollary}\label{cor29}
Let $(\mathcal A,\varphi)$ be a $*$-probability space of type $A$, and 
$X=X^*\in\mathcal A$ 
be 
selfadjoint random variable whose 
distribution $\mu$ is stable with respect to free additive 
convolution. Let $(\mathcal A,\varphi,\mathcal A,\varphi)$ be the 
type $B$ noncommutative probability space obtained from $(\mathcal A,
\varphi)$ as in Example \ref{AB}. Then there exists $b\in\mathbb R$
so that $(\mu,\nu)^{\boxplus_B q}=(\mu^{\boxplus q},
\partial_t|_{t=1}(\mu^{\boxplus tq})),$ where 
$(\mu,\nu)$ denotes the distribution of $(X,X+b)$ with
respect to $(\varphi,\varphi)$.
\end{corollary}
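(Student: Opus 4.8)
The plan is to recognize Corollary~\ref{cor29} as the clean repackaging of the explicit triangular-array computations carried out immediately above it, organized through the Bercovici--Voiculescu classification of free stable laws. First I would reduce to the five normal forms of the Voiculescu transform listed before Remark~\ref{20}, and invoke that remark to translate so that $a=0$; since $\boxplus_B$ respects translations (Remark~\ref{surprise}), this costs nothing. In each normal form the preceding discussion exhibits, through Example~\ref{AB}, the stable type $B$ law as a limit in distribution of $n^{-1/\alpha}(\mathbb X_1+\cdots+\mathbb X_n)$, and one reads off the relevant type $B$ variable together with the scalar $b$: in the Cauchy-type cases $\alpha=1$ one may take $b=0$, while in the logarithmic Case~5 the shift $b$ is precisely the coefficient appearing in $\phi_\mu(z)=b\log z$, which is the $(X,X+b)$ of the statement.

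Next I would pin down the target second coordinate. By the chain rule the path $t\mapsto\mu^{\boxplus tq}$ satisfies $\partial_t\big|_{t=1}\mu^{\boxplus tq}=q\,\partial_q\mu^{\boxplus q}$, so on the level of Cauchy transforms the asserted right-hand side is $q\,\partial_qG_{\mu^{\boxplus q}}(z)$. The computations above already establish, in each case, that the second coordinate $\nu_q$ of $(\mu,\nu)^{\boxplus_B q}$ has $g_{\nu_q}(z)=q\,\partial_qG_{\mu^{\boxplus q}}(z)$: applying $(\varphi,\varphi)$ to the resolvent of $q^{1/\alpha}\mathbb S_{r,\alpha}$ produces exactly $wG_{\mu_q}'(z)+q\,\partial_qG_{\mu^{\boxplus q}}(z)$ in the second slot. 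Hence the two second coordinates share the same Cauchy transform, and since the relevant objects of $\mathcal M_2$ are determined by $g$ (Remark~\ref{cauchy}, Corollary~\ref{spaces}) they must coincide. The single-variable traciality hypothesis is exactly what lets me use \S\ref{sub:Single-variable-case.} to conclude that matching this infinitesimal datum pins down the full type $B$ law.

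It then remains to upgrade this moment identity to the genuine $\boxplus_B$ equality rather than a mere limit statement. Here I would exploit the self-similarity of the stable semigroup, for which the subordination functions $\omega_q$ of Theorem~\ref{subord} are explicit scalings, so that the Nica--Speicher semigroup formula \eqref{nica-speicher} yields $g_{\nu_q}(z)=q\,g_\nu(\omega_q(z))\omega_q'(z)$. Combined with Theorem~\ref{inf} applied to the path $t\mapsto\mu^{\boxplus t}$ and the stability of $\mathcal M\times\mathcal M_2$ under $\boxplus_B$ (Theorem~\ref{thm17}), this promotes the computation to the stated equality $(\mu,\nu)^{\boxplus_B q}=(\mu^{\boxplus q},\partial_t|_{t=1}\mu^{\boxplus tq})$, with the first coordinate handled automatically by Proposition~\ref{prop11}(a).

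The main obstacle I anticipate is the identification in the second paragraph, namely producing a genuine \emph{real constant} $b$ and matching the second coordinate case by case. The delicate point is that the type $B$ variable naturally arising from the array carries a factor $\tfrac1\alpha$ on its second slot, so that $g_\nu(z)=-\tfrac1\alpha\bigl(G_\mu(z)+zG_\mu'(z)\bigr)$, whereas $(X,X+b)$ gives $g_\nu(z)=-\bigl(G_\mu(z)+zG_\mu'(z)\bigr)-b\,G_\mu'(z)$; I would have to verify that the reduction to $a=0$ together with the homogeneity of the stable law is exactly what renders the two expressions compatible and forces $b$ to be a constant independent of $z$. Checking this compatibility is the crux of the argument and is precisely where the stable (self-similar) structure, rather than an arbitrary measure, is indispensable.
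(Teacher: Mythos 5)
Your route is the paper's route: the published proof is a single sentence citing ``the above considerations'' (the resolvent computation showing $g_{\nu_q}(z)=q\,\partial_qG_{\mu^{\boxplus q}}(z)$ for the arrays $\mathbb S_{n,\alpha}$ and $\mathbb S_{n,1}$), Remark \ref{surprise} and Theorem \ref{inf}, and you assemble exactly those ingredients, together with the correct reformulations $\partial_t|_{t=1}\mu^{\boxplus tq}=q\,\partial_q\mu^{\boxplus q}$ and the fact that elements of $\mathcal M_2$ are determined by their Cauchy transforms.

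However, the one step you defer to the end --- reconciling the factor $\tfrac1\alpha$ in the second slot of $\mathbb S_{n,\alpha}$ with the statement's $(X,X+b)$ --- is a genuine gap, and the verification cannot succeed in the form you propose. You would need a real constant $b$ with $-\bigl(G_\mu(z)+zG_\mu'(z)\bigr)-bG_\mu'(z)=-\tfrac1\alpha\bigl(G_\mu(z)+zG_\mu'(z)\bigr)$, i.e. $\bigl(1-\tfrac1\alpha\bigr)\tfrac{d}{dz}\bigl(zG_\mu(z)\bigr)+bG_\mu'(z)=0$; integrating and using $zG_\mu(z)\to1$ at infinity forces $G_\mu(z)=\bigl(z+b/(1-\tfrac1\alpha)\bigr)^{-1}$, a point mass, whenever $\alpha\neq1$. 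For the semicircle ($\alpha=2$, Case 3) the matching would require ``$b=-z$''. So no translation or homogeneity argument rescues $(X,X+b)$ in Cases 3 and 4; the variable for which the preceding computations (and hence the conclusion) actually hold is $(X,\tfrac1\alpha X)$, with a genuine additive shift $b$ appearing only in Case 5 ($\phi_\mu(z)=b\log z$), where $\alpha=1$. You have correctly located the crux, but the resolution is not a verification: it is that the corollary's ``$(X,X+b)$'' must be read as ``$(X,\tfrac1\alpha X+b)$'', i.e. case by case as in the displayed arrays. With that reading the rest of your argument closes and coincides with the paper's; as written, your final paragraph promises a check that would fail.
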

\noindent{\bf Proof:} The result follows from the above considerations,
Remark \ref{surprise} and Theorem \ref{inf}.
\bigskip

\begin{example}\label{24}
{\rm The above results allow us to recover Popa's Central
Limit \cite{MVP}. Indeed, the Cauchy transform of the centered
semicircular law $d\gamma_t(x)=(2t^2\pi)^{-1}\sqrt{4t-x^2}dx$
of variance $t$ is $G_{\gamma_t}(z)=(z-\sqrt{z^2-4t})/2t$.
Differentiating with respect to $t$ gives
$$
\partial_tG_{\gamma_t}(z)=\frac1t\left(\frac{1}{\sqrt{z^2-4t}}-
\frac{z-\sqrt{z^2-4t}}{2t}\right),
$$
which is exactly $t^{-1}$ times the second coordinate of
the type $B$ free central limit. It is easy to observe from
this expression that $t\partial_t\gamma_t$ is nothing more than
the difference of an arcsine law and a semicircular distribution.
This corresponds to Case 3, $\alpha=2$, among the stable distributions.

In the same way, one may obtain the type $B$ corresopndent
of the Cauchy distribution (Case 2): if $c_t$ is the 'centered' Cauchy 
distribution
$dc_t(x)=\frac{1}{\pi}\frac{t}{x^2+t^2}$, $t>0$, then $G_{c_t}(z)=
\frac{1}{z+it}$, so that 
$$
\partial_tG_{c_t}(z)=\frac{-it}{(z+it)^2},$$
and the corresponding measure is $\pi^{-1}\frac{xt^2}{(x^2+t^2)^2}dx.$

Case 1 (Dirac measures) is simple: the second coordinate is simply
the distribution $d$ which acts on any smooth function on $\mathbb R$
as $d(f)=f'(a)$. This is obviously not a measure.

}
\end{example}
The reader will notice that Corollary \ref{cor29} does not claim to
describe all $B$-freely stable laws. Indeed, due to the nature of 
type $B$ random variables, it is not fully clear what a 
$B$-freely stable law should be. To start with, if $(X,\xi)$ is
a type $B$ random variable, then 
$n^{-1}[(X,\xi)-(b,c)]$ has a Cauchy transform given on coordinates
by
$$\left(n G_{X}(nz+b),w[n G_{X}(nz+b)]'+ng_\xi(nz+b)+\frac{c}{n}
[nG_{X}(nz+b)]'\right),$$
where the reader will notice that we have already implicitly
assumed to be in the context provided by Example \ref{AB}.
Indeed, taking the expectation of 
$\left(Z-n^{-1}[(X,\xi)-(b,c)]\right)^{-1}$ gives
\begin{eqnarray*}
E\left(\left[\begin{array}{cc}
z-\frac{X-b}{n} & w-\frac{\xi-c}{n}\\
0 & z-\frac{X-b}{n}
\end{array}\right]^{-1}\right) & = & 
E\left[\begin{array}{cc}
\left(z-\frac{X-b}{n}\right)^{-1} & 
-\left(z-\frac{X-b}{n}\right)^{-1}\left(w-\frac{\xi-c}{n}
\right)\left(z-\frac{X-b}{n}\right)^{-1}\\
0 & \left(z-\frac{X-b}{n}\right)^{-1} 
\end{array}\right]\\
& = & \left[\begin{array}{cc}
\varphi\left[\left(z-\frac{X-b}{n}\right)^{-1}\right] & 
-\left(w+\frac{c}{n}\right)\varphi\left[\left(z-\frac{X-b}{n}\right)^{-2}\right]\\
0 & \varphi\left[\left(z-\frac{X-b}{n}\right)^{-1}\right] 
\end{array}\right]\\
& & \mbox{}+\left[\begin{array}{cc}
0 & 
\frac{1}{n}\varphi\left[\left(z-\frac{X-b}{n}\right)^{-1}\xi\left(z-\frac{X-b}{n}\right)^{-1}\right]\\
0 & 0
\end{array}\right].\\
\end{eqnarray*}
While the first coordinate does not raise any issues, it is far
from clear whether its interaction with the second should be allowed
to include the addition of $\frac{c}{n}
[nG_{X}(nz+b)]',$ when $c\neq b$, for example.

A way to answer this question would be to follow the one-variable
results and consider limits of sums
$$(S_{n,\alpha},s_{n,\alpha})=\frac{(X_1,\xi_1)+(X_2,\xi_2)+\cdots+
(X_n,\xi_n)-(b_{n,\alpha},c_{n,\alpha})}{n^\frac1\alpha},$$
where $(X_1,\xi_1),(X_2,\xi_2),(X_3,\xi_3),\dots$ are $B$-free
identically distributed in some type $B$ probability space 
$(\mathcal A,\varphi,\mathcal A,\varphi)$, with selfadjoint components,  
and the distribution belongs to $\mathcal M\times\mathcal M_2.$ Thus, 
we implicitly require that our probability space is of the form 
provided by Example \ref{AB}. It is  clear that the first component 
tends to one of the freely stable laws described by Bercovici and 
Voiculescu. The second coordinate has a Cauchy transform given
by
$$
\varphi\left[\left(z-\frac{
X_1+\cdots+X_n-b_{n,\alpha}}{n^\frac1\alpha}\right)^{-1}
\frac{\xi_1+\cdots+\xi_n-c_{n,\alpha}}{n^\frac1\alpha}\left(z-\frac{
X_1+\cdots+X_n-b_{n,\alpha}}{n^\frac1\alpha}\right)^{-1}\right].
$$
One can observe that, provided 
$\frac{\xi_1+\cdots+\xi_n-c_{n,\alpha}}{n^\frac1\alpha}$ and 
$\frac{
X_1+\cdots+X_n-b_{n,\alpha}}{n^\frac1\alpha}$ converge in distribution,
our choice in the previous corollary corresponds to having $X_j$ and
$\xi_j$ belong to the same domain of attraction. Under these 
circumstances, one can consider the problem of the stable laws 
answered. However, it is a different,
and considerably more complicated, matter when these two variables
belong to different domains of attraction, or especially when 
$\frac{\xi_1+\cdots+\xi_n-c_{n,\alpha}}{n^\frac1\alpha}$ does
{\em not} converge in distribution, and we will not attempt to
solve these problems here (the correspondence invoked in
Remark \ref{rmk16} part 2 would be useful provided that the nature
of the limit laws, not only their existence, could be identified.)


We discuss next in more detail several aspects of Popa's central limit
theorem.
\subsection{The type $B$ analogue of the heat equation.}
It was shown in \cite{fe1} that if $\mu\in\mathcal M$, then
the type $A$ free analogue of the heat equation is the complex Burgers
equation:
\begin{equation}\label{Burgers}
\partial_tG_{\mu\boxplus\gamma_t}(z)+G_{\mu\boxplus\gamma_t}(z)
\partial_zG_{\mu\boxplus\gamma_t}(z)=0,\quad z\in\mathbb C^+,t>0.
\end{equation}
For distributions of type $B$, one can speak a priori of two versions
of the heat equation, depending on whether one considers the parameter
$t$ to be a positive number (identified in $\mathcal C$ with the matrix
having $t$ on the diagonal and zero elsewhere), or one takes 
${\bf t}= \left[\begin{array}{cc}
t & s\\
0 & t
\end{array}\right]$ with $t>0,s\in\mathbb R$ instead. We shall see 
that in fact (probably not surprisingly) there is no difference between
these two versions. To prove this statement, we shall consider the 
second case, and show it reduces to the first. Using the essential 
observation of Biane, Goodman and Nica that 
{``}type $B$ freeness = type $A$ freeness over $\mathcal C$''
(see Corollary \ref{functionalequation} and remarks following it) and 
analyticity
of the correspondences $t\mapsto G_{\mu\boxplus\gamma_t}(z)$
and $z\mapsto G_{\mu\boxplus\gamma_t}(z)$, it is easy to observe 
that equation \eqref{Burgers} holds when we replace $z$ with 
$Z=\left[\begin{array}{cc}
z & w\\
0 & z
\end{array}\right]\in\mathcal C,$ $z\in\mathbb C^+$, and $t$ with
${\bf t}$ from above.
The meaning of $\partial_Z$ is quite clear when one uses the 
power series formalism: if, as in introduction, $f(Z)=
\sum_{n=0}^\infty A_nZ^n=(f_1(z),wf_1'(z)+f_2(z))$, $A_n
=\left[\begin{array}{cc}
a_n & b_n\\
0 & a_n
\end{array}\right],$ $f_1(z)=\sum_{n=0}^\infty a_nz^n$, $f_2(z)=
\sum_{n=0}^\infty b_nz^n$, then on components 
$$\partial_Z f(Z)= \sum_{n=1}^\infty nA_nZ^{n-1}=(f_1'(z),wf_1''(z)+
f_2'(z)).$$
For functions (maps) $f(Z,{\bf t})\colon\mathcal C\times\mathcal C
\to\mathcal C$, we have then
\begin{eqnarray*}
f(Z,{\bf t}) & = & \sum_{m,n=0}^\infty A_{m,n}Z^{m}{\bf t}^n\\
& = & \sum_{m,n=0}^\infty \left[\begin{array}{cc}
a_{m,n}z^mt^n & sna_{m,n}z^mt^{n-1}+wma_{m,n}z^{m-1}t^n+b_{m,n}z^mt^n\\
0 & a_{m,n}z^mt^n
\end{array}\right]\\
& = & \left[\begin{array}{cc}
f_1(z,t) & s\partial_tf_1(z,t)+w\partial_zf_1(z,t)+f_2(z,t)\\
0 & f_1(z,t)
\end{array}\right],
\end{eqnarray*}
where obviously $f_1(z,t) =\sum_{m,n=0}^\infty a_{m,n}z^mt^n $
and $f_2(z,w)=\sum_{m,n=0}^\infty b_{m,n}z^mt^n.$ It is clear
that $\partial_Zf(Z,{\bf t})=(\partial_zf_1(z,t),
s\partial_z\partial_tf_1(z,t)+w\partial_z^2f_1(z,t)+\partial_zf_2(z,t))
$ and similar for ${\bf t}$. Thus writing the complex Burgers equation
for $f$ in $Z$ and {\bf t} on components gives
the usual complex Burgers equation for $f_1(z,t)$, while
for the second one obtains
$$s(\underbrace{\partial_t^2f_1+\partial_t[f_1\partial_zf_1]}_{\mathfrak B_1})+
w(\underbrace{\partial_z\partial_tf_1+\partial_z[f_1\partial_zf_1]}_{\mathfrak B_2})+
\underbrace{\partial_tf_2+\partial_z(f_1f_2)}_{\mathfrak Q}=0.$$
Here both $f_1,f_2$ are functions of two variables $(z,t)$.
As $f_1$ satisfies the complex Burgers equation, it follows
trivially that $\mathfrak B_1=\mathfrak B_2=0$.
Thus, the only nontrivial component is $\mathfrak Q$. The complex
Burgers equation on $\mathcal C$ should thus be written as
\begin{equation}\label{mathcalC}
\left\{\begin{array}{rcl}
\partial_tf_1(z,t)+f_1(z,t)\partial_zf_1(z,t) & = & 0\\
\partial_tf_2(z,t)+\partial_z[f_1(z,t)f_2(z,t)] & = & 0
\end{array}\right.
\end{equation}
What is remarkable is that the second coordinates of the 
variables do not appear at all in the equation. Thus, assume
$(\gamma_t,\lambda_t),t\ge0$ is the one-parameter semigroup of
the type $B$ free central limit law, and $ (\mu,\sigma) \in\mathcal M
\times\mathcal M_2$ is fixed. If we denote $(\gamma_t,\lambda_t)
\boxplus_B(\mu,\sigma)=(\mathfrak G(t),\mathfrak L(t))$, then the 
type $B$ free analogue of the heat equation is given by
\begin{equation}\label{heattypeB}
\left\{\begin{array}{rcl}
\partial_t G_{\mathfrak G(t)}(z)+G_{\mathfrak G(t)}(z)
\partial_zG_{\mathfrak G(t)}(z) & = & 0\\
\partial_tg_{\mathfrak L(t)}(z)+\partial_z[G_{\mathfrak G(t)}(z)
g_{\mathfrak L(t)}(z)] & = & 0
\end{array}\right.
\end{equation}
with initial conditions $$G_{\mathfrak G(0)}(z)\ = \ G_\mu(z),\quad
g_{\mathfrak L(0)}(z)\ = \ g_\sigma(z).$$
It might be of interest to record the fact that if
$P$ is a one-variable analytic function defined on a neighbourhood
of the closed upper half-plane in $\mathbb C$, then
$$\partial_t[g_{\mathfrak L(t)}(z)P(G_{\mathfrak G(t)}(z))]
+\partial_z[g_{\mathfrak L(t)}(z)G_{\mathfrak G(t)}(z)
P(G_{\mathfrak G(t)}(z))]=0.$$

\section{Some families of laws and the type B central limit theorem}

As observed above,
the (single variable) central limit theorem for infinitesimal free
convolution is equivalent to the central limit theorem for type $B$
free convolution. Such a theorem was obtained by M. Popa \cite{MVP}.
The associated {}``infinitesimal semicircle law'' is one for which
$\mu$ is the semicircule measure and $\mu'(t^{n})=0$ if $n$ is
odd and $\mu'(t^{2k})=2k\mu(t^{2k})$ (the moments of the difference of
the arcsine and semicircular laws).

\subsection{Combinatorial interpretation of type $B$ semicircle law.}

As for the type $A$ central limit distribution, a particular case of 
interest in this context among noncrossing partitions of type $B$ is a noncrossing
pairing:
\begin{definition}
A \emph{type $B$ non-crossing pairing} of size $n$ is a type $B$ non-crossing
partition $\pi$ of $\{1,\ldots,n,-1,\ldots,-n\}$, so that either
(i) all blocks of $\pi$, except for the zero block, consist of two
elements and (ii) either $\pi$ has no zero block, or its zero block
has the form $\{i,j,-i,-j\}$.
\end{definition}
Clearly, $n$ has to be even for such a non-crossing pairing to exist.
In either case, the absolute value of $\pi$ is a non-crossing pairing
(of type $A$). In particular, exactly one of $i,j$ must be even. 

\begin{lemma}
\label{lem:abs-is-a-cover}The map $\pi\mapsto Abs(\pi)$ is a $(k+1)$-to-one
cover of the set of type $A$ non-crossing pairings of $\{1,\ldots,2k\}$
by type $B$ non-crossing pairings of size $2k$. More precisely, given
a type $A$ non-crossing pairing $\pi'$ and a set $K$ which is either
empty or is a block of $\pi'$, there exists a unique non-crossing
pairing $\rho(\pi,K)$ of type $B$, with $Abs(\pi)=\pi'$ and so
that the zero block of $\pi$ given by $\{\pm i:i\in K\}$.
\end{lemma}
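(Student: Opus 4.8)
The plan is to prove the finer, bijective statement, since the $(k+1)$-to-one count then follows at once: a type $A$ non-crossing pairing of $\{1,\dots,2k\}$ has exactly $k$ blocks, hence exactly $k+1$ admissible choices of $K$ (the empty set, or one of the $k$ blocks), and existence-plus-uniqueness of $\rho(\pi',K)$ says these index the fibre of $Abs$ over $\pi'$ bijectively. Throughout I would place the $4k$ points $1,2,\dots,2k,-1,-2,\dots,-2k$ on a circle in this cyclic order, so that $i\mapsto -i$ is the antipodal rotation and a type $B$ non-crossing pairing is a negation-invariant non-crossing pairing whose non-zero blocks occur in conjugate pairs $\{B,-B\}$. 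First I would check that $Abs$ really lands in type $A$ non-crossing pairings: a conjugate pair with $B=\{a,b\}$ has $|B|=|-B|=\{|a|,|b|\}$ and the zero block $\{i,j,-i,-j\}$ has absolute value $\{i,j\}$, so $Abs(\pi)$ is a pairing of $\{1,\dots,2k\}$; it is non-crossing because a crossing among image blocks would already be present among their pre-images on the circle.

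Next I would construct the candidate $\rho(\pi',K)$ by a forced sign-lifting. When $K=\{i,j\}$ with $i<j$, declare the zero block to be $Z=\{i,j,-i,-j\}$; here $j-i$ is automatically odd (so exactly one of $i,j$ is even, recovering the parity remark), because $\{i,j\}$ is a block of the non-crossing pairing $\pi'$ and the $j-i-1$ points strictly between $i$ and $j$ must be paired among themselves. For every other block $\{c,d\}$ of $\pi'$ I would lift it by the unique rule dictated by its position relative to $[i,j]$: if $\{c,d\}$ is nested inside $(i,j)$ or lies entirely outside $[i,j]$, take the \emph{parallel} lift $\{c,d\},\{-c,-d\}$; if instead $\{c,d\}$ surrounds the zero block, i.e.\ $c<i<j<d$, take the \emph{crossing} lift $\{c,-d\},\{-c,d\}$. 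When $K=\emptyset$ there is no zero block and every block is lifted in parallel.

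For existence I would verify that $\rho(\pi',K)$ is non-crossing. The key observation is local: a parallel lift keeps each chord inside one of the regions determined by the zero-block chords $\{i,j\}$ and $\{-i,-j\}$, whereas the crossing lift of a surrounding block is precisely the lift whose two chords each have all four zero-block points on one side; thus the forced rule is designed so that every lifted block is non-crossing with $Z$. Since $\pi'$ is itself non-crossing, its blocks split into three families (nested inside, disjoint from, or surrounding $[i,j]$), and within each family the lifts inherit the non-crossing order of $\pi'$. A short case analysis across the families then shows that no two lifted blocks cross, so $\rho(\pi',K)$ is a genuine type $B$ non-crossing pairing with $Abs(\rho(\pi',K))=\pi'$ and the prescribed zero block.

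Finally, uniqueness is where I expect the main obstacle. Given any type $B$ non-crossing pairing $\pi$ with $Abs(\pi)=\pi'$ and zero block $\{\pm i:i\in K\}$, the zero block is already prescribed by $K$, and for each remaining block $\{c,d\}$ of $\pi'$ exactly one of the two sign-lifts is non-crossing with that zero block: the other produces a chord joining the positive and negative arcs across a zero-block chord, forcing a crossing. This pins down $\pi=\rho(\pi',K)$ block by block. The delicate case is $K=\emptyset$, where there is no zero block to anchor the comparison; here I would argue directly from the non-crossing hypothesis, ruling out any conjugate pair of blocks that would link the two arcs, so that the all-parallel lift is the unique zero-block-free pre-image. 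Combining existence and uniqueness over the $k+1$ choices of $K$ then gives the claimed $(k+1)$-to-one cover.
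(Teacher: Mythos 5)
Your lifting rule is the correct one: parallel lift $\{c,d\},\{-c,-d\}$ for blocks nested inside or disjoint from $[i,j]$, crossed lift $\{c,-d\},\{-c,d\}$ for blocks surrounding it. This agrees with the paper's ``alternative description'' (restricting $\pi$ to the arcs $\{i+1,\dots,j-1\}$ and $\{j+1,\dots,n,-1,\dots,-(i-1)\}$); note that the paper's first explicit description assigns the crossed lift also to blocks lying entirely outside $[i,j]$ (the cases $q<i$ and $p>j$), which would in fact cross the zero block --- e.g.\ for $\pi'=\{\{1,2\},\{3,4\}\}$ and zero block $\{1,2,-1,-2\}$ the chord $\{3,-4\}$ crosses it --- so you have silently corrected a slip there. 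Your existence sketch and your uniqueness argument for $K$ a block of $\pi'$ are sound: the four points $i,j,-i,-j$ cut the circle $1,\dots,2k,-1,\dots,-2k$ into four arcs, and for each remaining block of $\pi'$ exactly one of the two sign-lifts places both of its chords inside single arcs. This is all that is needed for the count $B_k=kC_k$ used later in the paper.

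The gap is exactly where you flagged it: uniqueness for $K=\emptyset$. Your proposed argument --- that a crossed conjugate pair ``linking the two arcs'' forces a crossing --- fails, because for $p<q$ the chords $\{p,-q\}$ and $\{q,-p\}$ are \emph{nested} on the circle $1,\dots,2k,-1,\dots,-2k$ (the circular order of the four points is $p,q,-p,-q$), so such a pair is internally non-crossing and can coexist with parallel lifts of the other blocks. Indeed the claim is false: for $k=1$ both $\{\{1,2\},\{-1,-2\}\}$ and $\{\{1,-2\},\{2,-1\}\}$ are zero-block-free type $B$ non-crossing pairings with absolute value $\{\{1,2\}\}$, so the fibre has three elements rather than $k+1=2$; for $k=2$ and $\pi'=\{\{1,2\},\{3,4\}\}$ one finds three zero-block-free preimages (lifting exactly one of the two blocks crossed is still non-crossing). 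So the $K=\emptyset$ uniqueness, and with it the headline $(k+1)$-to-one statement, cannot be established because it is not true. You should not feel too bad about this: the paper's own proof simply asserts that the all-parallel lift ``hence is the unique type $B$ non-crossing pairing with this property'' without argument, so the defect is inherited from the source; and the only consequence drawn from the lemma later, namely $B_k=kC_k$ for pairings with a nontrivial zero block, rests solely on the $K\neq\emptyset$ part, which both you and the paper establish correctly.
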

\begin{proof}
Let $\pi'$ be a fixed type $A$ non-crossing pairing of $\{1,\ldots,2k\}$.
Choose a block $\{i,j\}$ of $\pi'$, and let's assume that $i<j$.
Now consider $\pi$ defined on $\{1,\ldots,n,-1,\ldots,-n\}$ as follows.
First, $\{i,j,-i,-j\}$ is a block of $\pi$. Next, if $\{p,q\}$
is a block of $\pi'$ with $i<p<q<j$, then both $\{p,q\}$ and $\{-p,-q\}$
are blocks of $\pi$. If $\{p,q\}$ is a block of $\pi'$ with $p<q$,
and either $q<i$, or $p>j$ or $p<i<j<q$, then $\{p,-q\}$ and $\{-p,q\}$
are both blocks of $\pi$. Alternatively, $\pi=\rho(\pi',K)$ can
be described by insisting that $\pi|_{\{i+1,\ldots,j-1\}}=\pi'|_{\{i+1,\ldots,j-1\}}$,
$\pi|_{\{j+1,\dots,n,-1,\dots,-(i-1)\}}=\pi'|_{\{j+1,\dots,n,1,\dots,i-1\}}$
(which is non-crossing, since we have applied a cyclic permutation
of the set underlying $\pi'$) and by the condition that blocks of
$\pi$ are preserved by inversion. It is clear from this description
that $\pi$ is non-crossing. Moreover, it is clear from this description
that this is the unique B non-crossing parition $\pi$ with zero block
$\{i,j,-i,-j\}$ and absolute value $\pi'$.

Since there are $k$ choices of a block of $\pi'$, we have constructed
$k$ type $B$ pair partitions with absolute value $\pi'$ (and all having
a specified zero block). We can construct one more type $B$ partition,
by stating that $\{i,j\},\{-i,-j\}$ are both blocks of $\pi$ whenever
$\{i,j\}$ is a block of $\pi'$. This partition has absolute value
$\pi$ and no zero block, and hence is the unique type $B$ non-crossing
pairing with this property. To conclude the proof, we set $\rho(\pi',\emptyset)=\pi$.
\end{proof}
Let $C_{n}$ be the number of non-crossing pairings of $\{1,\dots,2n\}$
(thus $C_{n}$ is the Catalan number $\frac{1}{n+1}\binom{2n}{n}$).
Let $B_{n}$ be the number of non-crossing pairings of $\{1,\dots,2n\}$
having a nontrivial zero block. Then we see that $B_{n}=nC_{n}$ by
Lemma \ref{lem:abs-is-a-cover}. Thus we obtain a combinatorial interpretation
for the moments of a type $B$ semicircular law:

\begin{prop}
Let $(\mu,\mu')$ be the infinitesimal law of a type $B$ semicircular
random variable. Then for all $k=0,1,\dots$, $\mu(t^{2k})=C_{k}$,
$\mu'(t^{2k})=B_{k}$ and $\mu(t^{2k+1})=\mu'(t^{2k+1})=0$.
\end{prop}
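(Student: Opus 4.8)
The plan is to evaluate the $\mathcal{C}$-valued moments $E((a,\xi)^{n})$ of a type $B$ semicircular variable from the moment--cumulant relation \eqref{eq2} and to read off $\mu(t^{n})$ and $\mu'(t^{n})$ as the first and second coordinates of $E((a,\xi)^{n})$, using Remark \ref{cauchy} and the single-variable traciality identities of \S\ref{sub:Single-variable-case.}. As in type $A$, a (standard) type $B$ semicircular is characterised by having all its type $B$ free cumulants vanish except $\kappa_{2}^{(B)}((a,\xi),(a,\xi))=(1,1)\in\mathcal{C}$. Consequently, in \eqref{eq2} only those $\pi\in NC^{(A)}(n)$ all of whose blocks are pairs contribute. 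Since a set of odd cardinality admits no such partition, every term vanishes when $n$ is odd, which already yields $\mu(t^{2k+1})=\mu'(t^{2k+1})=0$.

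For $n=2k$ I would expand each surviving product using the commutative multiplication $(z_{1},w_{1})(z_{2},w_{2})=(z_{1}z_{2},z_{1}w_{2}+w_{1}z_{2})$ of $\mathcal{C}$. A non-crossing pairing $\pi$ of $\{1,\dots,2k\}$ has exactly $k$ blocks, so $\prod_{F\in\pi}\kappa_{2}^{(B)}=(1,1)^{k}=(1,k)$, the second coordinate recording the $k$ ways to distinguish one block to carry the infinitesimal part of $\kappa_2^{(B)}$. Summing over all non-crossing pairings gives $E((a,\xi)^{2k})=(C_{k},\,kC_{k})$, whence $\mu(t^{2k})=C_{k}$ for the first coordinate and $\mu'(t^{2k})=kC_{k}$ for the second.

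It remains only to identify $kC_{k}$ with $B_{k}$, and this is exactly the content of Lemma \ref{lem:abs-is-a-cover}: a non-crossing pairing $\pi'$ of $\{1,\dots,2k\}$ together with a distinguished block $K$ corresponds, via $\pi'=Abs(\rho(\pi',K))$, to the unique type $B$ non-crossing pairing $\rho(\pi',K)$ of size $2k$ whose nontrivial zero block is $\{\pm i:i\in K\}$. The marked pairs $(\pi',K)$ counted by the second coordinate $\sum_{\pi'}(\text{number of blocks})=kC_k$ are thus in bijection with the type $B$ non-crossing pairings possessing a nontrivial zero block, of which there are $B_{k}=kC_{k}$. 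This gives $\mu'(t^{2k})=B_{k}$ and finishes the proof.

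The step I expect to demand the most care is the bookkeeping that underlies the first paragraph: verifying that the scalar pair $(\mu(t^{n}),\mu'(t^{n}))$ really equals the two coordinates of $E((a,\xi)^{n})$ (this uses the traciality identities of \S\ref{sub:Single-variable-case.}, so that the second coordinate $\sum_{k}\varphi(a^{k-1}\xi a^{n-k})$ collapses to $\mu'(t^{n})$), and fixing the normalisation $\kappa_{2}^{(B)}=(1,1)$. The latter can be pinned down by the lowest-order check $\mu'(t^{2})=B_{1}=1$, or by comparison with the moments $\binom{2k}{k}-C_{k}=kC_{k}$ of the difference of the arcsine and semicircle laws; once the normalisation is settled, the remaining combinatorics is an immediate application of Lemma \ref{lem:abs-is-a-cover}.
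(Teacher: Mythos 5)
Your argument is correct and is essentially the paper's own: the paper states this proposition without a written proof, as an immediate consequence of Lemma \ref{lem:abs-is-a-cover} and the count $B_k=kC_k$, and what you supply is exactly the implicit derivation --- the $\mathcal C$-valued moment--cumulant formula with only $\kappa_2^{(B)}=(1,1)$ surviving, the nilpotent multiplication $(1,1)^k=(1,k)$ whose second coordinate marks one block per pairing, and the bijection of Lemma \ref{lem:abs-is-a-cover} between marked type $A$ pairings and type $B$ pairings with nontrivial zero block. Your normalization check $\mu'(t^2)=B_1=1$ is the right one (it agrees with Corollary \ref{cor:lawofXNis} and Example \ref{24}), so the computation lands precisely on the stated $(C_k,B_k)$.
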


\subsection{Families of type $B$ semicircular variables.}

Let $C(i_{1},\dots,i_{k})$ be the number of all non-crossing pairings
of $\{1,\dots,k\}$ for which $i_{p}=i_{q}$ whenever $p\stackrel{\pi}{\sim}q$
(i.e. these are color-preserving partitions of $\{1,\dots,k\}$ in
which the $p$-th digit is colored by the color $i_{p}$). Let $B(i_{1},\dots,i_{k};j)$
be the number of type $B$ non-crossing pairings of $\{1,\dots,k\}$
for which $i_{p}=i_{q}$ whenever $|p|\stackrel{\pi}{\sim}|q|$ and
for which the zero block contains $j$. Note that $B(i_{1},\dots,i_{k};j)=C(i_{1},\dots,i_{k})$,
since $Abs(\pi)$ together with the designation of which pair in $Abs(\pi)$
comes from the zero block of $\pi$ determines $\pi$ fully, and since
the parity of $j$ determines uniquely whether it can be the smallest
or largest element in a class of $Abs(\pi)$.

By analogy with the single-variable case, we shall call a family of
type $B$ non-commutative random variables $((X_{1},\xi_{1}),\ldots,(X_{n},\xi_{n}))$
in a type $B$ probability space $(A,\mathcal{V},\tau,f,\Phi)$ a type
B semicircular family if its law is given by\begin{eqnarray*}
\tau(X_{i_{1}}\cdots X_{i_{k}}) & = & C(i_{1},\dots,i_{k})\\
f(X_{i_{1}}\cdots X_{i_{j-1}}\xi_{i_{j}}X_{i_{j+1}}\cdots X_{i_{k}}) & = & B(i_{1},\dots,i_{k},;j).\end{eqnarray*}
In particular, note that the variables $(X_{1},\ldots,X_{n})$ form
a free semicircular family.

\begin{lemma}
Let $((X_{1},\xi_{1}),\ldots,(X_{n},\xi_{n}))$ be type $B$ non-commutative
random variables as above. Then they are (type $B$) freely independent.
\end{lemma}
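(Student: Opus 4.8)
The plan is to exploit the identity $B(i_{1},\dots,i_{k};j)=C(i_{1},\dots,i_{k})$ just established, which says that the full type $B$ law of $((X_{1},\xi_{1}),\dots,(X_{n},\xi_{n}))$ coincides with that of a very concrete family. Let $X_{1},\dots,X_{n}$ be a standard free semicircular family (each $X_{i}$ semicircular of variance one, the whole family $A$-free) in a type $A$ space $(\mathcal A,\varphi)$; then $\varphi(X_{i_{1}}\cdots X_{i_{k}})=C(i_{1},\dots,i_{k})$ by the pairing formula. Viewing $(\mathcal A,\varphi,\mathcal A,\varphi)$ as a type $B$ space as in Example~\ref{AB} and setting $\xi_{i}:=X_{i}$, we get $f(X_{i_{1}}\cdots\xi_{i_{j}}\cdots X_{i_{k}})=\varphi(X_{i_{1}}\cdots X_{i_{k}})=C(i_{1},\dots,i_{k})=B(i_{1},\dots,i_{k};j)$. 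Thus the concrete family $((X_{1},X_{1}),\dots,(X_{n},X_{n}))$ realizes exactly the prescribed type $B$ semicircular law, and the strategy is to read off freeness from this realization.

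First I would record that type $B$ freeness of a family is a property of its joint type $B$ law: applied to the pairs $(A_{i},V_{i})$ generated by $(X_{i},\xi_{i})$ (with $A_{i}=\mathrm{alg}(X_{i})$ and $V_{i}=\mathrm{span}\{X_{i}^{a}\xi_{i}X_{i}^{b}\}$), conditions (i) and (ii) of Definition~\ref{freenessB} only ever involve $\tau$ of words in the $X$'s and values of $\varphi=f$ on words containing a single $\xi$ --- precisely the data carried by the law. Hence two families whose type $B$ laws agree under the index-wise identification of generators are simultaneously $B$-free or not. Next I would invoke Example~\ref{AB}: since $X_{1},\dots,X_{n}$ are $A$-free, the pairs $(\mathrm{alg}(X_{i}),\mathrm{alg}(X_{i}))$ are $B$-free, so the concrete family is $B$-free. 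Transporting this along the equality of laws yields $B$-freeness of the original family.

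The only genuinely non-formal point --- the main obstacle --- is this law-invariance of freeness, i.e. checking that every expression occurring in Definition~\ref{freenessB} for the pairs $(A_{i},V_{i})$ is indeed a linear combination of the law values $\tau(X_{j_{1}}\cdots X_{j_{r}})$ and $f(X_{j_{1}}\cdots\xi_{j_{p}}\cdots X_{j_{r}})$; once granted, the argument is immediate. As a self-contained combinatorial alternative that sidesteps this step, I would instead compute the type $B$ cumulants directly by Möbius inversion over $NC^{(A)}$ with $\mathcal C$-valued coefficients. Expanding the moment--cumulant relation to first order in the nilpotent $\hbar$ (so $\mathcal C=\mathbb C+\hbar\mathbb C$), and using that the first-coordinate cumulants are those of a free semicircular family, one finds $\kappa_{1}^{(B)}=0$, $\kappa_{2}^{(B)}((X_{i},\xi_{i}),(X_{j},\xi_{j}))=(\delta_{ij},2\delta_{ij})$, and that all higher $\kappa_{\ell}^{(B)}$ vanish. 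Since every \emph{mixed} type $B$ cumulant then vanishes, the type $B$ analogue of Speicher's criterion --- valid because the type $B$ combinatorics is identical to type $A$ over $\mathcal C$ --- gives $B$-freeness.
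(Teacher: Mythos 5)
Your proposal is correct, but your primary route differs from the paper's. The paper proves the lemma by exhibiting the cumulants directly: it defines $\kappa$ supported on pairs with $\kappa(X_i,X_j)=\kappa(\xi_i,X_j)=\kappa(X_i,\xi_j)=\delta_{i=j}$, verifies via Lemma \ref{lem:abs-is-a-cover} that the prescribed moments satisfy the summation-over-noncrossing-pairings formula for both $\tau$ and $f$, identifies these $\kappa$ with the $(A')$ cumulants of \cite{BGN} (formula (6.13) there), and concludes by the vanishing of mixed cumulants — i.e., exactly your ``self-contained combinatorial alternative'' at the end, with your values $\kappa_2^{(B)}=(\delta_{ij},2\delta_{ij})$ consistent with the paper's (one can check the $\mathcal C$-product over the $n$ blocks of a pairing of $2n$ letters yields second coordinate $2n\,c(\rho)$, matching $\sum_j B(i_1,\dots,i_{2n};j)=2n\,C(i_1,\dots,i_{2n})$). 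Your main argument — realizing the law concretely as $((X_1,X_1),\dots,(X_n,X_n))$ in $(\mathcal A,\varphi,\mathcal A,\varphi)$ via the identity $B(i_1,\dots,i_k;j)=C(i_1,\dots,i_k)$, invoking Example \ref{AB}, and transporting $B$-freeness along equality of type $B$ laws — is a genuinely different and valid route. Its appeal is that it makes the lemma transparent (the type $B$ semicircular family is ``really'' just a free semicircular family paired with itself) and avoids any cumulant bookkeeping; its cost is precisely the meta-lemma you flag, that $B$-freeness of the pairs $(\mathrm{alg}(X_i),\,\mathrm{span}\{X_i^a\xi_iX_i^b\})$ is determined by the joint type $B$ law. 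That step does go through — both sides of condition (ii) in Definition \ref{freenessB}, evaluated on polynomial expressions in the generators, are linear combinations of law values, and the centering constraints $\tau(a_j)=0$ are likewise law-determined, so the universally quantified conditions transfer — but it deserves to be written out rather than asserted, whereas the paper's cumulant criterion sidesteps it entirely by appealing to results already proved in \cite{BGN}.
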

\begin{proof}
Since the joint (type $A$) law of $(X_{1},\ldots,X_{n})$ is that of
a semicircular family, it follows that $X_{1},\ldots,X_{n}$ are (type
A) freely independent. 

Fix $i_{1},\dots,i_{k}$. For $\pi$ a non-crossing pairing of type
A, let $c(\pi)=1$ if $i_{j}=i_{j'}$ whenever $j\stackrel{\pi}{\sim}j'$
and $c(\pi)=0$ otherwise. Then we may write, in view of Lemma \ref{lem:abs-is-a-cover}\begin{eqnarray*}
f(X_{i_{1}}\cdots X_{i_{j-1}}\xi_{i_{j}}X_{i_{j+1}}\cdots X_{i_{k}}) & = & \sum_{{\pi\textrm{ type $B$ non-crossing}\atop \textrm{having a zero block sarting at }j}}c(Abs(\pi))\\
 & = & \sum_{Abs(\pi)}c(Abs(\pi))\\
 & = & \sum_{\rho\textrm{ type $A$ non-crossing}}\prod_{\{a,b\}\textrm{ class of }\rho}\delta_{i_{a}=i_{b}}.\end{eqnarray*}
Thus if we set $\kappa(X_{1},\ldots,\xi_{j},\ldots,X_{p})=0$ unless
$p=2$ and let $\kappa(X_{i},X_{j})=\kappa(\xi_{i},X_{j})=\kappa(X_{i},\xi_{j})=\delta_{i=j}$,
then we have\[
f(X_{i_{1}}\cdots X_{i_{j-1}}\xi_{i_{j}}X_{i_{j+1}}\cdots X_{i_{k}})=\sum_{\rho\textrm{ type $A$ non-crossing}}\kappa_{\rho}(X_{i_{1}},\ldots,\xi_{i_{j}},\ldots,X_{i_{k}}).\]
Since $X_{1},\ldots,X_{n}$ are (type $A$) semicircular and free we
also have a similar formula for $\tau$:\[
\tau(X_{i_{1}}\cdots X_{i_{k}})=\sum_{\rho\textrm{ type $A$ non-crossing }}\kappa_{\rho}(X_{i_{1}},\dots,X_{i_{k}}).\]
It follows from formula (6.13) on p. 2292 of \cite{BGN} that the $(A')$
cumulants of the type $B$ family $X_{1},\ldots,X_{n}$ are exactly the
functionals $\kappa$ that we defined above. Since $\kappa$ obviously
satisfies the condition that mixed cumulants vanish, it follows that
our family is indeed type $B$ freely independent (see Theorem 6.4 on p. 
2293, Proposition on p. 2298 and Corollary on p. 2300 of \cite{BGN}).
\end{proof}

\subsection{The type $B$ Poisson limit theorem}
We consider next the issue of the Poisson limit theorem for type
$B$ distributions.
Recall from the paper of M. Popa \cite{MVP}
that a type $B$ Bernoulli variable
has the type $B$ law given by\[
E((a,\xi)^{n})=\Lambda
A^{n}\]
where 
$\Lambda= \left[\begin{array}{cc}
\lambda_1 & \lambda_2\\ 
0& \lambda_1
\end{array}\right]$
and $A=\left[\begin{array}{cc}
\alpha_1 & \alpha_2\\ 
0& \alpha_1
\end{array}\right]$
. The formula is valid for all
$n\geq1$ (for $n=0$ the correct answer is 
the identity in $\mathcal C$). Thus explicitely
we have\[
E((a,\xi)^{n})=\left[\begin{array}{cc}
\lambda_1\alpha_1^n & \lambda_2\alpha_1^n+n\lambda_1\alpha_1^{n-1}\alpha_2\\ 
0& \lambda_1\alpha_1^n
\end{array}\right]
.\]
Let now\[
X_{t}=\alpha_{1}\exp\left(\frac{\alpha_{2}}{\alpha_{1}}t\right)p_{t},\]
where $p_{t}$ is a family of projections so that $\tau(p_{t})=\lambda_{1}+t\lambda_{2}$.%
\footnote{Of course, we could modify $X_{t}$ by removing higher order terms
in $t$, so for example we could use $X_{t}=\alpha_{1}(1+t\alpha_{2}/\alpha_{1})p_{t}$.%
}Then:\begin{eqnarray*}
\tau(X_{t}^{n}) & = & \alpha_{1}^{n}\exp\left(\frac{\alpha_{2}}{\alpha_{1}}nt\right)\tau(p_{t})=\alpha_{1}^{n}\exp\left(\frac{\alpha_{2}}{\alpha_{1}}nt\right)(\lambda_{1}+t\lambda_{2})\\
 & = & \lambda_{1}\alpha_{1}^{n}+t(\alpha_{1}^{n}\frac{\alpha_{2}}{\alpha_{1}}n\lambda_{1}+\alpha_{1}^{n}\lambda_{2})+O(t^{2})\\
 & = & \lambda_{1}\alpha_{1}^{n}+t(\lambda_{2}\alpha_{1}^{n}+n\lambda_{1}\alpha_{1}^{n-1}\alpha_{2})+O(t^{2}),\end{eqnarray*}
so that $X_{t}$ has the desired infinitesimal law.

Thus, one can re-interpret Popa's type $B$ Poisson summation theorem as
essentially a direct consequence of the free Poisson summation theorem,
since for each fixed $t$, appropriately rescaled free sums of $X_{t}^{n}$
will have a free Poisson law with parameters $(\alpha_{1}\exp(t\alpha_{2}/\alpha_{1}),\lambda_{1}+t\lambda_{2})$
as a limit. (We remind the reader that a free Poisson law with 
parameters $(\alpha,\lambda)$ is 
the result of applying the homotethy by $\alpha$ to the 
law with with $R$-transform $\lambda z(1-z)^{-1}$).

We state our result in the following corollary:
\begin{corollary}
Free type $B$ Poisson laws are infinitesimals
to the family of free Poisson laws with the parameters $(\alpha_{1}\exp(t\alpha_{2}/\alpha_{1}),\lambda_{1}+t\lambda_{2})$.
An operator model for these laws is 
\[
Y_{t}=\alpha_{1}\exp(t\alpha_{2}/\alpha_{1})Xp_{t}X=X\ B_{t}X\]
where $X$ is a standard semicircular and $p_{t}$ are projections
with $\tau(p_{t})=\lambda_{1}+t\lambda_{2}$, and $B_{t}=\alpha_{1}\exp(t\alpha_{2}/\alpha_{1})$
is type $B$ Bernoulli. Moreover, any product of the form
\[
Z_{t}\ B_{t}\ Z_{t}\]
where $Z_{t}$ is a type $B$ semicircular and $B_{t}$ a type $B$ Bernoulli, 
is a type $B$ Poisson.
\end{corollary}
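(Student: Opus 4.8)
The plan is to reduce the entire statement to a single structural fact from type $A$ free probability --- that a standard semicircular $s$ free from $a$ turns $sas$ into a free compound Poisson element --- and then to transport this fact into the type $B$ world along the two bridges built earlier in the paper: the single-variable dictionary of \S\ref{sub:Single-variable-case.} for the explicit operator model, and the principle ``type $B$ freeness is type $A$ freeness over $\mathcal C$'' (Corollary \ref{functionalequation} and the remarks following it) for the general sandwich $Z_tB_tZ_t$.

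First I would treat the operator model at each fixed $t$. With $X$ standard semicircular free from $B_t=\alpha_1\exp(t\alpha_2/\alpha_1)p_t$ and $\tau(p_t)=\lambda_1+t\lambda_2$, the standard identity $\kappa_n(XaX)=\varphi(a^n)$ (for $X$ standard semicircular free from $a$) gives
\[
\kappa_n(Y_t)=\varphi(B_t^{\,n})=\bigl(\alpha_1\exp(t\alpha_2/\alpha_1)\bigr)^n(\lambda_1+t\lambda_2),
\]
so $Y_t$ is free Poisson of rate $\lambda_1+t\lambda_2$ and jump $\alpha_1\exp(t\alpha_2/\alpha_1)$ for every $t$. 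The associated infinitesimal law assembles the value and the $t$-derivative at $0$ of this cumulant sequence into the single $\mathcal C$-valued quantity
\[
\Bigl(\lambda_1\alpha_1^n,\ \lambda_2\alpha_1^n+n\lambda_1\alpha_1^{n-1}\alpha_2\Bigr)=\Lambda A^n,
\]
which is precisely the moment sequence $E((a,\xi)^n)$ of Popa's type $B$ Bernoulli, i.e. the type $B$ free cumulant sequence of the type $B$ Poisson (here \S\ref{sub:Single-variable-case.} applies because $\varphi$ is a trace in the space of Example \ref{AB}). This proves the opening sentence and the validity of the stated model, and reproduces the reinterpretation of Popa's summation theorem recorded just before the corollary.

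For the closing assertion I would lift the same computation to $\mathcal C$-valued cumulants. The identity $\kappa_n(sas)=\kappa_2(s)^n\varphi(a^n)$ is a universal consequence of (i) freeness of $s$ and $a$ and (ii) the vanishing of every free cumulant of $s$ except the second, obtained by matching non-crossing pairings of the $2n$ semicircular legs with non-crossing partitions of the $n$ inner factors; both hypotheses and the whole cumulant calculus carry over verbatim with $\mathbb C$ replaced by $\mathcal C$. A type $B$ semicircular $Z$ is exactly an element all of whose type $B$ cumulants vanish except $\kappa_2^{(B)}(Z,Z)=:C\in\mathcal C$, and a type $B$ Bernoulli $B$ has $E(B^n)=\Lambda A^n$, so
\[
\kappa_n^{(B)}(ZBZ)=C^{\,n}\,E(B^n)=C^{\,n}\,\Lambda A^n=\Lambda\,(CA)^n,
\]
the last step using the commutativity of $\mathcal C$. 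Since $\Lambda(CA)^n$ is again the moment sequence of a type $B$ Bernoulli, $ZBZ$ is the type $B$ Poisson of rate $\Lambda$ and jump $CA$; for the standard type $B$ semicircular one has $C=(1,1)$, so the jump $A=(\alpha_1,\alpha_2)$ is merely shifted to $(\alpha_1,\alpha_1+\alpha_2)$.

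The step I expect to require the most care is the transfer of the scalar identity $\kappa_n(sas)=\kappa_2(s)^n\varphi(a^n)$ to $\mathcal C$-coefficients: one must check that the pairing-to-partition bijection and the factoring out of the common leg contribution $C^n$ depend only on $\mathcal C$ being central and commutative inside $\mathcal A\times\mathcal V$, so that the resulting $\mathcal C$-valued cumulant sequence genuinely lands in the type $B$ Poisson family. Once this structural identity is secured, the identification of the fixed-$t$ laws as free Poisson and the passage to the infinitesimal law are routine.
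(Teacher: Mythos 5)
Your proposal is correct, and for the first two assertions (the infinitesimal characterization and the operator model $Y_t=XB_tX$) it runs along the same lines as the paper: the paper verifies that $X_t=\alpha_1\exp(t\alpha_2/\alpha_1)p_t$ has the type $B$ Bernoulli as its infinitesimal law and then invokes the free Poisson summation theorem at each fixed $t$, so that the type $B$ Poisson is the infinitesimal of the family of free Poissons with parameters $(\alpha_1\exp(t\alpha_2/\alpha_1),\lambda_1+t\lambda_2)$; your direct computation of $\kappa_n(Y_t)=(\alpha_1\exp(t\alpha_2/\alpha_1))^n(\lambda_1+t\lambda_2)$ via $\kappa_n(XaX)=\varphi(a^n)$ reaches the same place while bypassing the exchange of limits ($N\to\infty$ against $t\to 0$) that is implicit in the paper's appeal to the summation theorem. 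Where you genuinely diverge is the final claim about $Z_tB_tZ_t$: the paper offers no argument there, and its framework would suggest arguing fiberwise in $t$ (for each $t$, a semicircular sandwich of a scaled projection is a free Poisson, then pass to the infinitesimal law), whereas you lift the scalar identity $\kappa_n(sas)=\kappa_2(s)^n\varphi(a^n)$ to $\mathcal C$-valued cumulants and read off $\kappa_n^{(B)}(ZBZ)=\Lambda(CA)^n$ directly. Your route is legitimate --- the products-as-entries cumulant formula is a purely combinatorial consequence of the moment--cumulant relation and so transfers verbatim over the central commutative algebra $\mathcal C$, exactly as licensed by Corollary \ref{functionalequation} and the surrounding remarks --- and it buys something the fiberwise argument does not state explicitly, namely the parameters $(CA,\Lambda)$ of the resulting type $B$ Poisson and the observation that a non-standard variance $C$ merely rescales the jump. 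The fiberwise argument, on the other hand, needs no cumulant machinery over $\mathcal C$ beyond what Proposition \ref{pro:B-free-implies-inffree} already provides. One point worth making explicit in your write-up: both the statement and your proof require $Z$ and $B$ to be $B$-free, which the corollary leaves tacit.
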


In the spirit of the above corollary, we recall that, as shown by
Nica and Speicher in \cite{NSmult},
for an arbitrary probability measure on the real line
$\mu$, one can define the $t^{\rm th}$ free additive
convolution power $\mu^{\boxplus t}$ for any $t\ge1$;
moreover, \cite{NSmult} provides also an operatorial 
representation of $\mu^{\boxplus t}$. If $X=X^*\in
(\mathcal A,\varphi)$ is distributed according to $\mu$,
and $p(t)=p(t)^2=p(t)^*\in(\mathcal A,\varphi)$ is a
projection free from $X$ with $\varphi(p(t))=t^{-1}$, then 
$tp(t)Xp(t)\in(p(t)\mathcal Ap(t),\frac1t\varphi)$ is distributed
according to $\mu^{\boxplus t}$. Thus, in a certain sense,
derivating along the path $t\mapsto\mu^{\boxplus t}$ with respect
to $t$ corresponds at an operatorial level to derivating along
a path of free projectors. This observation holds in particular
for the stable laws described in Corollary \ref{cor29}.

\section{A matrix model for type $B$ freeness.}

The main result of this section deals with the asymptotics of the
law of a matrix $X_{N}$ whose entries are semicircular variables.
More precisely, we assume that $X_{N}=X_{N}^{*}$ is $N\times N$
and that its entries $\{X_{N}^{ij}:1\leq i,j\leq N\}$ form a free
semicircular family so that the covariance of $X_{N}^{ij}$ is $N^{-1/2}(1+\delta_{i=j})$.
The matrix $X_{N}$ is a free probability analog of a real Gaussian
random matrix (the free probability analog of a complex Gaussian random
matrix differs in that the off-diagonal entries $X_{N}^{ij}$, $i<j$,
are circular rather than semicircular). Note in particular that $X_{N}=X_{N}^{t}$
if by the latter we denote the a kind of transpose of $X_{N}$ obained
by switching its rows and columns.

Although $X_{N}$ has in the $N\to\infty$ limit a semicircular law
of variance $1$, its law varies with $N$ (in fact, we'll show that
the law of $X_{N}$ is seicircular of variance $(1+1/N)$ for all
$N$). Thus one has a canonical infinitesimal law associated to $X_{N}$.
Indeed, every moment of $X_{N}$ admits an expansion in powers of
$1/N$. Thus moments taken to order $t=1/N$ give rise to an infinitesimal
law, and the main reult of this section (Corollary \ref{cor:infLawXN})
states that the infinitesimal law associated to the matrices $X_{N}$
as $N\to\infty$ is the same as the infinitesimal law associated to
a type $B$ semcircular variable. 

\subsection{Matrices with entries free creation operators.}

Let $\ell(i,j,k)$ be a family of $*$-free creation operators in
a non-commutative probability space $(A,\psi)$. We thus assume that
\[
\ell(i,j,k)^{*}\ell(i',j',k')=\delta_{i=i',\ j=j',\ k=k'}1\]
and $\psi(w)=0$ whenever $w$ is a word involving $\ell(i,j,k)$'s
and their adjoints that cannot be reduced to a scalar using the relation
above. It is well-known (see e.g. {[}...]) that these two requirements
completely determine the joint $*$-distribution of this family and
moreover that the operators $\{\ell(i,j,k)\}_{i,j,k}$ are $*$-free.

Consider algebra $M_{N\times N}(A)=M_{N\times N}(\mathbb{C})\otimes A$
of $N\times N$ matrices with entries from $A$, endowed with the
state $\psi_{N}=\frac{1}{N}Tr\otimes\psi$. Let $L_{N}(k)$ be the
matrix\[
L_{N}(k)=\frac{1}{\sqrt{N}}(\ell(i,j,k))_{1\leq i,j\leq N}\]
and let $L_{N}(k)^{t}$ be its {}``transpose'':\[
L_{N}(k)^{t}=\frac{1}{\sqrt{N}}(\ell(j,i,k))_{1\leq i,j,\leq N}.\]

\begin{lemma}
The elements $L_{N}(k)$, $L_{N}(k)^{t}$ satisfy:\begin{eqnarray*}
L_{N}(k)^{*}L_{N}(k') & = & \delta_{k=k'}1\\
(L_{N}(k)^{t})^{*}L_{N}(k')^{t} & = & \delta_{k=k'}1\\
(L_{N}(k)^{t})^{*}L_{N}(k') & = & \delta_{k=k'}\frac{1}{N}\\
(L_{N}(k))^{*}L_{N}(k')^{t} & = & \delta_{k=k'}\frac{1}{N}.\end{eqnarray*}
Moreover, if $w$ is a word in $\{L_{N}(k),L_{N}(k)^{t}:k=1,2,\dots\}$
and their adjoint, which cannot be reduced to a scalar using these
relations, then $\psi_{N}(w)=0$. 
\end{lemma}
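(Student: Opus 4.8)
The plan is to prove the four displayed identities by a one-line entrywise computation each, and then to obtain the vanishing statement by reducing it to the defining property of $\psi$, after noticing that the creation/annihilation pattern of a word does not see the matrix indices.

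For the identities I would write out the entries $(L_N(k))_{pq}=\frac{1}{\sqrt N}\ell(p,q,k)$, $(L_N(k)^t)_{pq}=\frac{1}{\sqrt N}\ell(q,p,k)$, and correspondingly $(L_N(k)^*)_{pq}=\frac{1}{\sqrt N}\ell(q,p,k)^*$, $((L_N(k)^t)^*)_{pq}=\frac{1}{\sqrt N}\ell(p,q,k)^*$. Then, for instance,
$$(L_N(k)^* L_N(k'))_{pq}=\frac{1}{N}\sum_{m=1}^N\ell(m,p,k)^*\ell(m,q,k')=\frac{1}{N}\sum_{m=1}^N\delta_{p=q}\delta_{k=k'}1=\delta_{p=q}\delta_{k=k'}1,$$
which is the first relation. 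The other three are identical in spirit; the only feature to record is that in the two mixed products $(L_N(k)^t)^*L_N(k')$ and $L_N(k)^*L_N(k')^t$ the index constraints coming from $\ell(\cdot)^*\ell(\cdot)=\delta\,1$ force the summation index to equal both free indices at once, so only a single diagonal term survives and the prefactor $\frac1N$ is not cancelled, yielding $\delta_{k=k'}\frac1N$.

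For the vanishing statement, let $w=A_1\cdots A_r$ with each $A_s$ one of $L_N(k_s),L_N(k_s)^t$ and their adjoints. Expanding the matrix product and the trace gives
$$\psi_N(w)=\frac{1}{N^{r/2+1}}\sum_{i_0,\dots,i_{r-1}}\psi(\tilde m_1\tilde m_2\cdots\tilde m_r),$$
where $\tilde m_s$ denotes the single factor in the $(i_{s-1},i_s)$ entry of $A_s$ (indices read cyclically, $i_r=i_0$) with its $\frac{1}{\sqrt N}$ stripped off: it is a creation operator $\ell(\cdot,\cdot,k_s)$ when $A_s\in\{L_N(k_s),L_N(k_s)^t\}$ and an annihilation operator $\ell(\cdot,\cdot,k_s)^*$ when $A_s\in\{L_N(k_s)^*,(L_N(k_s)^t)^*\}$. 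The crucial remark is that the pattern of creation versus annihilation letters in $\tilde m_1\cdots\tilde m_r$ is exactly the pattern of creation versus annihilation factors in $w$, hence the same for every choice of $i_0,\dots,i_{r-1}$.

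Finally I would use that the sole relation among the $\ell$'s, $\ell(\cdot)^*\ell(\cdot)=\delta\,1$, cancels an annihilation immediately followed by a creation; hence a word in the $\ell$'s can be reduced to a scalar only when its creation/annihilation pattern is a balanced bracketing, with annihilations opening and creations closing. The four matrix relations realize every annihilation--creation adjacency, so the very same balancedness criterion governs whether $w$ itself reduces to a scalar. Thus, if $w$ cannot be reduced to a scalar, its pattern is unbalanced, and then \emph{no} word $\tilde m_1\cdots\tilde m_r$ reduces to a scalar either, for any indices; by the defining property of $\psi$ every summand above is $0$, so $\psi_N(w)=0$. The one point needing care — and the only real content of the argument — is precisely this equivalence between reducibility of the abstract $L$-word and balancedness of its (index-independent) creation/annihilation pattern; once it is in place the index summation and the powers of $N$ become irrelevant, and the vanishing is exact rather than merely asymptotic.
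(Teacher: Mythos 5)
Your proposal is correct and follows essentially the same route as the paper: the four identities by the same entrywise computation (the paper does $(L_N(k)^t)^*L_N(k')$ explicitly and notes the others are similar), and the vanishing statement by expanding $\psi_N(w)$ over matrix indices and observing that each resulting $\ell$-word inherits the creation/annihilation pattern of $w$, hence is irreducible (or zero) whenever $w$ is, and is therefore killed by $\psi$. One harmless imprecision: an $\ell$-word with unbalanced pattern \emph{can} reduce to a scalar, namely $0$, when an adjacent $\ell(\cdot)^*\ell(\cdot)$ pair has mismatched indices, so the correct assertion is not that no summand reduces to a scalar but that every summand has $\psi$-value $0$ -- either by irreducibility or by vanishing outright -- which is exactly what your argument needs and what the paper's proof also uses.
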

\begin{proof}
We compute the $a,b$-th entry of each of the matrices listed above
and use the relations among $\ell(i,j,k)$ and their adjoints. For
the matrix $(L_{N}(k)^{t})^{*}L_{N}(k')$, we get as the $a,b$-th
entry\[
\frac{1}{N}\sum_{i}\ell(a,i,k)^{*}\ell(i,b,k')=\delta_{k=k'}\delta_{a=b}\frac{1}{N}.\]
The other comutations are similar.

Finally, if $w$ is an irreducible words, then all entries of $w$
are either zero, or are irreducible words and thus yield zero when
supplied as an argument to $\psi$.
\end{proof}

\subsection{The law of $X_{N}$. }

\begin{corollary}
\label{cor:lawofXNis}Let $\{s(i,j,k):1\leq i\leq j\leq N,k=1,2,\dots\}$
be a free semicircular family, and let $X_{N}$ be an $N\times N$
matrix whose $i,j$-th entry is $(1+\delta_{i=j})N^{-1/2}s(i,j,k)$
if $i\leq j$ and $s(j,i,k)$ if $i>j$. Then $\{X_{N}(k):k=1,2,\dots\}$
form a free semicircular family, and each $X_{N}(k)$ is semicircular
with variance $(1+1/N)$. In particular, for each $k$, $\psi_{N}(X_{N}(k)^{m})=0$
if $m$ is odd, and\[
\psi_{N}(X_{N}(k)^{2n})=C_{n}\left(1+\frac{1}{N}\right)^{n},\]
where $C_{n}$ is the number of non-crossing pairings of $\{1,\dots,2n\}$. 
\end{corollary}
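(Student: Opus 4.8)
The plan is to realize each $X_{N}(k)$ as a sum $\Lambda(k)+\Lambda(k)^{*}$ of a single ``creation-type'' operator and its adjoint, built from the matrices of the preceding lemma, and thereby reduce the entire statement to the standard Fock-space characterization of free semicircular families. Concretely, I would set
$$\Lambda(k)=\frac{1}{\sqrt2}\bigl(L_{N}(k)+L_{N}(k)^{t}\bigr),$$
and first check that $X_{N}(k)=\Lambda(k)+\Lambda(k)^{*}$ by comparing entries: off the diagonal the $(i,j)$ entry of $\Lambda(k)+\Lambda(k)^{*}$ equals $\frac{1}{\sqrt{2N}}$ times $(\ell(i,j,k)+\ell(i,j,k)^{*})+(\ell(j,i,k)+\ell(j,i,k)^{*})$, a sum of two free standard semicirculars, hence $N^{-1/2}$ times a standard semicircular that is symmetric in $i,j$; on the diagonal the two summands coincide and one picks up the extra weight recorded by the factor $(1+\delta_{i=j})$. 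This identifies $\Lambda(k)+\Lambda(k)^{*}$ with $X_{N}(k)$.

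The heart of the argument is the computation of the covariance. Expanding $\Lambda(k)^{*}\Lambda(k')$ and feeding in the four relations of the lemma gives
$$\Lambda(k)^{*}\Lambda(k')=\tfrac12\bigl[L_{N}(k)^{*}L_{N}(k')+L_{N}(k)^{*}L_{N}(k')^{t}+(L_{N}(k)^{t})^{*}L_{N}(k')+(L_{N}(k)^{t})^{*}L_{N}(k')^{t}\bigr]=\tfrac12\,\delta_{k=k'}\Bigl(1+\tfrac1N+\tfrac1N+1\Bigr)=\delta_{k=k'}\Bigl(1+\tfrac1N\Bigr)\,1.$$
The decisive point is that the two ``crossed'' inner products $(L_{N}(k)^{t})^{*}L_{N}(k')$ and $L_{N}(k)^{*}L_{N}(k')^{t}$ each equal $\delta_{k=k'}/N$, so that $\Lambda(k)^{*}\Lambda(k')$ is an honest \emph{scalar} multiple of the identity rather than a more general positive operator; this is precisely what forces $X_{N}(k)$ to be exactly semicircular (and not merely operator-valued semicircular) with variance $1+1/N$.

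To finish, I would invoke the Fock-space criterion: a family $\{\Lambda(k)+\Lambda(k)^{*}\}_{k}$ for which $\psi_{N}$ vanishes on every word in the $\Lambda(k),\Lambda(k)^{*}$ that does not reduce to a scalar under $\Lambda(k)^{*}\Lambda(k')=\delta_{k=k'}(1+1/N)$ is a free semicircular family, each element semicircular of variance $1+1/N$. The required vanishing is inherited from the last assertion of the lemma by expanding each $\Lambda(k)$ into $L_{N}(k)$ and $L_{N}(k)^{t}$: an irreducible word in the $\Lambda$'s expands into words in $\{L_{N}(k),L_{N}(k)^{t}\}$ and their adjoints, all of which are annihilated by $\psi_{N}$. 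Freeness of the family is the vanishing of the off-diagonal ($k\neq k'$) covariances, which we have just verified; the odd moments of each $X_{N}(k)$ vanish because an odd word cannot pair all creations with annihilations; and the even moments are computed by the usual count of non-crossing pairings, giving $\psi_{N}(X_{N}(k)^{2n})=C_{n}(1+1/N)^{n}$.

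I expect the main obstacle to be twofold. First, the careful matching of entries on the diagonal, where the doubling $\ell(i,i,k)+\ell(i,i,k)$ must be reconciled with the weight $(1+\delta_{i=j})$ and the variance bookkeeping so that the single-element variance comes out to exactly $1+1/N$ rather than, say, $1+3/N$. Second, and more conceptually, the verification that $\Lambda(k)^{*}\Lambda(k')$ is scalar: everything hinges on the two off-diagonal relations of the lemma producing exactly $1/N$, and it is worth isolating this as the genuine content — it is the orthogonal (``real'') symmetry of $X_{N}$, as opposed to a unitary (``complex'') one, that turns the leading GUE-type answer $C_{n}$ into the corrected $C_{n}(1+1/N)^{n}$.
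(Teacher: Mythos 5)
Your proposal is correct and follows essentially the same route as the paper: the authors also set $L_k=2^{-1/2}(L_N(k)+L_N(k)^t)$, identify $X_N(k)$ with $L_k+L_k^*$ by matching entries (with the same $\sqrt2$ bookkeeping on the diagonal), compute $L_k^*L_{k'}=\delta_{k=k'}(1+1/N)$ from the four relations of the preceding lemma, and invoke the standard Fock-space characterization of free semicircular families. Your isolation of the two crossed terms $(L_N(k)^t)^*L_N(k')=L_N(k)^*L_N(k')^t=\delta_{k=k'}/N$ as the source of the $1/N$ correction is exactly the content of the paper's argument.
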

\begin{proof}
Let $\hat{X}_{N}(k)=\frac{1}{\sqrt{2}}(L_{N}(k)+L_{N}(k)^{*}+L_{N}(k)^{t}+(L_{N}(k)^{t})^{*})$.
Then $\hat{X}_{N}(k)$ is self-adjoint and its $i,j$-th entry, for
$i\leq j$ is equal to $ $$N^{-1/2}(\ell(i,j,k)+\ell(j,i,k)+\ell(i,j,k)^{*}+\ell(j,i,k)^{*})/\sqrt{2}$.
Since $l(i,j,k)=(\ell(i,j,k)+\ell(j,i,k))/\sqrt{2}$ is a free creation
operator for $i\neq j$ and is $\sqrt{2}$ times a free creation operator
for $i=j$, it follows that the joint law of the entries of $\{\hat{X}_{N}(k):k=1,2,\dots\}$
and $\{X_{N}(k):k=1,2,\dots\}$ are the same. Thus the laws of these
matrices are the same as well.

$L_{k}=2^{-\frac{1}{2}}(L_{N}(k)+L_{N}(k)^{t})$, so that $X_{N}(k)=L_{k}+L_{k}^{*}$.
Note that $L_{k}^{*}L_{k'}=(1+1/N)\delta_{k=k'}$ and $\psi_{N}(w)=0$
whenever $w$ is a word in $\{L_{k},L_{k}^{*}:k=1,2,\dots\}$ which
cannot be reduced to a scalar using this relation. It then follows
that each $L_{k}+L_{k}^{*}$ is semicircular of variance $(1+1/N)$,
and that $\{L_{k}:k=1,2,\dots\}$ are $*$-free. 
\end{proof}

\subsection{Infinitesimal laws associated to $X_{N}$ as $N\to\infty$. }

Let $\{X_{N}(k):k=1,2,\dots\}$ be matrices as defined in Corollary 
\ref{cor:lawofXNis}.
Thus the $i,j$-th entry $s(i,j,k)$ of $X_{N}(k)$ is a semicircular
variable of variance $N^{-1/2}(1+\delta_{i=j})$ and the variables
$\{s(i,j,k):1\leq i,j\leq N,k=1,2,\dots\}$ are assumed to be freely
independent.

We now consider the infinitesimal law of the family $\{X_{N}(k):k=1,2,\dots\}$
as $t=1/N$ approaches $0$ (so $N\to\infty$). Thus we define $\mu,\mu'$
by\begin{eqnarray*}
\mu(p) & = & \lim_{1/N\to0}\psi_{N}(p(X_{N}(1),\dots,X_{N}(m)),\\
\mu'(p) & = & \lim_{1/N\to0}\frac{1}{1/N}(\psi_{N}(p(X_{N}(1),\ldots,X_{N}(m))-\mu(p)),\end{eqnarray*}
for $p$ an arbitrary non-commutative polynomial in $m$ variables
$t_{1},\dots,t_{m}$.

\begin{corollary}
\label{cor:infLawXN}Let $(\mu,\mu')$ be as defined in the previous
paragraph. Then the variables $t_{1},\dots,t_{m}$ are infinitesimally
free when considered with the law $(\mu,\mu')$. Furthermore, the
restriction of $(\mu,\mu')$ to any variable $t_{k}$ yields an infinitesimal
(i.e., type $B$) semicircular law.
\end{corollary}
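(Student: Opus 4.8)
The plan is to read off both assertions directly from Corollary \ref{cor:lawofXNis}, using the principle recorded in Remark \ref{rem:freeness-fort-implies-inffree} that genuine freeness at each finite $N$ already forces infinitesimal freeness of the limiting law. In this way essentially no new combinatorics is needed beyond the variance-$(1+1/N)$ semicircular computation; the only real work is to confirm that the infinitesimal law is well defined and that the limit $1/N\to 0$ interacts correctly with the freeness relations.

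First I would check that the limits defining $(\mu,\mu')$ exist on every non-commutative monomial. Since $\{X_N(k):k\ge 1\}$ is a free semicircular family in which each $X_N(k)$ has variance $1+1/N$, every joint moment $\psi_N(X_N(i_1)\cdots X_N(i_\ell))$ vanishes for $\ell$ odd and, for $\ell=2n$, equals $(1+1/N)^n\,C(i_1,\dots,i_{2n})$, where $C(i_1,\dots,i_{2n})$ is the number of colour-compatible non-crossing pairings. This is a polynomial in $1/N$, so both the limit as $1/N\to 0$ and the coefficient of $1/N$ exist; explicitly $\mu(t_{i_1}\cdots t_{i_{2n}})=C(i_1,\dots,i_{2n})$ and $\mu'(t_{i_1}\cdots t_{i_{2n}})=n\,C(i_1,\dots,i_{2n})$, with the odd moments of both $\mu$ and $\mu'$ equal to zero.

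For infinitesimal freeness I would invoke Corollary \ref{cor:lawofXNis}, which asserts that for each fixed $N$ the variables $X_N(1),\dots,X_N(m)$ are (type $A$) freely independent. Setting $t=1/N$, the families $\{X_N(k)\}$ are therefore genuinely free for every value of $t$ in the sequence $1/N\to 0$, so that every alternating centred moment vanishes identically, not merely to order $o(t)$. By Remark \ref{rem:freeness-fort-implies-inffree} this is exactly the hypothesis needed to conclude that the limiting infinitesimal law $(\mu,\mu')$ satisfies the infinitesimal-freeness relation \eqref{eq:1storder-free}; hence $t_1,\dots,t_m$ are infinitesimally free. The single-variable claim then follows by restricting the formulas of the previous paragraph to a single colour: $\mu(t_k^{2n})=C_n$ is the semicircle law of variance $1$, the odd moments vanish, and $\mu'(t_k^{2n})=nC_n$, which is precisely the value $B_n$ of the type $B$ semicircular infinitesimal law recorded in the combinatorial interpretation above. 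Thus each restriction is a type $B$ semicircular law.

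The step I expect to require the most care is the justification that passing to the limit $1/N\to 0$ commutes with the freeness relations, i.e. that exact freeness at each finite $N$ really delivers infinitesimal freeness of the limit rather than merely freeness of $\mu$. This is where Remark \ref{rem:freeness-fort-implies-inffree} does the essential work, and where the polynomiality of the moments in $1/N$ (needed to guarantee that $(\mu,\mu')$ exists as a bona fide infinitesimal law) feeds in as a separate input. A more pedestrian fallback — verifying \eqref{eq:1storder-free} by hand via a $1/N$-expansion of mixed moments and a term-by-term cancellation argument analogous to the proof of Proposition \ref{pro:B-free-implies-inffree} — is available, but it is considerably more laborious, and I would resort to it only if the clean appeal to the Remark proved insufficient.
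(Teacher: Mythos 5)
Your proposal is correct and follows essentially the same route as the paper: both rest on Corollary \ref{cor:lawofXNis} (the $X_N(k)$ form a genuinely free semicircular family of variance $1+1/N$ for every $N$), deduce infinitesimal freeness from the fact that exact freeness holds at each finite $N$, and identify the one-variable restriction via the $1/N$-expansion of the semicircular moments. The only cosmetic difference is that you verify $\mu'(t_k^{2n})=nC_n=B_n$ by direct computation, whereas the paper obtains the same identification by citing Corollary \ref{cor29} (the semigroup derivative $\partial_t|_{t=1}\gamma_t$ for the stable semicircle law).
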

\begin{proof}
Indeed, the joint law of $\{X_{N}(k):k=1,2,\dots\}$ with respect
to $\psi_{N}$ is that of a free semcircular family of covariance
$(1+1/N)$. The statement of the corollary now follows using Corollary
\ref{cor29} applied to the case of the semicircular distribution and
from Proposition \ref{pro:B-free-implies-inffree}.
\end{proof}

As we noted in the introduction, it is necessary to start with a matrix $X_{N}$ with semicircular
entries; a real Gaussian random matrix will not work.

\section{Multiplicative free convolution of type $B$}

It has been shown in \cite{MVP} that one can define
a natural multiplicative analogue of the operation
$\boxplus_B$, which we will denote by $\boxtimes_B$.
Surprisingly, the main additive results have a formal multiplicative
analogue, with the notable exception of Theorem \ref{ans}. Unlike in the additive case,
it is far from clear what would be the narrowest appropriate class of analytic objects
for the second coordinate in the multiplicative case. Since
proofs are identical to the ones from the previous section, we will
provide only the statements of our results.

First we shall introduce several notations. For any probability
measure $\mu$, let 
$$\psi_\mu(z)=\int\frac{zt}{1-zt}\,d\mu(t),$$
be its {\em moment generating function.} If $\mu$ is supported on 
the unit circle $\mathbb T$ in the complex plane, then $\psi_\mu$
is defined and analytic inside the unit disc $\mathbb D$ and takes
values in the half-plane $\{z\colon\Re z\ge-1/2\}$. If $\mu$ is supported
on the positive half-line $[0,+\infty)$, then $\psi_\mu$ is an analytic
self-map of $\mathbb C\setminus[0,+\infty)$ which preserves the upper 
and lower half-planes and increases the argument: $\pi>\arg\psi_\mu(z)\ge
\arg z$ for $0<\arg z<\pi.$ In the following we shall denote by 
$\mathcal M_\mathbb T$ the set of Borel probability measures supported on the unit circle and by $\mathcal M_+$ the set of Borel probability 
measures supported on the positive half-line. 
Biane \cite{mathz} has shown that
a subordination result holds also for free multiplicative convolution:
\begin{theorem}\label{subordx}
Let $\mu_1,\mu_2$ be two Borel probability measures, and denote by
$\mu_3=\mu_1\boxtimes\mu_2$ their free multiplicative convolution.
\begin{enumerate}
\item If $\mu_1,\mu_2\in\mathcal M_\mathbb T$, then there exist
two analytic self-maps $\omega_1,\omega_2$ of the unit disc so that
\begin{trivlist}
\item[{\rm(a)}] $|\omega_j(z)|\leq|z|$, $z\in\mathbb D$, $j\in\{1,2\}$;
\item[{\rm (b)}] $\psi_{\mu_j}(\omega_j(z))=\psi_{\mu_1\boxtimes\mu_2}
(z)$, $z\in\mathbb D$, $j\in\{1,2\}.$
\end{trivlist}
\item If $\mu_1,\mu_2\in\mathcal M_+$, then there exist
two analytic self-maps $\omega_1,\omega_2$ of the slit complex plane
$\mathbb C\setminus[0,+\infty)$ so that
\begin{trivlist}
\item[{\rm(a)}] $\pi>\arg\omega_j(z)\ge\arg z$, $z\in\mathbb C^+$, $j\in\{1,2\}$;
\item[{\rm (b)}] $\psi_{\mu_j}(\omega_j(z))=\psi_{\mu_1\boxtimes\mu_2}
(z)$, $z\in\mathbb C\setminus[0,+\infty)$, $j\in\{1,2\}.$
\end{trivlist}
Moreover, in both cases the subordination functions satisfy the
following relation:
\begin{equation}\label{15}
\frac{z\psi_{\mu_1\boxtimes\mu_2}(z)}{1+\psi_{\mu_1\boxtimes\mu_2}(z)}
=\omega_1(z)\omega_2(z),
\end{equation}
for $z$ in the domain of $\psi_{\mu_1\boxtimes\mu_2}$.
\end{enumerate}
\end{theorem}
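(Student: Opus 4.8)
The plan is to follow the same philosophy as in the additive case (Theorem \ref{subord}): produce the subordination functions explicitly from the linearizing transform, then upgrade them to genuine self-maps of the relevant domain by an operatorial argument. Recall that free multiplicative convolution is linearized by the $S$-transform: writing $\chi_{\mu}=\psi_{\mu}^{-1}$ for the functional inverse of the moment generating function and $S_{\mu}(z)=\frac{1+z}{z}\chi_{\mu}(z)$, one has $S_{\mu_1\boxtimes\mu_2}=S_{\mu_1}S_{\mu_2}$. I would define the candidate subordination functions by
$$\omega_j(z):=\chi_{\mu_j}(\psi_{\mu_3}(z))=\psi_{\mu_j}^{-1}(\psi_{\mu_3}(z)),\qquad j\in\{1,2\},$$
so that property (b) holds tautologically, $\psi_{\mu_j}(\omega_j(z))=\psi_{\mu_3}(z)$.

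With this definition, relation \eqref{15} is purely formal. Indeed, multiplicativity of the $S$-transform rearranges to $\chi_{\mu_1}(w)\chi_{\mu_2}(w)=\frac{w}{1+w}\chi_{\mu_3}(w)$; evaluating at $w=\psi_{\mu_3}(z)$ and using $\chi_{\mu_3}(\psi_{\mu_3}(z))=z$ gives
$$\omega_1(z)\omega_2(z)=\frac{\psi_{\mu_3}(z)}{1+\psi_{\mu_3}(z)}\,z=\frac{z\psi_{\mu_3}(z)}{1+\psi_{\mu_3}(z)},$$
as required. Thus both (b) and the ``moreover'' part cost nothing once the $\omega_j$ are known to exist as honest analytic functions on the full domain.

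The analytic heart of the matter is therefore to show that $\omega_j$ extends to an analytic self-map of the whole disc $\mathbb D$ (resp.\ of the slit plane $\mathbb C\setminus[0,+\infty)$) satisfying the contraction property (a). For this I would pass to an operatorial model: in a tracial $W^*$-probability space take free $u,v$ with distributions $\mu_1,\mu_2$ (unitary in the circle case, positive in the half-line case), so that $uv$ (equivalently $\sqrt{u}\,v\sqrt{u}$ in the positive case) is distributed according to $\mu_3$. The key is to establish an operator-valued subordination identity of the form
$$E_{W^*(u)}\big[(1-z\,uv)^{-1}\big]=(1-\omega_1(z)\,u)^{-1},$$
where $E_{W^*(u)}$ is the trace-preserving conditional expectation onto the algebra generated by $u$. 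Applying $\tau$ to both sides recovers (b) (after noting $1+\psi_{\mu}(z)=\tau((1-zu)^{-1})$), while analyticity on the full domain and the bounds in (a) --- $|\omega_1(z)|\le|z|$ on $\mathbb D$, and $\arg z\le\arg\omega_1(z)<\pi$ on $\mathbb C^+$ --- follow from positivity and contractivity of $E_{W^*(u)}$ together with the mapping properties of $\psi_{\mu}$ recalled just before the theorem. Reversing the roles of $u$ and $v$ produces $\omega_2$.

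I expect the self-map estimate (a) to be the main obstacle. The $S$-transform formula for $\omega_j$ is valid only \emph{locally}, near the distinguished point ($0$ for the circle, the segment $(-1,0)$ for the half-line) on which $\psi_{\mu_j}$ is invertible; extending $\omega_j$ globally and proving the contraction bound is precisely what the operatorial identity (or, alternatively, a Denjoy--Wolff / normal-families fixed-point argument for the associated self-map of the domain) is needed for. One must also treat separately the degenerate cases --- for instance $\mu_j=\delta_0$, or circle measures with vanishing first moment --- where $\chi_{\mu_j}$ is not directly available; there the cleanest route is to approximate by non-degenerate measures and pass to the limit using weak continuity of $\boxtimes$ and of the relations (a), (b), \eqref{15}.
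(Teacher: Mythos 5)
The paper does not actually prove this theorem: it is quoted from Biane \cite{mathz}, so there is no internal argument to compare with. Your architecture is nevertheless the standard one, and the purely formal parts are right: once analytic $\omega_j$ satisfying (b) are known to exist on the whole domain, the formula $\omega_j=\chi_{\mu_j}\circ\psi_{\mu_3}$ is forced wherever $\chi_{\mu_j}$ makes sense, and relation \eqref{15} follows from rewriting $S_{\mu_3}=S_{\mu_1}S_{\mu_2}$ as $\chi_{\mu_1}(w)\chi_{\mu_2}(w)=\frac{w}{1+w}\chi_{\mu_3}(w)$ and substituting $w=\psi_{\mu_3}(z)$. Your operator-valued identity $E_{W^*(u)}\bigl[(1-zuv)^{-1}\bigr]=(1-\omega_1(z)u)^{-1}$ is also the correct statement to aim for (one can check it against the moment expansion to order two), and applying $\tau$ does recover (b).

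The genuine gap is that this identity, which carries the entire analytic content of the theorem, is only announced (``the key is to establish...''), never established. Proving that the conditional expectation of the resolvent of $uv$ onto $W^*(u)$ is again a resolvent of $u$ --- rather than an arbitrary $z$-dependent element of $W^*(u)$ --- is exactly Biane's theorem; it requires either his explicit computation of $E_{W^*(u)}[(uv)^n]$ via the freeness moment formulas, or a fixed-point/Denjoy--Wolff construction of $\omega_1$ of the kind you allude to in passing. Without one of these, global analyticity and property (a) are unsupported: the local formula $\chi_{\mu_j}\circ\psi_{\mu_3}$ gives no control because $\psi_{\mu_j}$ need not be injective on $\mathbb D$ (take $\mu_j$ the Haar measure on $\mathbb T$, where $\psi_{\mu_j}\equiv 0$), so $\chi_{\mu_j}$ has no global meaning. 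Two further points to handle if you carry the plan out: in the $\mathcal M_+$ case $uv$ is not self-adjoint, so the identity should be formulated for $\sqrt u\,v\sqrt u$, or you must verify that $1-zuv$ is invertible for $z\in\mathbb C\setminus[0,+\infty)$ and that $\tau((1-zuv)^{-1})=1+\psi_{\mu_3}(z)$ there by traciality; and the approximation argument for degenerate measures needs a normal-families extraction together with an argument ruling out constant (boundary) limits of the approximating subordination functions, since weak continuity of $\boxtimes$ alone does not produce the limiting $\omega_j$.
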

Using this result and the work of Popa \cite{MVP}, one can prove
the following results:
\begin{prop}\label{prop21}
Consider two type $B$ random variables $(a_1,\xi_1),(a_2,\xi_2)$ which
are $B$-free and distributed according to $(\mu_1,\nu_1)$ and 
$(\mu_2,\nu_2)$, respectively. Assume all these distributions are
compactly supported. Denote by $(\mu_3,\nu_3)$ the distribution of
$(a_1,\xi_1)(a_2,\xi_2)=(a_1a_2,a_1\xi_2+\xi_1a_2)$. Then, with the 
notations from Theorem \ref{subordx}, we have
\begin{enumerate}
\item[(a)] $\mu_3=\mu_1\boxtimes\nu_1$;
\item[(b)] $\displaystyle\frac{\psi_{\nu_3}(z)}{z}=
\frac{\psi_{\nu_1}(\omega_1(z))}{\omega_1(z)}
\omega_1'(z)+\frac{\psi_{\nu_2}(\omega_2(z))}{\omega_2(z)}\omega_2'(z).
$
\end{enumerate}
\end{prop}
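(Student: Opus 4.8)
The plan is to follow the exact template of Proposition \ref{prop11}, but now transplanted from the additive Cauchy-transform setting to the multiplicative $\psi$-transform setting. The starting point is to lift everything to the algebra $\mathcal C$: because type $B$ freeness is type $A$ freeness over $\mathcal C$ (Corollary \ref{functionalequation} and the remarks following it), the multiplicative subordination result of Theorem \ref{subordx} must hold verbatim for the $\mathcal C$-valued moment generating function $\Psi_{(a_j,\xi_j)}(Z)$. Thus I would first assert the existence of two $\mathcal C$-valued subordination maps $\Omega_1,\Omega_2$ satisfying $\Psi_{(a_j,\xi_j)}(\Omega_j(Z))=\Psi_{(a_1,\xi_1)(a_2,\xi_2)}(Z)$ together with the $\mathcal C$-analogue of the product relation \eqref{15}. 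Part (a) of the statement is then immediate from the first-coordinate analysis: the first coordinate of $\Psi$ is just the scalar $\psi_{\mu_j}$, so the first coordinate of $\Omega_j$ is forced to coincide with the scalar subordination function $\omega_j$ of Theorem \ref{subordx}, and $\mu_3=\mu_1\boxtimes\mu_2$.

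The substance is in the second coordinate, and here I would write $\Omega_j(z,w)=(\omega_j(z),o_j(z,w))$ and compute $o_j$ explicitly, exactly mirroring the derivation of \eqref{o1} in Proposition \ref{prop11}. Writing $\Psi_{(a,\xi)}(Z)=(\psi_\mu(z),\,w\psi_\mu'(z)+\psi_\nu(z))$ on components, the subordination equation $\Psi_{(a_j,\xi_j)}(\Omega_j(Z))=\Psi_{\mu_3\text{-pair}}(Z)$ gives, on the second coordinate,
\[
w\psi_{\mu_3}'(z)+\psi_{\nu_3}(z)=o_j(z,w)\psi_{\mu_j}'(\omega_j(z))+\psi_{\nu_j}(\omega_j(z)).
\]
The second coordinate of the product relation \eqref{15}, differentiated appropriately in the $\mathcal C$-sense, furnishes a second equation relating $o_1+o_2$ to the data. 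I would solve the $o_2$-equation for $o_2$, substitute into the sum relation to isolate $o_1$, amplify by $\omega_2'(z)$ and use the chain rule $\psi_{\mu_2}'(\omega_2(z))\omega_2'(z)=\psi_{\mu_3}'(z)$ coming from Theorem \ref{subordx}(b). This produces a closed formula for $o_1$ in terms of $w$, $\psi_{\nu_3}$, $\psi_{\nu_2}$ and the $\omega_j$, after which substituting back into the $j=1$ subordination equation and multiplying by $\omega_1'(z)$ collapses, exactly as in Proposition \ref{prop11}, to the claimed identity $\psi_{\nu_3}(z)/z=\psi_{\nu_1}(\omega_1(z))\omega_1'(z)/\omega_1(z)+\psi_{\nu_2}(\omega_2(z))\omega_2'(z)/\omega_2(z)$.

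The main obstacle is bookkeeping in the product relation rather than the additive sum relation: in the additive case \eqref{O} the constraint on $\Omega_1+\Omega_2$ is additive and its second coordinate is linear, whereas here \eqref{15} is genuinely multiplicative, so its $\mathcal C$-valued second coordinate involves the logarithmic-derivative structure $\Omega_1\Omega_2$ and the extra factors of $z$, $\omega_1(z)$, $\omega_2(z)$ in the denominators. Keeping the $z/\omega_j(z)$ factors aligned through the chain-rule substitutions is where the computation differs materially from the additive case and where the asymmetric weights in the final formula originate. The compact-support hypothesis guarantees, as in Proposition \ref{prop11}, that all the $\psi$-transforms are analytic near the appropriate base point (the origin for $\mathcal M_{\mathbb T}$, a slit-plane neighborhood for $\mathcal M_+$), so that the power-series manipulations underlying the $\mathcal C$-calculus are justified and the resulting $\nu_3$ is a well-defined object on the relevant domain; I would invoke Theorem \ref{subordx} in place of Theorem \ref{subord} throughout and otherwise leave these analytic verifications, as the authors do, as routine.
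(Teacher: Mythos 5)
Your proposal is correct and follows exactly the route the paper intends: the paper omits the proof of Proposition \ref{prop21} entirely, stating that it is identical to the additive argument of Proposition \ref{prop11} with Theorem \ref{subordx} in place of Theorem \ref{subord}, and your component-wise computation over $\mathcal C$ (subordination on the second coordinate, the $\mathcal C$-valued version of relation \eqref{15}, elimination of the $o_j$ via the chain rule, and division by $\omega_1(z)\omega_2(z)=z\psi_{\mu_3}(z)/(1+\psi_{\mu_3}(z))$) is precisely that argument and does yield formula (b). Your remark about where the multiplicative bookkeeping diverges from the additive case is accurate and is exactly the point the paper glosses over.
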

Observe that we have not specified the domain on which the
functions above are defined, or the analytic nature of $\nu_j$, at this
moment. While the domains of the functions involved are rather easy
to find (the unit disc for distributions supported on $\mathbb T$ and 
the slit complex plane for distributions on the positive half-line), unfortunately it is not clear what appropriate sets of distributions
are stable under $\boxtimes_B$. The following analogue of Theorem
\ref{inf} is the only exception we know. 

\begin{theorem}\label{infx}
Assume that the functions $\gamma_j\colon[0,1]\to\mathcal M_\epsilon$
($\epsilon\in\{\mathbb T,+\}$) are differentiable on $(0,1)$ and 
$\gamma_j'$ extend continuously to $[0,1]$, $j\in\{1,2\}$. Then
$(\gamma_1(t),\gamma_1'(t))\boxtimes_B(\gamma_2(t),\gamma_2'(t))=
(\gamma_1(t)\boxtimes\gamma_2(t),\frac{d}{dt}(\gamma_1(t)\boxtimes
\gamma_2(t))),$ for all $t\in[0,1]$.
\end{theorem}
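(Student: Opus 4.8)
The plan is to mirror the proof of Theorem \ref{inf}, replacing the additive subordination data from Theorem \ref{subord} by the multiplicative data of Theorem \ref{subordx}, and the additive type $B$ formula of Proposition \ref{prop11} by its multiplicative counterpart, Proposition \ref{prop21}. Write $\psi = \psi_{\gamma_1(t)\boxtimes\gamma_2(t)}$ and let $\omega_j^t$ be the multiplicative subordination functions attached to $\gamma_j(t)$ by Theorem \ref{subordx}, so that $\psi(z) = \psi_{\gamma_j(t)}(\omega_j^t(z))$ for $j\in\{1,2\}$. The goal is to show that the second coordinate produced by Proposition \ref{prop21}(b), when the second coordinates of the two arguments are $\gamma_1'(t)$ and $\gamma_2'(t)$, is exactly $\partial_t\psi$, i.e. the $\psi$-transform of $\frac{d}{dt}(\gamma_1(t)\boxtimes\gamma_2(t))$.

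First I would differentiate the two defining relations. Differentiating the subordination identity $\psi(z)=\psi_{\gamma_j(t)}(\omega_j^t(z))$ in $z$ gives $\psi'(z)=\psi'_{\gamma_j(t)}(\omega_j^t(z))\,(\omega_j^t)'(z)$, while differentiating it in $t$ gives $\partial_t\psi(z)=\psi_{\gamma_j'(t)}(\omega_j^t(z))+\psi'_{\gamma_j(t)}(\omega_j^t(z))\,\partial_t\omega_j^t(z)$; eliminating $\psi'_{\gamma_j(t)}(\omega_j^t(z))$ between the two yields $\partial_t\omega_j^t(z) = \frac{(\omega_j^t)'(z)}{\psi'(z)}\bigl[\partial_t\psi(z)-\psi_{\gamma_j'(t)}(\omega_j^t(z))\bigr]$ for each $j$. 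Next I would differentiate the product relation \eqref{15}, $\omega_1^t(z)\omega_2^t(z)=z\psi(z)/(1+\psi(z))$, in $t$, obtaining $\omega_2^t\,\partial_t\omega_1^t+\omega_1^t\,\partial_t\omega_2^t = z\,\partial_t\psi/(1+\psi)^2$. This is the multiplicative analogue of the role played by part (2) of Theorem \ref{subord} in the additive argument.

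Substituting the two expressions for $\partial_t\omega_j^t$ into the differentiated product relation and using that $\omega_2^t(\omega_1^t)'+\omega_1^t(\omega_2^t)'=\partial_z\bigl(z\psi/(1+\psi)\bigr)=\bigl(\psi(1+\psi)+z\psi'\bigr)/(1+\psi)^2$ (again by \eqref{15}), the terms proportional to $z\psi'\partial_t\psi$ cancel, leaving
$$\omega_2^t(z)\,(\omega_1^t)'(z)\,\psi_{\gamma_1'(t)}(\omega_1^t(z)) + \omega_1^t(z)\,(\omega_2^t)'(z)\,\psi_{\gamma_2'(t)}(\omega_2^t(z)) = \frac{\psi(z)\,\partial_t\psi(z)}{1+\psi(z)}.$$
Dividing by $\omega_1^t(z)\omega_2^t(z)$ and invoking \eqref{15} once more to replace $\psi/(1+\psi)$ by $\omega_1^t\omega_2^t/z$ produces exactly the formula of Proposition \ref{prop21}(b) with $\nu_j=\gamma_j'(t)$ and $\psi_{\nu_3}=\partial_t\psi=\psi_{\frac{d}{dt}(\gamma_1\boxtimes\gamma_2)}$. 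An application of Proposition \ref{prop21} then identifies the second coordinate of the $\boxtimes_B$-convolution with $\frac{d}{dt}(\gamma_1(t)\boxtimes\gamma_2(t))$, while the first coordinate is $\gamma_1(t)\boxtimes\gamma_2(t)$ by part (a), completing the argument.

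The main obstacle is not the algebra, which runs in close parallel to Theorem \ref{inf}, but the analytic justification that all these differentiations are legitimate and that the resulting object is an admissible second coordinate. Specifically, I would need: joint smoothness of $(t,z)\mapsto\psi_{\gamma_j(t)}(z)$ and differentiability of $t\mapsto\omega_j^t$ (which should follow from the continuity and differentiability hypotheses on $\gamma_j$ together with the implicit description of $\omega_j^t$ through Theorem \ref{subordx}); the interchange of $\partial_t$ with the defining integrals, guaranteed by the hypothesis that $\gamma_j'$ exists as a bounded-variation limit and extends continuously to $[0,1]$; and, unlike the additive case where Theorem \ref{thm17} supplies a clean stable space $\mathcal M_2$, a direct verification that $\frac{d}{dt}(\gamma_1(t)\boxtimes\gamma_2(t))$ is again the derivative of a path in $\mathcal M_\epsilon$, so that it lies in an admissible second-coordinate space. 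This last point is where the multiplicative theory is genuinely less clean, and is presumably why the theorem is phrased for differentiable paths rather than for an abstractly characterized stable class of distributions.
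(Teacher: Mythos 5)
Your argument is correct and follows essentially the same route as the paper's proof: differentiate the subordination identity $\psi_{\gamma_1(t)\boxtimes\gamma_2(t)}=\psi_{\gamma_j(t)}\circ\omega_j^t$ in $t$ and in $z$, eliminate $\partial_t\omega_j^t$ using the differentiated product relation \eqref{15}, cancel the $z\psi'\partial_t\psi/(1+\psi)^2$ terms, and divide by $\omega_1^t\omega_2^t$ to land on the formula of Proposition \ref{prop21}(b) with $\psi_{\nu_3}=\partial_t\psi$. Your closing remarks on the analytic justifications and on the absence of a clean stable second-coordinate class in the multiplicative setting go somewhat beyond what the paper records, but they do not change the substance of the argument.
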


\begin{proof}
The proof is similar to the one of Theorem
\ref{inf}. We derivate in the subordination formula:
$$\partial_t\psi_{\gamma_1(t)\boxtimes\gamma_2(t)}(z)=
\psi_{\gamma_j'(t)}(\omega_j^t(z))+\psi_{\gamma_j(t)}'(\omega_j^t
(z))\partial_t\omega_j^t(z),$$
for $j\in\{1,2\}$, with $\omega_j^t$ denoting the subordination
function provided by Theorem \ref{subordx}.
From \eqref{15}, together with the formula above, we get

$$
\frac{z\partial_t\psi_{\gamma_1(t)\boxtimes\gamma_2(t)}(z)}{(1+
\psi_{\gamma_1(t)\boxtimes\gamma_2(t)}(z) )^2}=
\frac{\partial_t\psi_{\gamma_1(t)\boxtimes\gamma_2(t)}(z)-\psi_{\gamma_1'(t)}
(\omega_1^t(z))}{\psi_{\gamma_1(t)}'(\omega_1^t(z))}\omega_2^t(z)+
\frac{\partial_t\psi_{\gamma_1(t)\boxtimes\gamma_2(t)}(z)-\psi_{\gamma_2'(t)}
(\omega_2^t(z))}{\psi_{\gamma_2(t)}'(\omega_2^t(z))}\omega_1^t(z).$$
Recalling that $\psi_{\gamma_1(t)\boxtimes\gamma_2(t)}'(z)=
\psi_{\gamma_j(t)}'(\omega_j^t(z))(\omega_j^t)'(z)$, we may amplify 
the two fractions in the right hand term of the equality above by 
$(\omega_1^t)'(z)$ and $(\omega_2^t)'(z)$, respectively, and multiply
by $\psi_{\gamma_1(t)\boxtimes\gamma_2(t)}'(z)$ in order to get, after
regrouping, 
$$z\psi_{\gamma_1(t)\boxtimes\gamma_2(t)}(z)
\left(\frac{\psi_{\gamma_1(t)\boxtimes\gamma_2(t)}(z)}{1+
\psi_{\gamma_1(t)\boxtimes\gamma_2(t)}(z)}\right)'=
\psi_{\gamma_1(t)\boxtimes\gamma_2(t)}(z)[\omega_1^t(z)(\omega_2^t)'(z)
+(\omega_1^t)'(z)\omega_2^t(z)]
$$
$$
-\psi_{\gamma_1'(t)}(\omega_1^t(z))(\omega_1^t)'(z)\omega_2^t(z)
-\psi_{\gamma_2'(t)}(\omega_2^t(z))(\omega_2^t)'(z)\omega_1^t(z)
.$$
Equation \eqref{15} implies that
$z\left(\frac{\psi_{\gamma_1(t)\boxtimes\gamma_2(t)}(z)}{1+
\psi_{\gamma_1(t)\boxtimes\gamma_2(t)}(z)}\right)'-
\omega_1^t(z)(\omega_2^t)'(z)
-(\omega_1^t)'(z)\omega_2^t(z)=-\frac{\psi_{\gamma_1(t)\boxtimes\gamma_2(t)}(z)}{1+
\psi_{\gamma_1(t)\boxtimes\gamma_2(t)}(z)}.$
Using this relation, mutliplying the equation above by $[\omega_1^t(z)
\omega_2^t(z)]^{-1}$ and applying again \eqref{15} provides
$$\frac{\partial_t\psi_{\gamma_1(t)\boxtimes\gamma_2(t)}(z)}{z}=
\frac{\psi_{\gamma_1'(t)}(\omega_1^t(z))}{\omega_1^t(z)}(\omega_1^t)'(z) +
\frac{\psi_{\gamma_2'(t)}(\omega_2^t(z))}{\omega_2^t(z)}(\omega_2^t)'(z).
$$
The above equality holds for $z\in\mathbb D$ if $\epsilon=\mathbb T$
and for $z\in\mathbb C\setminus[0,+\infty)$ if $\epsilon=+$.
The theorem follows now from Proposition \ref{prop21}.
\end{proof}


\end{document}